\documentclass[11pt]{amsart}

\usepackage[a4paper]{geometry}
\geometry{left=3cm,right=3cm,top=2.5cm,bottom=5cm} 

\usepackage{times}
\usepackage[italian,english]{babel}
\usepackage[latin1]{inputenc}
\usepackage[pdftex]{color, graphicx}
\usepackage{fancyhdr}
\usepackage{mathtools}

%

\usepackage[autostyle, italian=guillemets]{csquotes}

\usepackage{amsmath,amsthm}
\usepackage{amssymb,amsfonts}
\usepackage{mathrsfs}

\usepackage{tikz}
\usepackage{graphicx}
\usepackage{stmaryrd}
\usepackage{enumerate}
\usepackage{xypic}

\usepackage{microtype}
\usepackage{hyperref}
\usepackage{amscd}

\usepackage[mode=multiuser,status=draft,lang=english]{fixme}
\fxsetup{theme=colorsig}

\FXRegisterAuthor{M}{aM}{MV}
\FXRegisterAuthor{P}{aMP}{MP}

\newcommand{\nphi}{\varphi}

\newcommand{\Mod}{\textrm{Mod}}

\newcommand{\C}{\mathcal{C}}
\newcommand{\LL}{\mathcal{L}}
\newcommand{\down}{\textrm{DOWN}}

\newcommand{\F}{\mathcal{F}}
\newcommand{\G}{\mathcal{G}}

\newcommand{\Id}{\textrm{Id}}
\newcommand{\Hom}{\textrm{Hom}}

\newcommand{\malg}{\bool{MALG}}

\newcommand{\Presh}{\textrm{Presh}}
\newcommand{\SPresh}{\textrm{S-Presh}}
\newcommand{\exPresh}{\textrm{ExPresh}}
\newcommand{\exSh}{\textrm{ExSh}}
\newcommand{\Sh}{\textrm{Sh}}

\newcommand{\modx}{\Mod^{\textrm{bool}}}
\newcommand{\modxx}{\Mod^{\textrm{cba}}}

\newcommand{\exmodxx}{\textrm{ExMod}^{\textrm{cba}}}

\newcommand{\bundle}{\textrm{Bundle}}

\newcommand{\exbundle}{\textrm{ExBundle}}

\newcommand{\bundleopen}{\textrm{Bundle}_{\textrm{open}}}
\newcommand{\stexetale}{\textrm{St-ExEtale}}
\newcommand{\exetale}{\textrm{ExEtale}}

\newcommand{\etale}{\textrm{Etale}}

\title{Boolean valued models, presheaves, and \'etal\'e spaces}

\author{Moreno Pierobon, Matteo Viale}
\thanks{Both the authors acknowledge support from INDAM through GNSAGA. The second author acknowledges support from the project:
\emph{PRIN 2017-2017NWTM8R
Mathematical Logic: models, sets, computability.}
\textbf{MSC:} \emph{03E40,18A40, 18F20, 03C90, 03C20}. 
}


\theoremstyle{plain}

	\newtheorem{theorem}{Theorem}[section]
	\newtheorem{proposition}[theorem]{Proposition}
	\newtheorem{lemma}[theorem]{Lemma}
	\newtheorem{corollary}[theorem]{Corollary}
	\newtheorem{fact}[theorem]{Fact}

\theoremstyle{definition}
	\newtheorem{definition}[theorem]{Definition}
	\newtheorem{notation}[theorem]{Notation}
	\newtheorem{example}[theorem]{Example}
	\newtheorem{notation*}{Notation}

\theoremstyle{plain}

\theoremstyle{remark}
	\newtheorem{remark}[theorem]{Remark}
	

\newcommand{\ZFC}{\ensuremath{\mathsf{ZFC}}}

\newcommand{\N}{\mathcal{N}}
\newcommand{\M}{\mathcal{M}}

\newcommand{\dSet}{\mathsf{Set}}

\DeclareMathOperator{\dom}{dom}

\DeclareMathOperator{\ran}{ran}

\DeclareMathOperator{\St}{St}

\DeclareMathOperator{\RO}{\mathsf{RO}}
\DeclareMathOperator{\CLOP}{\mathsf{CLOP}}


\newcommand{\bool}[1]{\mathsf{#1}}



\newcommand{\pow}[1]{\mathcal{P}\left(#1\right)}
\newcommand{\Int}[1]{\mathrm{Int}\left(#1\right)}
\newcommand{\Cl}[1]{\mathrm{Cl}\left(#1\right)}
\newcommand{\Reg}[1]{\mathrm{Reg}\left(#1\right)}

\newcommand{\cp}[1]{\left( #1 \right)}
\newcommand{\qp}[1]{\left[ #1 \right]}
\newcommand{\Qp}[1]{\left\llbracket #1 \right\rrbracket}

\newcommand{\bp}[1]{\left\lbrace #1 \right\rbrace}

\newcommand{\lrao} {\leftrightarrow}




\theoremstyle{definition}


\newcommand{\UnderTilde}[1]{{\setbox1=\hbox{$#1$}\baselineskip=0pt\vtop{\hbox{$#1$}\hbox to\wd1{\hfil$\sim$\hfil}}}{}}






%
%



\begin{document}

\begin{abstract}
Boolean valued models for a signature $\mathcal{L}$ are generalizations of $\mathcal{L}$-structures in which we allow the
$\mathcal{L}$-relation symbols to be interpreted by Boolean truth values. For example, for elements $a,b\in\mathcal{M}$ with
$\mathcal{M}$ a $\bool{B}$-valued $\mathcal{L}$-structure for some Boolean algebra $\bool{B}$, 
$(a=b)$ may be neither true nor false, but get 
an intermediate  truth value in $\bool{B}$. 
In this paper we expand and relate the work of Mansfield and others on the semantics of Boolean valued models, 
and of Scott, Fourman, Monro, and others on the adjunctions between $\bool{B}$-valued models and presheaves on the Boolean algebra 
$\bool{B}$.
We start giving a different proof of a result by Monro identifying presheaves on (complete) Boolean algebras with Boolean valued models, and sheaves with Boolean valued models having the \emph{mixing property}. We also give an exact topological 
characterization (the so called \emph{fullness property}) of which Boolean valued models satisfy 
\L o\'s Theorem (i.e. the general form of the Forcing Theorem
which Cohen ---Scott, Solovay, Vopenka--- established for the special case given by the forcing method in set theory).
Next we separate the fullness property from the mixing property, by showing that the latter is strictly stronger.
Then we give an exact categorical characterization of which 
presheaves correspond to full Boolean valued models in terms of the structure of global sections of their associated \'etal\'e space.
Finally we introduce a topological presentation via \'etal\'e bundles of the sheafification process (with respect to the supremum Grothendieck topology) for presheaves on a complete Boolean algebra, linking these topological/category theoretic results to the theory of Boolean valued models.

\end{abstract}

\maketitle

\section*{Summary of main results}
This paper analyzes the correspondences existing between Boolean valued models and presheaves on Boolean algebras. Our aim is to lay foundations for bridging the gap between set theorists and category theorists, and to explore further the deep connections of the Boolean valued approach to forcing 
with sheaf theory.
We start giving a brief summary of the main results (in some cases we are forced to be inaccurate or cryptic as the precise formulation of many of our theorems requires a terminology that cannot be given right away):

\begin{enumerate}
\item \label{equiv:boo}
We show that there exists an adjunction 
between the category of Boolean valued models (see Definitions \ref{def:boolvalmod}, \ref{def:boolvalmorph})
and the category whose objects are
presheaves on Boolean algebras and whose arrows are given by adjoint homomorphisms (Theorem \ref{thm:adjbvmodpresh}); moreover this adjunction specializes
to an equivalence of categories between the family of \emph{extensional} Boolean valued models (Definition
\ref{def:boolvalmod}) and the \emph{separated} presheaves 
in which all local sections are restrictions of global sections (Corollary \ref{thm:adjbvmodpresh}); note that (up to Boolean 
equivalence) every
 Boolean valued model is extensional.  In this correspondence, models with the mixing 
property (Definition \ref{def:mixprop})
are sheaves (with respect to the supremum Grothendieck topology), and conversely (Proposition \ref{chr:mix}). 
Our result is the special case for Boolean algebras of more general results by Fourman, Scott \cite{FOURSCOTT}, Monro \cite{MONRO86, MONROBIS}, and others, which prove the existence of an equivalence of categories between Heyting valued models and presheaves defined on Heyting algebras. Our presentation differs from the the ones findable in the well established literature on the topic for two main reason. First, our definition of Boolean valued model is different from the one natural arising in the study of presheaves on Heyting/Boolean algebras, since it requires the reflexivity of the equality (Remark \ref{rem:transeq}). This choice is grounded in the use of Boolean valued models in set theory (in particular in the forcing method), but it does not imply a plain equivalence with presheaves on a Boolean algebra, thus requiring a careful description of the categories involved in the correspondence. The second reason is that our direct proof in terms of Boolean valued models has the advantage of being accessible to
set theorists as well as to category theorists. 

\item \label{diff:sheaf}
We introduce a topological construction describing the sheafification process for presheaves on a complete Boolean algebra $\bool{B}$.
More precisely, 
we associate to every presheaf $\F$ on $\bool{B}$ a bundle $\Lambda_\F$ over the Stone space $\St(\bool{B})$ (Definition \ref{def:etbund1}, Lemma \ref{lem:JBundef}). We first show that the separated quotient of $\F$ is represented by a presheaf of certain local sections on $\Lambda_\F$ (Propositions \ref{prop:univJsep} and \ref{lem:Jsepbindles1}), then we prove that the sheafification of $\F$ can be represented as the sheaf given by suitably defined Stone sections of the bundle $\Lambda_\F$ (Theorems \ref{mainthm:sheaf} and \ref{2mainthm:sheaf}). 

\item 
We relate the notion of open continuous mapping between topological spaces, to that of complete homomorphism between complete Boolean algebras, and to that of 
adjoint homomorphism between Boolean algebras (see 
Theorem \ref{thm:adj-dual-cbaTDCH} and Propositions \ref{prop:openadjpairs} \ref{thm:openmap-->adjoint} in Section \ref{sec:dual}). 
We also connect our constructions to 
standard completion processes performed in functional analysis/general topology/set theory (Examples \ref{ex:sheaf}, \ref{exm:sheafLR}).

\item \label{full:mix} We investigate the basic features of Boolean valued models and their semantics, separating the mixing property and the fullness property (Definition \ref{def:fullness-wellbehaved}); the latter
identifies those $\bool{B}$-valued models for which \L o\'s Theorem ---equivalently  the Forcing Theorem--- holds. Specifically we show that every model with the mixing property is full (Proposition \ref{ful:mod}), that this implication is strict 
(Example \ref{non:mix}), 
and that not all 
Boolean valued models are full.
We also find a simple topological property characterizing the fullness property (Theorem \ref{thm:charLosThm}).

\item \label{etal:sp}
With the aim of clarifying further the distinction between fullness and mixing property,
we characterize the fullness property elaborating more on our correspondence between $\bool{B}$-valued models and presheaves, 
and taking into account the notion of \'etal\'e space (Theorem \ref{chr:ful}).

\end{enumerate}


\section*{Forcing, Boolean valued models, and sheaves}
Boolean valued models constitute one of the classical approaches to forcing \cite{BELL,JECH}. 
They also give a useful method for producing new first order models of a theory; for example the standard ultrapower construction is 
just a special instance of a more general procedure to construct Boolean valued elementary extensions of a given structure 
(see \cite{VIAUSAX} or \cite{MANS71}). 
Many interesting spaces of functions (for example $L^\infty(\mathbb{R}), C(X)$ etc.) can be naturally endowed of 
the structure of a $\bool{B}$-valued model; for example set $\Qp{f=g}=\qp{\bp{x\in \mathbb{R}: f(x)=g(x)}}_{\bool{MALG}}$ 
for $f,g\in L^\infty(\mathbb{R})$, where $\bool{MALG}$ is the complete Boolean algebra given by 
Lebesgue measurable sets modulo Lebesgue null sets; $\Qp{f=g}$ gives a natural measure of the degree of equality 
subsisting between $f$ and $g$. In particular by means of Boolean valued models one can expect to employ logical methods to analyze 
the properties of these function spaces. This can be fruitful, see for example \cite{VIASCHANUEL} or 
\cite{MR779056,MR759416,VIAVAC15}.\\
The aim of this paper is to systematize these results and connect them to sheaf theory. Indeed, the description of the forcing method in terms of Boolean valued models provides a framework for investigating the forcing machinery within category-theoretic semantics for logics, starting from the seminal work \cite{LAWTIE} by Lawvere and Tierney relating forcing to Grothendieck toposes. The strict correlation between Heyting valued semantics and sheaves for Grothendieck topologies is well established in the literature: see in particular Fourman--Scott's work \cite{FOURSCOTT} and references therein. It turns out that sheaf theory provides an approach to the forcing method equivalent to the one with Heyting valued models (an application can be found in \cite{Four}). The correspondence between Boolean valued models and certain types of presheaves is a corollary of that general theory, and was deeply investigated by Monro \cite{MONROBIS, MONRO86}. It has its own relevance due to the role Boolean valued models have in set theory, but also because the difference between Heyting valued semantics and Boolean valued semantics is exactly the diffeence between intuitionistic logic and classical logic. In the works mentioned above, the notion of Boolean valued models which naturally arise from the study of presheaves on Boolean algebras lacks one property which is always required when defining Boolean valued models for set theory, that is: for every element $\tau$ in the model, $\Qp{\tau=\tau}=1$. Adding this condition in the definition of Boolean valued models changes the family of presheaves to which they correspond. The first step of our work is then providing a direct proof in terms of Boolean valued models of the Boolean case of Forman--Scott's result, as formulated by Monro\footnote{We note that we became aware of Monro's results only after preparing the first version of this paper. We thank Greta Coraglia for bringing Monro's work to our attention.} in \cite[Proposition 2.3]{MONROBIS} (contribution \ref{equiv:boo} above), adapting it to the notion of Boolean valued models of interest in set theory. Then, we expand and develop this result (contributions \ref{diff:sheaf}, \ref{full:mix}, \ref{etal:sp}  above). Our long-term hope is that this is a first step towards a fruitful transfer of techniques, ideas and results arising in set theory in the analysis of the forcing method to other domains where a sheaf-theoretic approach to problems is useful.\\
This paper (while written by scholars whose mathematical background is rooted in set theory) is aimed primarily at readers with some expertise in category theory and familiarity with first order logic. We emphasize that
\emph{no knowledge or familiarity with the forcing method as presented in \cite{BELL,JECH,KUNEN} is required} to follow the proofs and statements of the main results. 
We will sporadically relate our results to the forcing machinery through some examples (see in particular Examples
\ref{ex:fullnotimplmix}, \ref{ex:fullnotimplmix2}, \ref{exm:sheafLR}) and a few comments, 
but that's all. The reader unfamiliar with the forcing method can safely skip them without compromising the comprehension of the main body of this article. \\
Let us spell out more precisely the outcomes of \cite{MR779056,MR759416,VIAVAC15,VIASCHANUEL} as it is related to what we will do here.
\cite{MR779056,MR759416,VIAVAC15} show 
that there is a ``natural'' identification between:
\begin{itemize}
\item the unit ball of commutative real $C^*$-algebras of the form $C(X)$ with 
$X$ compact Hausdorff extremally disconnected;
\item 
the $\RO(X)$-names for the unit interval
$[0,1]$ as computed in the forcing extension $V^{\RO(X)}$ of $V$ given by $\RO(X)$ (where $\RO(X)$ denotes the complete Boolean algebra of regular open subsets of $X$). 
\end{itemize}
In 
\cite{VIASCHANUEL} these results are applied to establish a weak form of Schanuel's conjecture on the transcendence 
property of the exponential function over the complex numbers.
One of the outcomes of the present paper combined with \cite{VIAVAC15}
for example is that the sheafification (according to the supremum Grothendieck topology on the positive elements of $\bool{MALG}$) of the presheaf given by the essentially bounded measurable functions defined on some
measurable subset of $[0,1]$ is exactly given by the sheaf associated to the Boolean valued model describing the real 
numbers in $[0,1]$ which exist in $V^{\bool{MALG}}$ (see Example \ref{exm:sheafLR}).
These results are just samples of the various roles Boolean valued models can play across different mathematical fields, 
and why we strongly believe it can be useful to develop a flexible translation tool to relate concepts expressed in rather 
different terminology in different set-ups. This is more or less all we will say on the relation between the results of the present paper and forcing.

\section*{Structure of the paper}
The paper is organized as follows:
\begin{itemize}
\item Sections \ref{sec:prereqbatopsppreord} and \ref{sec:grothtoppreshetspbun} give prerequisites on topology, Boolean algebras, partial orders, Grothendieck topologies, (pre)sheaves, (\'etal\'e) bundles. 

\item Section \ref{sec:dual} explores the connections/dualities between the notion of open continuous map, that of complete homomorphism between complete Boolean algebras, and that of adjoint homomorphism between Boolean algebras.

\item Section \ref{sec:boolvalmod} gives a rapid presentation of the key properties of Boolean valued models.

\item Section \ref{sec:pres} establishes a functorial correspondence between Boolean valued models and presheaves on complete Boolean algebras, which specializes to a duality between Boolean valued models with the mixing property and sheaves for the supremum (equivalently, dense) Grothendieck topology.

It also gives a categorical characterization of the fullness property for Boolean valued models in terms of properties of the bundles associated to their presheaf structure.

\item Section \ref{sec:sheafposet} presents a topological construction via \'etal\'e bundles of the sheafification operator with respect to the supremum Grothendieck topology for presheaves on a complete Boolean algebra.
\end{itemize}

\tableofcontents

\section{Preliminaries on Boolean algebras, partial orders, topological spaces, compactifications}\label{sec:prereqbatopsppreord}

In this section we gather a number of folklore results on Boolean algebras, partial orders, topological space, sand compactifications, which we will use freely in the remainder of the paper. We include proofs of those non-trivial results for which we are not able to trace a reference in the literature. For non-trivial results whose proof is omitted we give appropriate references.

\subsection{Basics on Boolean algebras, partial orders, topological spaces}
We recall some basic facts and terminology about Boolean algebras, Heyting algebras, partial orders, compact spaces. The missing proofs can be found in \cite{JECH, JohnStone, viale-bookonforcing}.

Given a topological space $(X,\tau)$
and $A$ subset of $X$, $\Reg{A}$ is the interior of the closure of $A$ in $(X,\tau)$. 
$A$ is \emph{regular open} if $A=\Int{\Cl{A}}=\Reg{A}$.  
$\RO(X,\tau)$ (in short $\RO(X)$ if $\tau$ is clear from the context) denotes the family of regular open subsets of $(X,\tau)$.
$\CLOP(X,\tau)$ (or just $\CLOP(X)$ if $\tau$ is clear from the context) denotes the family of clopen subsets of $(X,\tau)$.
Clearly $\CLOP(X)\subseteq\RO(X)$.

By the Stone Representation Theorem (see for instance \cite[Theorem 3.8.2]{viale-bookonforcing} or \cite[Theorem 7.11]{JECH}), every Boolean algebra $\bool{B}$ is isomorphic to the Boolean algebra 
$\CLOP(\St(\bool{B}))$ of the clopen subsets of its \emph{Stone space}
$\St(\bool{B})$.
The latter is the compact Hausdorff $0$-dimensional space
whose points are the ultrafilters on $\bool{B}$, topologized by the topology $\tau_\bool{B}$ obtained by taking as a base the family of sets of the form
$N_b:=\bp{G\in\St(\bool{B}): b\in G}$ for some $b\in\bool{B}$. These sets are actually the clopen sets of $\St(\bool{B})$.
\begin{notation}\label{not:stonespaces}
For the remainder of this paper, when dealing with Stone spaces $X$ (i.e. $0$-dimensional compact Hausdorff spaces) it is convenient to look at points of $X$ at times as elements of their clopen neighborhoods and at times as ultrafilters on $\CLOP(X)$ as the Stone duality naturally identifies each point of $X$ to the ultrafilter on $\CLOP(X)$ given by its clopen neighborhoods. Thus we will write $G\in U$ for $U\in\CLOP(X)$ and $G\in \St(\CLOP(X))$ if we see $G$ as a point in $X$, and $U\in G$ if we see $G$ as an ultrafilter on $\CLOP(X)$.
\end{notation}

Given a topological space $(X,\tau)$,
$\RO(X)$ is endowed with the structure of a complete Boolean algebra by letting
\begin{itemize}
\item
$0_{\RO(X)}:=\emptyset$, $1_{\RO(X)}:=X$,
\item 
 for $U, V\in\RO(X)$
\begin{gather*}
U\vee V:=\Reg{U\cup V}, \\
U\wedge V:=U\cap V, \\
\neg U:=X\setminus\Cl{U}.
\end{gather*}
\item
for any family $\bp{U_i: i\in I}\subseteq\RO(X)$
\begin{gather*}
\bigvee_{i\in I}U_i:=\Reg{\bigcup_{i\in I}U_i}, \\
\bigwedge_{i\in I}U_i:=\Reg{\bigcap_{i\in I}U_i}.
\end{gather*}
\end{itemize}
\begin{fact}
\label{comp:bool}
A Boolean algebra $\bool{B}$ is complete if and only if $\CLOP(\St(\bool{B}))=\RO(\St(\bool{B}))$ (i.e. if and only if the space $\St(\bool{B})$ is extremally disconnected).
\end{fact}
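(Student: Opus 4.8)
The plan is to prove the two implications separately, using the Stone Representation Theorem to identify $\bool{B}$ with $\CLOP(\St(\bool{B}))$, so that completeness of $\bool{B}$ becomes completeness of the clopen algebra of the Stone space $X = \St(\bool{B})$. The key point throughout is that in a boolean algebra arising as $\CLOP(X)$ for a zero-dimensional compact Hausdorff $X$, the \emph{concrete} union/intersection of clopen sets need not be clopen, so the supremum computed in $\CLOP(X)$, when it exists, is $\Reg{\bigcup_i U_i}$ (the regular-open closure), because $\RO(X)$ always \emph{is} a complete boolean algebra in which $\CLOP(X)$ sits as a subalgebra, and suprema in a subalgebra, when they exist, can be computed by checking that $\Reg{\bigcup_i U_i}$ happens to be clopen.

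First I would prove: if $X$ is extremally disconnected then $\CLOP(X) = \RO(X)$, and hence $\CLOP(X)$, being equal to the complete boolean algebra $\RO(X)$, is complete. For the inclusion $\RO(X) \subseteq \CLOP(X)$: let $A \in \RO(X)$, so $A = \Int{\Cl{A}}$; since $A$ is open, $\Cl{A}$ is a closed set whose interior, by extremal disconnectedness, is clopen — and that interior is exactly $A$. The reverse inclusion $\CLOP(X) \subseteq \RO(X)$ holds in any space (a clopen set equals the interior of its own closure, namely itself). So $\CLOP(X) = \RO(X)$, which is complete by the discussion preceding the Fact.

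Conversely, I would prove: if $\CLOP(X)$ is complete then $X$ is extremally disconnected, i.e. $\Cl{U}$ is open for every open $U \subseteq X$. Fix an open $U$. Since $X$ is zero-dimensional, $U = \bigcup\{ V \in \CLOP(X) : V \subseteq U \}$; write $\mathcal{V}$ for this family of clopen subsets. By completeness of $\CLOP(X)$, the supremum $W := \bigvee \mathcal{V}$ exists in $\CLOP(X)$, so $W$ is clopen. I would then argue $W = \Cl{U}$: since each $V \in \mathcal{V}$ is contained in the closed set $W$, we get $U = \bigcup\mathcal{V} \subseteq W$, hence $\Cl{U} \subseteq W$ as $W$ is closed; for the reverse, if $W \setminus \Cl{U} \neq \emptyset$ then by zero-dimensionality there is a nonempty clopen $V_0 \subseteq W \setminus \Cl{U} \subseteq W \setminus U$, and then $W \setminus V_0$ is a clopen set that still contains every member of $\mathcal{V}$ (each is a subset of $U$, hence disjoint from $V_0$) but is strictly below $W$, contradicting that $W$ is the \emph{least} clopen upper bound of $\mathcal{V}$. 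Therefore $\Cl{U} = W$ is clopen, and $X$ is extremally disconnected.

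The main obstacle — really the only subtlety — is making precise that the supremum of $\mathcal{V}$ taken in the abstract complete boolean algebra $\CLOP(X)$ coincides with the topological closure $\Cl{U}$; this is where zero-dimensionality is used essentially (to manufacture the clopen witness $V_0$ and to write $U$ as a union of clopens in the first place), and one should be slightly careful that "least upper bound in $\CLOP(X)$" is a statement about clopen sets only, not about arbitrary subsets. Once that identification is in hand, both directions are short. I would also remark that this shows $X$ extremally disconnected $\iff$ every regular open set is clopen $\iff$ the closure of every open set is open, the equivalences being the content of the argument above, and that $\RO(\St(\bool{B}))$ is, by the construction recalled just before the Fact, always a complete boolean algebra into which $\CLOP(\St(\bool{B}))$ embeds as a (in general proper) subalgebra.
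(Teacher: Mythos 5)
Your proof is correct, and it is the standard argument: the paper itself does not prove Fact \ref{comp:bool} but cites \cite[Prop.\ 3.2.12]{viale-notesonforcing}, whose proof proceeds along essentially the same lines. In particular you handle the one genuine subtlety correctly --- that the supremum of a family of clopen sets in $\CLOP(\St(\bool{B}))$ need not be its union, and that zero-dimensionality is what lets you identify the least clopen upper bound with the closure of the union via the clopen witness $V_0$.
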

See \cite[Prop. 4.1.5]{viale-bookonforcing} for a proof.
\begin{remark}
\label{rem:supisdense}
Let $\bool{B}$ be a Boolean algebra. If $A\subseteq\bool{B}$, let $\bp{N_a: a\in A}$ be the family of basic open sets in the Stone space $\St(\bool{B})$ corresponding to $A$. Assume $\bigvee A$ exists. Then $\bigcup_{a\in A}N_a$ is a dense open set in a clopen set $N_b$ if and only if $b=\bigvee_{\bool{B}} A$. Observe that, if $A$ is infinite, in general it can be that $\bigcup_{a\in A}N_a\subsetneq N_{\bigvee A}$.
In case $\bool{B}$ is complete, 
\[\Reg{\bigcup_{a\in A}N_a}=N_{\bigvee A}.\]
\end{remark}

Finally recall that a frame is a bounded distributive lattice which is upward complete and satisfies the distributive law $q\wedge\bigvee_{i\in I}p_i=\bigwedge_{i\in I}(q \wedge p_i)$
(see \cite[Chapter IX]{MacLMoer} for details). Examples of frames of interest to us are given by the topology $\tau$ of a topological space $(X,\tau)$ and by complete boolean algebras. Given a frame 
$(P,\wedge,\vee,0_P,1_P)$ we denote by $P^+$ the partial order $P\setminus \bp{0_P}$.

Given a pre-order\footnote{A pre-order $\leq$ on $P$ is a reflexive and transitive binary relation; a pre-order $\leq$ is a partial order if it is also antisymmetric.} $(P, \leq)$, we endow it with the downward topology $\tau_P$:
given $X\subseteq P$,
\[
\downarrow X:=\{p\in P: \textit{ there exists } x\in X \textit{ such that } p\leq x\}.
\] 
$X\subseteq P$ is a \emph{down-set} if $X=\downarrow X$. \\
The family $\down(P)$ of the down-sets of $P$ is a topology for $P$, the \emph{downward topology} $\tau_P$.
\begin{itemize}
\item
A \emph{filter} $F$ on a preorder $P$ is a downward directed and upward closed non-empty subset of $P$.
\item
A \emph{prefilter} is a subset $Q$ of $P$ such that every finite family $q_1, \dots, q_n$ of elements in $Q$ has a \emph{refinement}, i.e. a $p\in P$ such that $p\leq q_i$ for $i=1,\dots n$.
\item $X\subseteq P$ is \emph{dense below $p$} if for all $q\leq p$ there is $r\in X$ refining $q$.
\item
$X\subseteq P$ is\emph{ predense below some $p\in P$} if $\downarrow X$ is dense below $p$.
\item
$X\subseteq P$ is \emph{(pre)dense} if it is (pre)dense below all $p\in P$.
\end{itemize}
\begin{definition}
\label{def:emb}
 A \emph{morphism of pre-orders} $f:P\to Q$ is an order preserving map, i.e. if $p_1\leq p_2$ in $P$ then $f(p_1)\leq f(p_2)$ in $Q$. A morphism $f:P\to Q$ is:
\begin{itemize}
\item \emph{incompatibility-preserving} when, if $p_1\bot p_2$ (i.e. there is no $s\in P$ such that $s\leq p_1$ and $s\leq p_2$), then $f(p_1)\bot f(p_2)$;
\item an \emph{embedding} when $p_1\leq p_2$ in $P$ if and only if $f(p_1)\leq f(p_2)$ in $Q$;
\item \emph{dense} if $\ran(f)$ is dense in $Q$.
\end{itemize}
\end{definition}
Note that an incompatibility-preserving morphism need not to be injective: for example, $U\mapsto\Reg{U}$ defines a non-injective (and surjective) incompatibility-preserving morphism of $\tau$ into $\RO(X,\tau)$ provided $\tau\supsetneq\RO(X,\tau)$.
\begin{definition}
A pre-order $P$ is:
\begin{itemize}
\item 
\emph{separative} if, for every $p, q\in P$, $p\nleq q$ implies that there exists $r\in P$ such that $r\leq p$ and $r\bot q$;
\item 
\emph{atomless} if any $p\in P$ can be refined by two incompatible conditions;
\item
\emph{upward complete} if it admits suprema for all its non-empty subsets.
\end{itemize}
\end{definition}
\begin{remark}
Let $P, Q$ be separative preorders. If a morphism $P\to Q$ is incompatibility-preserving, then it is an embedding. The converse holds if $f$ is dense.
\end{remark}

Observe the following:
\begin{itemize}
\item Any separative pre-order $P$ does not have a minimum (unless it has only one element), and in case it is an upward complete partial order the following holds: for any $p\in P$ and $A\subseteq P$, $\bigvee_P(\downarrow A\cap \downarrow p)=p$ if and only if 
$\downarrow A\cap\downarrow p$ is a dense open subset of $\downarrow p$.
\item Given $(X,\tau)$ a topological space, $\tau$ is an upward complete partial order.
\item If $\bool{B}$ is a Boolean algebra, then $\bool{B}^+:=\bool{B}\setminus\{0_\bool{B}\}$ with the induced order is a separative partially ordered set, and is atomless if $\bool{B}$ has no atoms.
\item A Boolean algebra $\bool{B}$ is complete if and only if $\bool{B}^+$ is an upward complete separative pre-order.
\item If $P$ is a pre-order, we can always surject it onto a separative pre-order via an incompatibility-preserving morphism.
\end{itemize}
\begin{definition}
The \emph{Boolean completion} of a partial order $(P,\leq)$ is a complete Boolean algebra $\bool{B}$ such that
there exists a dense incompatibility-preserving morphism $e:P\to \bool{B}^+$.\\
 The \emph{Boolean completion} of a Boolean algebra $\bool{B}$ is the Boolean completion of the separative order $\bool{B}^+$.
\end{definition}
\begin{theorem}
\label{boo:com}
If $P$ is a partial order, then $\RO(P,\tau_P)$ 
is its Boolean completion, as witnessed by the map 
\[
\begin{aligned}
e:&P\to\RO(P).\\
&p\mapsto \Reg{\downarrow p}.
\end{aligned}
\]
Moreover the Boolean completion of $P$ is unique up to isomorphism.
\end{theorem}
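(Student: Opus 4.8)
The plan is to establish the two assertions separately. Since $\RO(X,\tau)$ is a complete boolean algebra for \emph{every} topological space $(X,\tau)$ (recorded above), the first assertion reduces to checking that $e\colon p\mapsto\Reg{\downarrow p}$ is a dense embedding of $P$ into $\RO(P,\tau_P)^{+}$ in the sense of Definition \ref{def:emb}: order preserving, incompatibility preserving, and with dense range. The one genuinely topological ingredient I will use, repeatedly, is that in the downward topology the open sets are precisely the down-sets, so $\downarrow q$ is the \emph{least} open neighbourhood of $q$; consequently, for any $D\subseteq P$ and $q\in P$ one has $q\in\Cl D$ if and only if $\downarrow q\cap D\neq\emptyset$.

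Granting this elementary fact, I would first note that $e$ is order preserving (as $p_1\leq p_2$ gives $\downarrow p_1\subseteq\downarrow p_2$ and $A\mapsto\Reg{A}$ is monotone) and lands in $\RO(P,\tau_P)^{+}$ (since $p\in\downarrow p\subseteq\Reg{\downarrow p}=e(p)$, so $e(p)\neq\emptyset$). For incompatibility preservation I would reason contrapositively: if $e(p_1)\wedge e(p_2)=\Reg{\downarrow p_1}\cap\Reg{\downarrow p_2}\neq\emptyset$, choose $q$ in it; as $\Reg{\downarrow p_1}$ is an open down-set contained in $\Cl{\downarrow p_1}$, the fact above yields $r_1\leq q$ with $r_1\leq p_1$, and then, since $r_1\leq q\in\Reg{\downarrow p_2}\subseteq\Cl{\downarrow p_2}$, it yields $r\leq r_1$ with $r\leq p_2$, so $r$ witnesses that $p_1,p_2$ are compatible. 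For density of $\ran(e)$, given a nonempty $U\in\RO(P,\tau_P)$ I would pick $q\in U$, observe $\downarrow q\subseteq U$ because $U$ is a down-set, and conclude $e(q)=\Reg{\downarrow q}\subseteq\Reg{U}=U$ with $e(q)\neq\emptyset$; thus below every nonzero element of $\RO(P,\tau_P)$ lies an element of $\ran(e)$, which is exactly density in the downward topology. This shows that $\RO(P,\tau_P)$ is \emph{a} boolean completion of $P$.

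For uniqueness, let $e_i\colon P\to\bool{B}_i^{+}$ ($i=1,2$) be dense embeddings with $\bool{B}_i$ complete. The key lemma is that density forces $b=\bigvee\{e_i(p):e_i(p)\leq b\}$ for every $b\in\bool{B}_i$: if $c$ denotes this supremum then $c\leq b$, and were $c<b$ we would have $b\wedge\neg c>0$, so by density some nonzero $e_i(p)\leq b\wedge\neg c$ would exist, forcing $e_i(p)\leq c$ and $e_i(p)\leq\neg c$, i.e.\ $e_i(p)=0$, absurd. I would then define $\Phi\colon\bool{B}_1\to\bool{B}_2$ by $\Phi(b)=\bigvee\{e_2(p):e_1(p)\leq b\}$ (well defined by completeness of $\bool{B}_2$) together with the symmetric map $\Psi\colon\bool{B}_2\to\bool{B}_1$, and verify that both are order preserving and mutually inverse via the lemma, and that $\Phi$ preserves $\wedge$, $\vee$ and $\neg$; hence $\bool{B}_1\cong\bool{B}_2$.

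The subtlest point is the incompatibility-preservation step, which is precisely where the special structure of $\tau_P$ (the least-neighbourhood property) does its work; the order-preservation and density checks are immediate, and the uniqueness argument, while the longest, is routine boolean-algebra bookkeeping also found in \cite{JECH,viale-notesonforcing}, so in the final write-up I would either include the short verification or cite it.
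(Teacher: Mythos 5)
Your proposal is correct, and it is essentially the standard argument: the paper itself does not prove Theorem \ref{boo:com} at all, but simply refers to \cite[Theorem 14.10]{JECH} and \cite[Theorem 4.3.1]{viale-notesonforcing}, which is exactly the proof you are reconstructing. The existence half is complete and accurate: the observation that $\downarrow q$ is the least $\tau_P$-neighbourhood of $q$ (so $q\in\Cl{D}$ if and only if $\downarrow q\cap D\neq\emptyset$) correctly drives both the incompatibility-preservation and the density checks, and $e(p)\neq\emptyset$ because $\downarrow p$ is open, hence contained in $\Reg{\downarrow p}$. The only place where your write-up is thinner than the argument actually requires is the uniqueness step ``mutually inverse via the lemma'': the inequality $b\leq\Psi(\Phi(b))$ does follow from your supremum lemma, but the reverse inequality is not pure order-theoretic bookkeeping --- one must show that $e_2(q)\leq\Phi(b)$ forces $e_1(q)\leq b$, and the standard way to do this uses density \emph{and} incompatibility preservation of both embeddings: if $e_1(q)\wedge\neg b>0$, pick $r$ with $e_1(r)\leq e_1(q)\wedge\neg b$, deduce that $r$ and $q$ are compatible (otherwise $e_1(r)\bot e_1(q)$), take $s\leq r,q$, check that $s\bot p$ for every $p$ with $e_1(p)\leq b$, and conclude $e_2(s)\wedge\Phi(b)=0$ by the distributive law, contradicting $0<e_2(s)\leq e_2(q)\leq\Phi(b)$. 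Since you explicitly say you would either include this verification or cite it (which is all the paper does), this is a presentational remark rather than a gap; note also that once $\Phi$ and $\Psi$ are known to be order-preserving mutual inverses, preservation of $\wedge$, $\vee$ and $\neg$ is automatic, so that final check can be dropped.
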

\begin{proof}
See the proof of \cite[Theorem 14.10]{JECH} or \cite[Theorem 4.2.4]{viale-bookonforcing}.
\end{proof}
\begin{remark}
If $P$ is separative, $\downarrow p$ is already regular open for every $p\in P$. Moreover, in this case the map $e:p\mapsto\downarrow p$ is injective.
\end{remark}
\begin{fact}
\label{alg:com}
Every Boolean algebra $\bool{B}$
can be densely embedded in the complete Boolean algebra
$\RO(\St(\bool{B}))$
via the Stone duality map
\[
b\mapsto N_b=\bp{G\in\St(\bool{B}):\, b\in G}
\]
which identifies $\bool{B}$ with $\CLOP(\St(\bool{B}))$.
The image is dense because the clopen sets form a base for the topology on
$\St(\bool{B})$. \\
Moreover $\RO(\bool{B}^+,\tau_{\bool{B}^+})\cong\RO(\St(\bool{B}))$
via the unique extension of the map defined on 
$\bool{B}^+$ by $b\mapsto N_b$. 
\end{fact}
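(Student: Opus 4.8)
The plan is to prove Fact~\ref{alg:com} in two parts: first that the Stone duality map $b\mapsto N_b$ densely embeds $\bool{B}$ into $\RO(\St(\bool{B}))$, and then that $\RO(\bool{B}^+,\tau_{\bool{B}^+})\cong\RO(\St(\bool{B}))$. For the first part, I would recall that the Stone Representation Theorem already gives that $b\mapsto N_b$ is a boolean isomorphism onto $\CLOP(\St(\bool{B}))$; since $\St(\bool{B})$ is Hausdorff, every clopen set is regular open (its closure is itself), so $\CLOP(\St(\bool{B}))\subseteq\RO(\St(\bool{B}))$ and hence $b\mapsto N_b$ lands in $\RO(\St(\bool{B}))$. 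An injective boolean homomorphism between boolean algebras is automatically an order and incompatibility preserving map, hence an embedding of the separative orders $\bool{B}^+$ into $\RO(\St(\bool{B}))^+$ in the sense of Definition~\ref{def:emb}. Density is immediate: the sets $N_b$ form a base for the topology of $\St(\bool{B})$, so every nonempty regular open set $U$ contains some nonempty basic open $N_b$ with $N_b\subseteq U$, i.e.\ $N_b\leq U$ in $\RO(\St(\bool{B}))$; this is exactly density of the range in the downward topology. Combined with the fact that $\RO(\St(\bool{B}))$ is a complete boolean algebra (established in the discussion preceding Fact~\ref{comp:bool}), this shows $\RO(\St(\bool{B}))$ is \emph{a} boolean completion of $\bool{B}$.

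For the second part, I would invoke Theorem~\ref{boo:com}: applied to the partial order $P=\bool{B}^+$ with its downward topology $\tau_{\bool{B}^+}$, it says $\RO(\bool{B}^+,\tau_{\bool{B}^+})$ is the boolean completion of $\bool{B}^+$, witnessed by $p\mapsto\Reg{\downarrow p}$, and moreover the boolean completion is unique up to isomorphism. Since we have just exhibited $\RO(\St(\bool{B}))$ as another boolean completion of $\bool{B}^+$ (via $b\mapsto N_b$), uniqueness gives an isomorphism $\RO(\bool{B}^+,\tau_{\bool{B}^+})\cong\RO(\St(\bool{B}))$. To see that this isomorphism is the ``unique extension of $b\mapsto N_b$'', I would note that a dense embedding of a partial order into a complete boolean algebra extends uniquely to an isomorphism of the completions: given $U\in\RO(\bool{B}^+,\tau_{\bool{B}^+})$, one sends it to $\bigvee\{N_b : \Reg{\downarrow b}\leq U\}$ in $\RO(\St(\bool{B}))$, and the standard argument (using separativity and density on both sides) shows this is a well-defined boolean isomorphism agreeing with $b\mapsto N_b$ on the generators.

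I expect the main obstacle to be purely bookkeeping: verifying carefully that the map $b\mapsto N_b$ from $\bool{B}^+$ into $\RO(\St(\bool{B}))$ genuinely satisfies the incompatibility-preservation clause of Definition~\ref{def:emb} rather than just order-preservation — this uses that $N_b\wedge N_c=N_{b\wedge c}=\emptyset$ exactly when $b\wedge c=0_\bool{B}$, which in a boolean algebra is the same as $b\bot c$ in $\bool{B}^+$. The only genuinely substantive input is the uniqueness clause of Theorem~\ref{boo:com}, which we are permitted to assume, so the proof reduces to assembling that with the Stone Representation Theorem and the completeness of $\RO$ of a topological space. I would keep the write-up short, citing Theorem~\ref{boo:com} and the Stone Representation Theorem, and spelling out only the density argument and the identification of the extension map with $b\mapsto N_b$.
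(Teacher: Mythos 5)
Your proposal is correct and takes essentially the route the paper intends: the paper records this as an unproved folklore Fact, and its two ingredients are exactly yours — the inline density remark (clopens form a base, so every nonzero element of $\RO(\St(\bool{B}))$ lies above some $N_b$) together with the uniqueness clause of Theorem \ref{boo:com} applied to the two boolean completions of the separative order $\bool{B}^+$, with the standard extension formula identifying the isomorphism as the unique extension of $b\mapsto N_b$. One minor remark: clopen sets are regular open in an arbitrary topological space (being closed, their closure is themselves), so the Hausdorff hypothesis you invoke there is superfluous.
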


We also recall that a topological space is $T_1$ if any two distinct points can be separated by open sets which contain exactly one of them; it is extremally disconnected if the closure of an open set is open, it is
$0$-dimensional (or totally disconnected) if the clopen sets form a base for the space.

\begin{fact}
Let $(X, \tau)$ be a $T_1$ topological space. Then:
\begin{itemize}
\item if $X$ is $0$-dimensional, it is Hausdorff;
\item if $X$ is Hausdorff, extremally disconnected, it is $0$-dimensional;
\item $X$ is extremally disconnected Hausdorff if and only if $\CLOP(X)=\RO(X)$.
\end{itemize}
\end{fact}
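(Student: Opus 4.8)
The plan is to prove the three items in the order (1), (3), (2), since item (2) will reuse the equivalence established in item (3).

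\textbf{Item (1).} Let $x\neq y$ in $X$. By $T_1$ the singleton $\{y\}$ is closed, so $x\in X\setminus\{y\}$ and $X\setminus\{y\}$ is open; since the clopen sets form a base, choose a clopen $C$ with $x\in C\subseteq X\setminus\{y\}$. Then $C$ and $X\setminus C$ are disjoint open sets with $x\in C$ and $y\in X\setminus C$, so $X$ is Hausdorff. (The $T_1$ hypothesis is genuinely used: the two-point indiscrete space has clopen base but is not Hausdorff.)

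\textbf{Item (3).} The inclusion $\CLOP(X)\subseteq\RO(X)$ always holds, since for clopen $C$ one has $\Cl{C}=C$ and hence $\Reg{C}=\Int{\Cl{C}}=\Int{C}=C$. Assume $X$ is extremally disconnected and let $A\in\RO(X)$, i.e. $A=\Int{\Cl{A}}$; since $A$ is open, $\Cl{A}$ is open, so $\Cl{A}=\Int{\Cl{A}}=A$, whence $A$ is clopen and $\RO(X)\subseteq\CLOP(X)$. Conversely, assume $\CLOP(X)=\RO(X)$ and let $U$ be open; then $\Reg{U}=\Int{\Cl{U}}\in\RO(X)=\CLOP(X)$ is in particular closed, and from $U\subseteq\Reg{U}\subseteq\Cl{U}$ we get $\Cl{U}=\Cl{\Reg{U}}=\Reg{U}$, which is open. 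Thus the closure of every open set is open, i.e. $X$ is extremally disconnected. (Neither direction needs $T_1$.)

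\textbf{Item (2).} Assume $X$ is Hausdorff and extremally disconnected. By item (3), $\RO(X)=\CLOP(X)$, so it suffices to show the regular open sets form a base. Given $x\in U$ with $U$ open, it is enough to find an open $W$ with $x\in W\subseteq\Cl{W}\subseteq U$: then $\Reg{W}=\Int{\Cl{W}}$ is a regular open (hence, by item (3), clopen) set with $x\in W\subseteq\Reg{W}\subseteq U$. To build such a $W$ one separates $x$ from the closed set $X\setminus U$; here one first records the auxiliary fact that in an extremally disconnected space disjoint open sets have disjoint closures (these being clopen), so that Hausdorffness yields, for each $y\notin U$, a clopen neighbourhood $C_y\ni x$ with $y\notin C_y$. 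The main obstacle — and the only non-routine point of the whole proof — is to upgrade this pointwise clopen separation to a single clopen neighbourhood of $x$ contained in $U$, i.e. to separate $x$ from all of $X\setminus U$ at once; this is where the ambient topology is used, and it is automatic in the compact Hausdorff (Stone) spaces to which the fact is applied, $X\setminus U$ then being compact so that finitely many $C_y$ may be intersected.
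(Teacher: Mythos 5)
Your items (1) and (3) are correct and complete (and the paper itself gives no proof of this Fact, quoting it as folklore with references, so there is nothing to compare against there). The problem is item (2), where you have not proved the statement as given: the step you reduce to --- producing, for $x\in U$ open, an open $W$ with $x\in W\subseteq\Cl{W}\subseteq U$ --- is precisely regularity of $X$, and Hausdorffness alone does not supply it. What Hausdorff plus extremal disconnectedness gives cheaply is only the pointwise separation you record (for $x\neq y$ take disjoint open $A\ni x$, $B\ni y$; then $\Cl{A}$ is clopen, contains $x$ and misses $y$), i.e.\ total separatedness; upgrading this to a clopen neighbourhood base is exactly the content of the bullet, and your own fix --- intersecting finitely many of the clopen sets $C_y$ --- invokes compactness of $X\setminus U$, a hypothesis absent from the statement. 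Note also that the natural shortcuts fail: by your item (3), $\Reg{U}$ is clopen, but it contains $U$ rather than being contained in it, and $\bigcap_{y\notin U}\Cl{A_y}$ need not be open. In fact, since in an extremally disconnected space $\RO(X)=\CLOP(X)$, zero-dimensionality is literally equivalent to semi-regularity (the regular open sets forming a base), and Hausdorff spaces need not be semi-regular; so the missing step is the whole substance of the bullet, not a routine omission.

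So as written your proposal establishes the second bullet only for compact (or, by the same one-line argument ``$\Cl{W}$ is clopen'', regular) extremally disconnected Hausdorff spaces. That does cover every use the paper makes of the Fact, since all extremally disconnected Hausdorff spaces occurring there are compact (stonean/Stone spaces), and it is the standard argument in that setting; but to defend the statement in the generality in which it is phrased you would either have to add regularity or compactness to the hypotheses, or supply an argument valid for arbitrary Hausdorff extremally disconnected spaces --- and no such argument appears in your text.
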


We also need the following characterization of  of the regularization operation (see \cite[Lemma 2.1.4]{viale-bookonforcing}):
\begin{lemma}\label{lem:charreg}
Let $(X,\tau)$ be a topological space and $A\subseteq X$. Then its regularization $\Reg{A}$ is the set
\[
\bp{a\in X: \, \exists\, U\text{ open neighborhood of }a \text{ such that }A\cap U\text{ is dense in }U}.
\]
\end{lemma}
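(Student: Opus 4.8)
The plan is to set
\[
S:=\bp{a\in X:\ \exists\,U\text{ open neighborhood of }a\text{ with }A\cap U\text{ dense in }U}
\]
and to prove $\Reg{A}=S$ by a double inclusion, after first observing that $S$ is open. Openness of $S$ is immediate: if $U$ witnesses $a\in S$, then $A\cap U$ is dense in $U$, so the same $U$ witnesses $b\in S$ for every $b\in U$; hence $U\subseteq S$, and $S$, being a union of such open sets, is open.

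For the inclusion $S\subseteq\Reg{A}$, I would take $a\in S$ with witnessing open neighborhood $U$. Density of $A\cap U$ in the subspace $U$ is equivalent to $U\subseteq\Cl{A\cap U}$ (with the closure computed in $X$), and since $\Cl{A\cap U}\subseteq\Cl{A}$ we get $U\subseteq\Cl{A}$; as $U$ is open, $U\subseteq\Int{\Cl{A}}=\Reg{A}$, so in particular $a\in\Reg{A}$. For the converse $\Reg{A}\subseteq S$, given $a\in\Reg{A}=\Int{\Cl{A}}$ I would simply test the neighborhood $U:=\Int{\Cl{A}}$ of $a$. To check that $A\cap U$ is dense in $U$, pick any nonempty open $V\subseteq U$; then $V\subseteq U\subseteq\Cl{A}$, so every point of $V$ lies in $\Cl{A}$ and hence the nonempty open set $V$ meets $A$. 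Since $V\subseteq U$ this gives $\emptyset\neq V\cap A=V\cap(A\cap U)$, so $A\cap U$ is dense in $U$ and $a\in S$.

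This is elementary point-set topology and I expect no serious obstacle; the only points requiring a bit of care are the translation that $A\cap U$ is dense in $U$ if and only if $U\subseteq\Cl{A\cap U}$ (keeping track of whether closures are taken in $U$ or in $X$), and the remark that intersecting with the ambient open set $U$ cannot destroy the density witnessed on $V$, precisely because $V\subseteq U$ forces $V\cap A\subseteq U$. An alternative route would be to manipulate the interior and closure operators directly, but the neighborhood formulation above seems the most transparent.
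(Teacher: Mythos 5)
Your proof is correct. The paper does not actually include an argument for this lemma (it just cites an external reference), so there is nothing to diverge from; your double-inclusion proof is the standard one, and the two delicate points are handled properly: the translation that density of $A\cap U$ in the open subspace $U$ is equivalent to $U\subseteq\Cl{A\cap U}$ with closure taken in $X$, and the verification for $U=\Int{\Cl{A}}$ that any nonempty open $V\subseteq U$ meets $A$ (hence meets $A\cap U$ since $V\subseteq U$).
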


\subsection{Hausdorff compactifications}

We now recall some basic facts about Hausdorff compactifications. 

\begin{definition}
A triple $\cp{Y, \sigma,\iota}$  is a Hausdorff compactification of some topological space $(X,\tau)$ if 
$(Y,\sigma)$ is a compact Hausdorff space and $\iota:X\to Y$ is a topological embedding with a dense image.

For $(X, \tau)$ a $T_0$-topological space, its Stone-\v{C}ech compactification (when it exists) is the unique Hausdorff compactification $\cp{Y,\sigma,\iota}$ of $X$
that has the following universal property: any continuous map $f:X\to K$ where $K$ is compact Hausdorff extends uniquely to a continuous map $\beta(f):Y\to K$ such that $\beta(f)\circ\iota=f$.
\end{definition}

An useful compactification exists for locally compact topological space:
\begin{definition}
\label{def:onepointcomp}
Let $(X,\tau)$ be a topological space. The space $\beta_0(X)=X\cup\bp{\infty}$ has the topology $\beta_0(\tau)$ generated by 
\[
\tau\cup \bp{\beta_0(X)\setminus A: A\in\tau,\, \Cl{A}\text{ compact}}.
\]
\end{definition}
$(\beta_0(X),\beta_0(\tau))$ is always compact. Furthermore:
\begin{itemize}
\item
If $X$ is $T_1$ so is $\beta_0(X)$.
\item
If $X$ is locally compact Hausdorff, $\beta_0(X)$ is Hausdorff.
\end{itemize}

We now address the Stone-\v{C}ech compactification, which exists for a very large class of topological spaces $(X, \tau)$.

\begin{definition}
A topological space $(X,\tau)$ is \emph{Tychonoff} if for any $x\in X$ and $C$ closed subset of $X\setminus\bp{x}$ there is a continuous $f:X\to\mathbb{R}$ which vanishes on $x$ and gets value $1$ constantly on $C$.
\end{definition}

\begin{remark}
Every locally compact Hausdorff space is Tychonoff. Furthermore the Stone-\v{C}ech Compactification Theorem characterizes the spaces which are topologically homeomorphic to subsets of a compact Hausdorff space as the class of Tychonoff spaces. 
\end{remark}

We spell out the details of one possible presentation of the Stone-\v{C}ech compactification.

\begin{notation}
Let $(X,\tau)$ be a topological space. $C\subseteq X$ is a \emph{$0$-set} if there is a continuous
 $f:X\to\mathbb{R}$ such that $C=f^{-1}[\bp{0}]$.\\ 
 We let $\mathcal{O}_0(X)$ denote the open sets which are complements of $0$-sets.
 \end{notation}
 
 \begin{remark}
Note that, for any topological space, the family of its $0$-sets is closed under finite unions and clopen sets are $0$-sets.
Also note that elements of $\mathcal{O}_0(X)$ can be described as the preimage of $[0;a)$
 for some $a<1$ and some continuous $f:X\to [0,1]$. \\
When $(X,\tau)$ is Tychonoff any closed set is the intersection of $0$-sets and also the intersection of open sets which are the complement of $0$-sets.
Hence in a Tychonoff space $\mathcal{O}_0(X)$ forms a base of open sets closed under finite intersections.
\end{remark}

\begin{definition}\label{def:stcechcomp}
Given a topological space $(X,\tau)$, $\beta(X)$ is the set whose elements are the maximal filters of $0$-sets. The topology $\beta(\tau)$ on $\beta(X)$ is generated by 
$\beta(U)=\bp{F\in\beta(X): \,\exists C\in F\, C\subseteq U}$, for $U\in\mathcal{O}_0(X)$.

$\iota_X:X\to\beta(X)$ maps any $x\in X$ to the maximal filter given by $0$-sets containing $x$.
\end{definition}

\begin{remark}\label{rmk:stcechcomp}
The following is well known (see for instance \cite[Appendix D.3]{viale-bookonforcing}) for a topological space $(X,\tau)$:

\begin{itemize}
\item
$\iota_X$ is well-defined and continuous;
\item any continuous $f:X\to K$ with $(K,\sigma)$ a compact Hausdorff space admits a unique continuous extension $\beta(f):\beta(X)\to K$ such that $f=\beta(f)\circ \iota_X$;
\item
$\iota_X$ is injective provided the (singletons of) points of $X$ are the intersection of $0$-sets, and a local homeomorphism exactly when $X$ is Tychonoff;  
\item $\iota_X[X]$ is open in $\beta(X)$ if and only if $X$ is locally compact and Hausdorff.
\end{itemize}
\end{remark}

 The following follows almost immediately from the above observations:
\begin{fact}\label{fac:StCe-iso}
Assume $(X,\tau)$ is Tychonoff. Then $\RO(X,\tau)\cong\RO(\beta(X),\beta(\tau))$ via the map $U\mapsto U\cap X$ for $U\in \RO(\beta(X),\beta(\tau))$.
\end{fact}

\begin{proof}
W.l.o.g. since $\iota_X$ is injective we can assume that $X\subseteq\beta(X)$ and $\iota_X$ is the identity.
We first show that for an open set $U\in\tau$ 
\begin{equation}\label{eqn:keyeqXbetaX}
\mathrm{Reg}_X(U)=\mathrm{Reg}_{\beta(X)}(U)\cap X,
\end{equation} where
$\mathrm{Reg}_Z(U)$ (respectively $\mathrm{Cl}_Z(U)$, $\mathrm{Int}_Z(U)$) denotes the regularization (respectively closure, interior) of $U$ in the topological space $Z$ for $Z$ one among $X,\beta(X)$.

Equation \ref{eqn:keyeqXbetaX} entails that the map $U\mapsto U\cap X$ for $U\in\RO(\beta(X))$ is bijective: it is clearly surjective in view of \ref{eqn:keyeqXbetaX}, and it is also injective, since  any two regular open which have dense intersection coincide (see for example \cite[Fact 2.1.7]{viale-bookonforcing}.

We prove \ref{eqn:keyeqXbetaX}:
note that the family $\beta(Z)$ for $Z$ a $0$-set of $(X,\tau)$ forms a basis of closed sets for $(\beta(X),\beta(\tau))$ and that $\beta(Z)\cap X=Z$ for all such $Z$. Consequentlyfor all $U\subseteq X$:
\begin{align*}
\mathrm{Reg}_{\beta(X)}(U)\cap X&= \mathrm{Int}_{\beta(X)}(\mathrm{Cl}_{\beta(X)}(U))\cap X=\\
&=(\bigcup\bp{W\in\beta(\tau): W\subseteq \mathrm{Cl}_{\beta(X)}(U)})\cap X=\\
&=\bigcup\bp{W\cap X:W\in \beta(\tau)\text{ and } W\subseteq \mathrm{Cl}_{\beta(X)}(U)}=\\
&=\bigcup\bp{W\in\tau: W\subseteq \bigcap \bp{\beta(V): V\text{ is a $0$-set in } X \text{ and }U\subseteq V}}=\\
&=\bigcup\bp{W\in\tau: W\subseteq \bigcap \bp{\beta(V)\cap X: V\text{ is a $0$-set in } X \text{ and }U\subseteq V}}=\\
&=\bigcup\bp{W\in\tau: W\subseteq \bigcap \bp{V: V\text{ is a $0$-set in } X \text{ and }U\subseteq V}}=\\
&=\mathrm{Int}_{X}(\mathrm{Cl}_{X}(U))=\\
&=\mathrm{Reg}_{X}(U),
\end{align*}
where the equality between the third and fourth line follows from the fact that the topology $\tau$ on $X$ is the subspace topology inherited by $X$ as a subspace of $(\beta(X),\beta(\tau))$ (since $X$ is Tychonoff),
while the one between the fifth and the sixth line follows from the fact that $\beta(V)\cap X=V$ for all $0$-sets $V$ for $(X,\tau)$; finally the one between the sixth and the seventh line holds because the $0$-sets form a base for the closed sets of $(X,\tau)$.

%
%
%
%
%
%
%
\end{proof}

\begin{corollary}
\label{corol:betazextrdisc}
Assume $(X,\tau)$ is Tychonoff and extremally disconnected. Then so is 
$(\beta(X),\beta(\tau))$.
\end{corollary}
 \begin{proof}
We have just to prove that $(\beta(X),\beta(\tau))$ is extremally disconnected. This amounts to show that any regular open set in $\beta(\tau)$ is closed. In view of the previous Fact the regular open sets of $(\beta(X),\beta(\tau))$ are of the form $\beta(U)$ for $U\in \RO(X,\tau)$;
now, since $(X,\tau)$ is extremally disconnected, any such $U$ is also clopen for $\tau$;
it can then be checked that $\beta(U)$ is clopen for $\beta(\tau)$ as well (see for details \cite[Proposition D.3.10(7)]{viale-bookonforcing}).
\end{proof}
 \subsection{Maximal filters of closed or open sets and Hausdorff compactifications}
 We now want to relate the points of Hausdorff compact spaces to (maximal) filters of topological sets. This is standard in the literature. However, we prefer to discuss in detail the delicate topological points we want the reader to focus on.
%
%
The guiding example of filter on a partial order $P$ is given by the family $G_x$ of open neighborhoods of a point $x\in X$  for $(X,\tau)$ a topological space and $(P,\leq)$ the partial order $\mathcal{O}(X)^+$ given by non-empty open sets of $(X,\tau)$. 
This filter in general is not a maximal filter (consider the interval $[0,1]$ with the usual euclidean topology; the filter of open neighborhoods of $1/2$ is not maximal; it can be extended to a maximal filter containing $[0;1/2)$ and to another maximal filter containing $(1/2;1]$).
Moreover if $\mathcal{A}$ is a base of non-empty open sets for $X$, $G_x\cap\mathcal{A}$
is also a filter on the partial order $(\mathcal{A},\subseteq)$, but in general a maximal filter $G$ on 
$\mathcal{A}$
may not identify a point of $X$: the intersection of the open sets in $G$ (or even of the closure of the open sets in $G$) may be empty (consider on $\mathbb{R}$ with the euclidean topology a maximal filter of open sets extending 
$\bp{(a,+\infty):a\in\mathbb{R}}$). This identification of maximal filters of open sets to points of $X$ 
is clearly related to the existence of compactifications of $X$.
If $X$ is compact Hausdorff any maximal filter $G$ on $(\mathcal{A},\subseteq)$ determines a unique point 
$x$ of $X$ with $G\supseteq G_x\cap\mathcal{A}$, this point $x$ is the unique
element of the closed set $\bigcap\bp{\Cl{U}:U\in G}$. 
However different maximal filters of open (or closed) sets
could determine the same point of $X$ (as shown by the case outlined above for $1/2$ in $[0,1]$).

We now compare various possible ways to describe points of some compactification of 
a topological space. Typically we can identify a point of a compact Hausdorff space by means of:
\begin{itemize}
\item its open neighborhoods;
\item its regular open neighborhoods;
\item its open neighborhoods which are complements of $0$-sets;
\item the family of closed sets to which it belongs;
\item the family of $0$-sets to which it belongs;
\item the family of clopen sets to which it belongs (in case the space is $0$-dimensional).
\end{itemize}
Note that each of the above families determines a (maximal?) filter in the corresponding partial order under inclusion.
Hence given any topological space $(X,\tau)$ it is tempting to try to construct a Hausdorff compactification of $X$ whose points should be the maximal filters of one of the above families with respect to $\tau$. This is exactly what occurs
in the case of the above construction of the Stone-\v{C}ech compactification.

We are interested in comparing the above types of maximal filters and understand when these maximal filters define the same ``compactification''.
We now show that:
\begin{itemize} 
\item\textbf{For any $(X,\tau)$, maximal filters on $\RO(X)$ naturally correspond to maximal filters of open sets:}
 assume towards a contradiction that some maximal filter $F$ on $\RO(X)$ admits two incompatible extensions $F_0,F_1$ to maximal filters on $\mathcal{O}(X)^+$.
If any elements of $F_0$ intersects any elements of $F_1$, $F_0\cup F_1$ is a prefilter strictly containing both of them. Hence there are $U_i\in F_i$ such that $U_0\cap U_1$ is empty.
This occurs if and only if $\Reg{U_0}\cap\Reg{U_1}$ is empty.
Now $U_i\cap U$ is non-empty for all $U\in F$ and
this occurs if and only if $\Reg{U_i}\cap U$ is non-empty for all $U\in F$. Since $F$ is maximal this can be the case only if $\Reg{U_i}\in  F$ for both $i$, which contradicts $F$ being a prefilter.\\
We note that for $G$ ultrafilter on $\RO(X)$ the unique maximal filter of open sets $\bar{G}$ is given by 
$\bp{U:\Reg{U}\in G}$.
\item\textbf{If $(X,\tau)$ is compact Hausdorff, maximal filters of closed sets correspond to maximal filters of $0$-sets:}
Any closed set $C$ is an intersection of $0$-sets since $(X,\tau)$ is Tychonoff.
Now if $\F$ is a maximal filter of closed sets, let $\F^*$ be the intersection of $\F$ with the $0$-sets.
It suffices to argue that $\F^*$ is a maximal filter of $0$-sets. If not there is a $0$-set $D$ such that
$D\cap E$ is non-empty for all $0$-sets in $\F$ and $D\cap C$ is empty for some closed and non-empty 
non-$0$-set in $\F$. Now if $C=\bigcap_{i\in I}C_i$ with each $C_i$ a $0$-set, we get that each $C_i$ is in $\F$. Hence we have on the one hand that
\(
\emptyset=D\cap C=\bigcap_{i\in I} D\cap C_i,
\)
on the other hand that $\bp{D\cap C_i:i\in I}$ is a family of closed sets with the finite intersection property. Since $X$ is compact the latter implies that $D\cap C$ is non-empty. We reached a contradiction.

Conversely if $\G$ is a maximal filter of $0$-sets, $\bigcap\G$ is a singleton $\bp{x_\G}$
(we use here that the space is compact Hausdorff).
Therefore $\G^*$ is the intersection with the $0$-sets
of the maximal filter of closed sets given by closed sets containing $x_\G$.
Hence we have a bijective correspondence between maximal filters of closed sets and maximal filters of $0$-sets.
%
\item\textbf{If $(X,\tau)$ is extremally disconnected, maximal filters on open sets correspond to maximal filters on $\mathcal{O}_0(X)^+$:} 
In extremally disconnected spaces regular open sets are closed, hence they are $0$-sets, and the complement of $0$-sets. This gives that $\mathcal{O}_0(X)\supseteq\RO(X)$. Hence any maximal filter on $\mathcal{O}_0(X)^+$ induces a maximal filter on $\RO(X)$. 
By the first item these overlaps with maximal filters of open sets.
We also note that whenever $\F$ is a maximal filter of open sets, $\F\cap \mathcal{O}_0(X)^+$ is a
maximal filter on $\mathcal{O}_0(X)^+$: otherwise $\F\cap \RO(X)$ could have two incompatible extensions to a maximal filter on $\mathcal{O}_0(X)^+$, and therefore also to $\mathcal{O}(X)^+$. We already saw that the latter is impossible.
\end{itemize}
Note that if $(X,\tau)$ is extremally disconnected but not Hausdorff, the filters of (regular) open neighborhoods of points of $X$ might not be maximal filters of (regular) opens\footnote{Consider the case of Zariski topology on $\mathbb{R}$: it is extremally disconnected as the closure of any open set is $\mathbb{R}$ which is clopen; there is a unique maximal filter given by the family of all open non-empty sets; $\mathbb{R}$ is the unique regular open of the topology; the filters of open neighborhoods of points are not maximal.}. We have already seen that in compact Hausdorff spaces $X$ maximal filters of closed sets determine points of $X$, hence they may not determine a maximal filter of open sets (in case $X$ is not extremally disconnected).

Wrapping everything together we get the following fact (see for instance \cite[Sections II.4.9 and IV.2.3]{JohnStone} for partial versions of it):
\begin{proposition}
Assume $(X,\tau)$ is extremally disconnected, compact, Hausdorff. Then there are natural identifications of the families of:
\begin{itemize}
\item maximal filters on $\RO(X)$;
\item maximal filters on $\CLOP(X)$;
\item maximal filters on $\mathcal{O}_0(X)^+$;
\item maximal filters of open sets;
\item maximal filters of closed sets;
\item maximal filters of $0$-sets.
\end{itemize}
\end{proposition}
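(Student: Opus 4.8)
The statement is obtained by stitching together the four bullet points established immediately above, so the plan is essentially coherence bookkeeping. First I would record that, since $X$ is extremally disconnected, $\CLOP(X)=\RO(X)$ (this is the characterization of extremal disconnectedness recalled above); thus a maximal filter on $\RO(X)$ \emph{is literally} a maximal filter on $\CLOP(X)$, and there is nothing to prove between these two families. I would also note that $X$, being compact Hausdorff, is Hausdorff, and that a Hausdorff extremally disconnected space is $0$-dimensional; hence the hypotheses of all four displayed facts are met (respectively: $X$ arbitrary; $X$ $0$-dimensional; $X$ compact Hausdorff; $X$ extremally disconnected).

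Next I would assemble the chain of bijections along the ``hexagon'' formed by the six families. The first bullet identifies maximal filters on $\RO(X)$ with maximal filters of open sets, via $G\mapsto\bp{U\in\tau:\Reg{U}\in G}$ and, conversely, $\F\mapsto\F\cap\RO(X)$. The fourth bullet identifies maximal filters of open sets with maximal filters on $\mathcal{O}_0(X)$, via $\F\mapsto\F\cap\mathcal{O}_0(X)$ and, conversely, the unique extension of a maximal filter on $\mathcal{O}_0(X)\supseteq\RO(X)$ to a maximal filter of open sets. The second bullet identifies maximal filters on $\CLOP(X)$ with maximal filters of closed sets, each generated from the other via $\supseteq$. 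The third bullet identifies maximal filters of closed sets with maximal filters of $0$-sets, via $\F\mapsto\F\cap\bp{0\text{-sets}}$ and, conversely, $\G\mapsto\bp{C\text{ closed}:x_\G\in C}$ where $\bp{x_\G}=\bigcap\G$ (here compactness and Hausdorffness are used). Composing along the cycle $\RO(X)\leftrightarrow\text{open}\leftrightarrow\mathcal{O}_0(X)$ together with $\RO(X)=\CLOP(X)\leftrightarrow\text{closed}\leftrightarrow 0\text{-sets}$ then puts all six families pairwise in bijection.

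The only point requiring genuine care — though no new idea — is that these identifications are \emph{natural}, i.e. that composing around any loop of them yields the identity, so that the six families are canonically one and the same object. My argument for this would be that every bijection above, and its inverse, is one of two elementary operations: \emph{restriction} of a maximal filter to a subfamily, or passage to the unique maximal filter it \emph{generates} inside a larger family (via $\supseteq$, via complementation $U\mapsto X\setminus U$ on $\CLOP(X)$, or via the regularization $\Reg{\cdot}$). Inside the nested families $\RO(X)=\CLOP(X)\subseteq\mathcal{O}_0(X)\subseteq\mathcal{O}(X)$ on the open side and $\bp{0\text{-sets}}\subseteq\bp{\text{closed sets}}$ on the closed side — linked through $\CLOP(X)$ by complementation — these two operations are mutually inverse at the level of maximal filters, and compactness of $X$ guarantees both that restriction preserves maximality and that the generated filters are again maximal. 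Chasing the hexagon with this in mind returns the identity on each family. The main obstacle is thus not mathematical depth but keeping this coherence straight; in the final write-up I would likely fix one family (say maximal filters on $\CLOP(X)$, which Stone duality identifies with the points of $X$) and exhibit each of the other five bijections as compatible with it, which makes the loop condition automatic.
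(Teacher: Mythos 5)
Your proposal is correct and follows essentially the same route as the paper, which obtains the proposition by ``wrapping together'' the four preceding identifications ($\RO(X)\leftrightarrow$ open sets; $\CLOP(X)\leftrightarrow$ closed sets; closed sets $\leftrightarrow$ $0$-sets; open sets $\leftrightarrow\mathcal{O}_0(X)$) and the equality $\RO(X)=\CLOP(X)$ for extremally disconnected spaces. Your extra attention to the coherence of the composed bijections is a reasonable elaboration of what the paper leaves implicit, not a different argument.
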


It will be convenient in the next sections to focus on spaces $(X,\tau)$ which admit a base of regular open sets and are $T_0$.
The Tychonoff spaces have this property, as well as separative and atomless partial orders endowed with the downward topology (which are $T_0$, but not $T_1$).
Note that the latter spaces are not \emph{sober} (as defined in \cite[Section IX.3]{MacLMoer}). Our focus will be on topological spaces $(X,\tau)$ with $\tau$ arising from the downward topology induced by a separative atomless partial order on $X$ or $(X,\tau)$ compact Hausdorff. In particular our focus is on a class of frames orthogonal to the sober spaces as in \cite[Chapter IX]{MacLMoer}.

We conclude this section giving a characterization of extremally disconnected compact Hausdorff spaces which we will need in Section \ref{sec:sheafposet}.
 
\begin{proposition}
 \label{comp:exdisc}
 Let $(X,\tau)$ be a compact Hausdorff space. Then $X$ is extremally disconnected if and only if it is the Stone-\v{C}ech compactification of each of its dense subsets.
 \end{proposition}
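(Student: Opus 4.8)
The plan is to prove both implications separately, making free use of Facts and Propositions from the excerpt.

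\textbf{($\Rightarrow$)} Suppose $(X,\tau)$ is extremally disconnected compact Hausdorff, and let $D\subseteq X$ be dense. Since $X$ is Tychonoff (compact Hausdorff), so is the subspace $D$, hence the Stone-\v{C}ech compactification $\beta(D)$ exists. First I would note that the inclusion $\iota\colon D\hookrightarrow X$ is a topological embedding with dense image, so $\ap{X,\tau,\iota}$ is a Hausdorff compactification of $D$; it remains to verify the universal property. So let $f\colon D\to K$ be continuous with $K$ compact Hausdorff. The key point is to extend $f$ to all of $X$. Here I would exploit extremal disconnectedness through $\RO(X)$: by Fact \ref{fac:StCe-iso} and the fact preceding it, the dense embedding $D\hookrightarrow X$ induces an isomorphism $\RO(X)\cong\RO(D)$, and since $X$ is extremally disconnected, $\CLOP(X)=\RO(X)$. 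Then I would use that $X=\St(\CLOP(X))$ (Stone duality) together with $\RO(D)\cong\RO(X)=\CLOP(X)$: a maximal filter of open sets on $D$ gives, via the identification $\RO(D)\cong\CLOP(X)$, an ultrafilter on $\CLOP(X)$, i.e. a point of $X$. This gives a continuous surjection $\beta(D)\to X$; combined with the canonical continuous surjection $\beta(D)\to X$ being already the identity on $D$ and the uniqueness of compactifications when the universal property holds, I would conclude that $X$ together with $\iota$ \emph{is} $\beta(D)$. More directly: to extend $f\colon D\to K$, first extend it to $\bar f\colon\beta(D)\to K$, and then precompose with a section of the quotient map $\beta(D)\to X$ — but cleaner is to argue that the quotient map $\beta(D)\to X$ is injective, hence a homeomorphism, which follows because distinct points of $\beta(D)$ are separated by a $0$-set of $D$, which (via $\RO(D)\cong\CLOP(X)$ and extremal disconnectedness of $X$) extends to a clopen subset of $X$ separating the corresponding points.

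\textbf{($\Leftarrow$)} Suppose $X$ is a compact Hausdorff space which is the Stone-\v{C}ech compactification of each of its dense subsets; I must show $X$ is extremally disconnected, i.e. $\Cl{U}$ is open for every open $U\subseteq X$. Equivalently, by the last item of the Fact characterizing extremally disconnected $T_1$ spaces, it suffices to show $\CLOP(X)=\RO(X)$. I would proceed by contradiction: suppose $\Cl{U}$ is not open for some open $U$; equivalently there is a regular open $V=\Reg{U}$ with $V\notin\CLOP(X)$, so $\partial V=\Cl{V}\setminus V\neq\emptyset$. The idea is to build a dense subset $D$ of $X$ and a continuous function $f\colon D\to[0;1]$ which does not extend continuously to $X$, contradicting that $X=\beta(D)$. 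A natural candidate: take $D = V\cup(X\setminus\Cl V)$, which is open and dense (its complement is $\partial V$, which has empty interior since $V$ is regular open). On $D$ set $f\equiv 0$ on $V$ and $f\equiv 1$ on $X\setminus\Cl V$; this is continuous on $D$ since $V$ and $X\setminus\Cl V$ are disjoint clopen subsets of $D$. If $f$ extended to a continuous $\bar f\colon X\to[0;1]$, then by density $\bar f$ would be $0$ on $\Cl V$ and $1$ on $\Cl{(X\setminus\Cl V)}$; but any point of $\partial V$ lies in $\Cl V$ and, because $V$ is regular open so $X\setminus\Cl V$ is dense in $X\setminus V\supseteq\partial V$, also in $\Cl{(X\setminus\Cl V)}$, forcing $0=1$, a contradiction. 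Hence no such $V$ exists and $X$ is extremally disconnected.

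\textbf{Main obstacle.} The routine direction is $(\Leftarrow)$; the subtler direction is $(\Rightarrow)$, where the real content is producing the continuous extension of an arbitrary $f\colon D\to K$ to $X$. The cleanest route, which I would take, is to show the canonical quotient map $q\colon\beta(D)\to X$ is a homeomorphism: one shows $q$ is injective by separating preimages of distinct points of $\beta(D)$ using $0$-sets of $D$, and the key leverage is that in the extremally disconnected $X$ every regular open subset is clopen, so the regular-open sets of $D$ (isomorphic to $\RO(X)=\CLOP(X)$ by the fact before Fact \ref{fac:StCe-iso} and Fact \ref{fac:StCe-iso}) are exactly the traces on $D$ of clopen subsets of $X$ — this is what prevents two maximal filters of opens on $D$ that are identified in $X$ from being distinct. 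Making this filter-theoretic bookkeeping precise, using Notation \ref{not:stonespaces} and the identifications catalogued in the Proposition on maximal filters, is the part that requires care.
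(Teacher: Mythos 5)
Your $(\Leftarrow)$ direction is essentially the paper's own argument: the paper likewise picks a regular open, non-closed $U$, the dense open set $U\cup(X\setminus\Cl{U})$, and a locally constant function on it that cannot be extended continuously across the non-empty boundary; your variant with values in $[0;1]$ is the same computation and is correct. The $(\Rightarrow)$ direction is where you genuinely diverge. The paper proves it by a direct, self-contained construction: for $G\in X$ (viewed as an ultrafilter of clopen $=$ regular open neighborhoods) it shows $\bigcap_{V\in G}\Cl{f[V\cap W]}$ is a single point, defines $\beta(f)(G)$ to be that point, and gets continuity from the equivalence ``$U\cap\bigcap_{V\in G}\Cl{f[V\cap W]}\neq\emptyset$ iff $\Reg{f^{-1}[U]}\in G$''. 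You instead invoke the existence of $\beta(D)$ and aim to show that the canonical map $q\colon\beta(D)\to X$ extending the inclusion is injective, hence a homeomorphism; this is a legitimate and shorter route (it buys the whole universal property at once instead of re-proving the extension lemma), at the price of leaning on standard facts about the z-ultrafilter construction of $\beta(D)$ (distinct points contain disjoint $0$-sets of $D$, and $q(p)\in\Cl{Z}$ whenever $Z\in p$) which the paper's proof deliberately avoids.

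The one step you should repair is the separation argument: a $0$-set of $D$ is closed, not open, so it does not ``extend to a clopen subset of $X$'' via $\RO(D)\cong\RO(X)=\CLOP(X)$ --- that isomorphism applies only to regular open sets. The correct bridge is: if $p_1\neq p_2$ in $\beta(D)$, they contain disjoint $0$-sets $Z_1,Z_2$ of $D$; disjoint $0$-sets are completely separated, so $Z_i\subseteq U_i$ with $U_1,U_2$ disjoint open in $D$; disjoint open sets have disjoint regularizations, and $\Reg{U_i}$ (computed in $D$) corresponds under $\RO(D)\cong\CLOP(X)$ to disjoint clopen sets $C_i\subseteq X$ with $Z_i\subseteq C_i$; since $q(p_i)\in\Cl{Z_i}\subseteq C_i$, we get $q(p_1)\neq q(p_2)$. (Equivalently: in an extremally disconnected space disjoint open sets have disjoint closures, which is exactly what makes $q$ injective; this is where extremal disconnectedness enters, and it fails in general compact Hausdorff spaces, as your own $(\Leftarrow)$ example shows.) Also drop the aside about ``precomposing with a section of $q$'': no continuous section is available until $q$ is already known to be a homeomorphism, so it is not an alternative argument.
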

 
 The Proposition follows by a combination of \cite[Proposition page 284 - Section 10.47, Theorem page 25 - Section 1.46]{RUSSSTCECOMP}; for a direct proof see \cite[Thm D.3.16]{viale-bookonforcing}.

\section{Preliminaries on Grothendieck topologies, (pre)sheaves, and (\'etal\'e) bundles}\label{sec:grothtoppreshetspbun}

In this section we recall the basic notions of sheaf theory we will need in the last two sections of the paper. The reference text for unexplained concepts is \cite{MacLMoer}.

\begin{notation}\label{not:globgrtop}
Fix a pre-order $(P,\leq)$ and a \emph{concrete} category\footnote{A concrete category is a category whose objects are sets equipped with some structure.}  $\mathcal{C}$.
\begin{itemize}
\item
A functor $\F:P^\mathrm{op}\to\mathcal{C}$ is a \emph{$\mathcal{C}$-valued presheaf on $P$}; we denote by $f\restriction c$ the application of 
$\F(c\leq b)$ to some $f\in \F(b)$.
\item
A \emph{matching family} $\bp{f_a:a\in A}$ on $\F$ for $A\subseteq P$ is a function in $\prod_{a\in A}\F(a)$
such that
$f_c\restriction b=f_a\restriction b$ whenever $a, c$ are both in $A$ and $b\leq a,c$.
\item
A \emph{gluing} $f$ for a matching family $\bp{f_a:a\in A}$ on $\F$  is an element of $\F(b)$ for some $b\in P$ which is an upper bound for $A$ such that $f\restriction a=f_a$ for all $a\in A$.
\item
We say that $\F$ is \emph{determined by its global sections} if $P$ has a top element $1_P$ and $\F(c\leq b)$ is surjective for all $c\leq b$ in $P$.
\end{itemize}
\end{notation}

\subsection{The sup and the dense Grothendieck topologies}
The following definition is a necessary recap for those readers familiar with set theory but not with the notion of Grothendieck topology\footnote{We refer to \cite[Chapters II and III]{MacLMoer} for a general introduction to Grothendieck topologies.}.

We start with the simple notion of \emph{topological} sheaf.
\begin{definition}\label{def:globgrtop0}
Let $(X,\tau)$ be a topological space.

A presheaf $\F:\tau^{\mathrm{op}}\to\mathcal{C}$ is a \emph{topological sheaf} if for any matching family $\bp{f_i:i\in I}$ with each $f_i\in  \F(U_i)$ there is a unique gluing $f\in \F(\bigcup_{i\in I}U_i)$.\footnote{For the basic theory of topological sheaves we refer to \cite[Chapter II]{MacLMoer}.}

\end{definition}

Now we generalize this concept to a larger family of partial orders, among which (the positive elements of) complete Boolean algebras. We introduce the dense Grothendieck topology and the sup Grothendieck topology on an (upward complete) preorder.\footnote{We need upward completeness to define properly the sup topology, it is not required for defining correctly the dense topology. We refer the reader to \cite[Chapter III, Section 2, Examples (d), (e) pag. 114-115]{MacLMoer} for details.}

\begin{definition}
Let $(P,\leq)$ be a preorder. 

\begin{itemize}

\item
The \emph{dense} topology on $P$ is the functor
$J^{\mathrm{den},P}:P^{\mathrm{op}}\to\dSet$ which maps any $p \in P$ to the family of dense open subsets of $\downarrow p$; the restriction map on $J^{\mathrm{den},P}(q\leq p)$ assigns $D\cap \downarrow q$ to any $D \in J^{\mathrm{den},P}(p)$.
\item Assume further that $P$ is upward complete (i.e. $P$ admits suprema for all its non-empty subsets). The \emph{sup} topology on $P$ is the functor
$J^{\mathrm{sup},P}:P^{\mathrm{op}}\to\dSet$ which maps any $p \in P$ to the family of subsets of $\downarrow p$ whose supremum in $P$ is exactly $p$. The restriction map for $J^{\mathrm{sup},P}$ is that of $J^{\mathrm{den},P}$.
\end{itemize}

\end{definition}

Note that the two Grothendieck topologies are similar but not overlapping. We will explore now their similarities and distinctions in some detail:

\begin{remark}
\emph{}

\begin{enumerate}
\item The readers familiar with forcing will have recognized the dense Grothendieck topology on a preorder $P$ as the central object of study when analyzing the effects of $P$ -seen as a forcing notion- on the semantics of the generic extensions of $V$ produced by it.
\item Let $(X,\tau)$ be  topological space. Then the sup topology on $\tau$ assigns to each $U\in\tau$ the family consisting of those sets $A\subseteq \pow{U}\cap\tau$ which are downward closed under inclusion (i.e. $V\in A$ and $W\subseteq V$ in $\tau$ entail $W\in A$ as well) and whose union covers $U$ (i.e. $\bigcup A=U$). In retrospect (see Def. \ref{def:globgrtop} below) the sup topology on $\tau$ is exactly the Grothendieck topology  used to define the notion of topological sheaf on $(X,\tau)$ given in Def. \ref{def:globgrtop0}.
\item
Assume $P=\tau^+$ for some topological space $(X,\tau)$. 
Then $(P,\subseteq)$ is upward complete, and:
\begin{itemize} 
\item
the supremum topology on $P$ assigns to each non-empty open set $p$ in $\tau$ the family of its open covers $A$ (i.e. such that $\bigcup A=p$), which are closed under downward inclusion (i.e. such that $q\in A$ and 
$\tau^+\ni r\subseteq q$ entail $r\in A$),
\item
while the dense topology on $P$ assigns to such an open set $p$ the family of its families
of "almost" covers $A\subseteq \tau^+$ which are downward closed and such that $\bigcup A$ is a dense open subset of $p$.
\end{itemize} 
In particular $J^{\mathrm{den},P}(p)\supseteq J^{\mathrm{sup},P}(p)$ for all $p\in P$ (with inclusion being strict in many cases: e.g. let $(X,\tau)$ be the real numbers with Euclidean topology and $P=\tau^+$; the downward closure of $(0;1/2)\cup(1/2;1)$ is in $J^{\mathrm{den},P}((0;1))$ but not in $J^{\mathrm{sup},P}((0;1))$).

\item
Assume on the other hand that $P=\bool{B}^+=\bool{B}\setminus\bp{0_\bool{B}}$ for  a complete Boolean algebra $\bool{B}$. Then $J^{\mathrm{den},P}$ and 
$J^{\mathrm{sup},P}$ coincide: a peculiar property of complete Boolean algebras is that a downward closed subset $A$ of $\downarrow p$ is dense if and only if $\bigvee_\bool{B} A=p$ (see for example \cite[Fact 3.11.7]{viale-bookonforcing}).  

\item
As outlined by the previous item, this is not the case when $P$ is a frame which is not a complete Boolean algebra.

\item
Finally assume $P$ is a frame. Then the dense topology on $P$ is trivial since any non-empty downward closed subset of $P$ has $0_P$ as an element and is therefore dense in $P$, i.e. 
\[
J^{\mathrm{den},P}(p)=\bp{X\subseteq \downarrow p: X \text{ is non-empty and downward closed}}
\]
for all $p\in P$. On the other hand note that $\emptyset$ belongs to $J^{\mathrm{sup},P}(0_P)$, while it does not belong to $J^{\mathrm{sup},P}(p)$ for any $p\in P^+$, as $\bigvee\emptyset=0_P$. This will have non-trivial effects on the properties of
 $J^{\mathrm{sup},P}$-separated presheaves on $P$ to be defined below. 

\end{enumerate}
\end{remark}


\subsection{Sheaves and separated presheaves on a Grothendieck topology}
We now turn to the notion of sheaf and separated presheaf according to these two topologies.

\begin{definition}\label{def:globgrtop}
Let $(P,\leq)$ be a preorder,  $\mathcal{C}$ be a concrete category, $J$ be either the dense topology or the sup topology on $P$ (the latter just in case $P$ is upward complete).
\begin{itemize}
\item A presheaf
$\F:P^\mathrm{op}\to\mathcal{C}$ is a \emph{$J$-separated presheaf} if for all $p\in P$ and  all matching families $\bp{f_d:d\in D}$ with $D\in J(p)$ there is at most one $g\in \F(p)$ such that $f_d=g\restriction d$ for all $d\in D$.
\item 
A presheaf $\F:P^\mathrm{op}\to\mathcal{C}$ is
a \emph{$J$-sheaf} if for all $b\in P$ and  all matching families $\bp{f_d:d\in D}$ with $D\in J(b)$ there is exactly one $g\in \F(b)$ such that $f_d=g\restriction d$ for all $d\in D$.
\end{itemize}
\end{definition}
We note the following:
\begin{itemize}
\item
Let $P$ be a frame and
$\F$ be a presheaf on  $P$.  Then $\F(0_P)$ has at most one element if $\F$ is a separated presheaf for the sup topology, and $\F(0_P)$ has exactly one element when it is a sheaf: the empty set trivially satisfies the notion of matching family,\footnote{One has to verify for $\emptyset=\bp{f_a:a\in\emptyset}$ the universal sentence asserting that for all $a,b\in P$ if $a,b\in\emptyset$, then $f_a\restriction a \wedge b=f_b \restriction a\wedge b$. The premise of the implication is alwasy false for all $a,b\in P$, hence the implication is always true for all $a,b\in P$. } and we already noted that $\emptyset\in J^{\mathrm{sup},P}(0_P)$. This gives that the empty family (which is indexed by the empty set) is a matching family for $\F$. Now any $g\in \F(0_P)$ trivially satisfies the property of being a gluing of the empty family.\footnote{One has to verify that for all $a\in P$, if $a\in\emptyset$, then $g\restriction a=f_a$. This is again a universal quantification on  an implication which is always verified because its premise is false.}. Hence $\F(0_P)$ cannot have more than one gluing of the empty family if $\F$ is a separated presheaf, and must have exactly one gluing if $\F$ is a sheaf. 
\item
As already outlined, when $(X,\tau)$ is a topological space, $\F$ is a sheaf on $\tau$ for the
sup topology if and only if it is a topological sheaf for $(X,\tau)$.
\item When $P$ is a frame, $\F$ is a sheaf on $P$ for the sup topology if and only if $\F\restriction P^+$ is is a sheaf on $P^+$ for the sup topology. This follows easily from the observation that a sheaf $\F$ for the sup topology on a frame $P$ is such that $\F(0_P)$ has exactly one element; consequently, to check the validity of the sheaf condition for the sup topology on $P$ one can restricts the attention to $P^+$.
\item For a complete boolen algebra $\bool{B}$, $\F$ is a sheaf on $\bool{B}$ for the sup topology if and only if $\F\restriction \bool{B}^+$ is a sheaf on $\bool{B}^+$ for the dense topology: by the previous item $\F$ is a sheaf on $\bool{B}$ for the sup topology if and only if $\F\restriction \bool{B}^+$ is a sheaf on $\bool{B}^+$ for the sup topology. But on 
$\bool{B}^+$ we aleady noted that the $\sup$ and the dense topology coincide.
\item We also note that a sheaf $\F$ for the sup topology on a complete boolean algebra $\bool{B}$ must have surjective restriction maps for all $b\leq c$ such that $\F(c)\neq\emptyset$: Assume $f\in \F(b)$ and $g\in\F(c)$ with $b\leq c$, then $\bp{f\restriction d:d\leq b}\cup\bp{g\restriction e: e\leq\neg b\wedge c}$ is a matching family indexed by the set $\downarrow\bp{b,\neg b\wedge c}$, hence it has a unique gluing $h\in\F(c)$. Clearly $h\restriction b=f$. Consequently a sheaf $\F$ such that $\F(1_\bool{B})$ is non-empty is determined by its globl sections.
\end{itemize}

In the remainder of this paper, when dealing with sheaf theoretic topics, we will (almost exclusively)  focus on the sup Grothendieck topology for a complete boolean algebra $\bool{B}$ and on presheaves $\F$ on $\bool{B}$ determined such that $\F(1_\bool{B})$ is non-empty.

\subsection{Bundles and \'etal\'e spaces}
Let us now move to the notion of \'etal\'e space (again we follow closely what is done in \cite[Chapter II]{MacLMoer}).
\begin{definition}
A \emph{bundle} on a topological space $(X,\tau)$ is a triple $(\pi, E, \sigma)$ where $(E, \sigma)$ is a topological space and $\pi:E\to X$ is a surjective continuous function. $X$ is the \emph{base space}.\footnote{We write $\pi:E\to X$ instead $(\pi, E, \sigma)$ when we what to emphasize the base space.}
A \emph{local section} of a bundle $(\pi, E, \sigma)$ is a continuous map $s:U\to E$ where $U\subseteq X$ is open, with $\pi\circ s$ being the identity on $U$.

 A bundle $(\pi, E, \sigma)$ on $X$ gives an \emph{\'etal\'e space} structure on $(E,\sigma)$ if $\pi$ is a \emph{local homeomorphism}, i.e. for some base $\mathcal{B}$ for $(X,\tau)$ the set
 \[
 \bp{s[U]:\,s\text{ is a section on $\pi$, }U\subseteq\dom(s),\, U\in\mathcal{B}}
 \]
 forms a basis of open sets for $(E,\sigma)$.
\end{definition}

Note that $(\pi,E,\sigma)$ is an \'etal\'e bundle on $(X,\tau)$ if and only if every continuous section $s:U\to E$ for $\pi$ with $U\in\tau$ is an open map.

\section{Dualities and adjuctions on topological spaces with open continuous maps}
\label{sec:dual}

In this section we define the type of partial orders/topological spaces over which we consider presheaves. Part of the content of this section recalls standard facts in the theory of frames and locales, other results are (at the best of our knowledge) hardly traceable in the literature. We refer to \cite[Chapter IX]{MacLMoer} and to \cite[Chapter C1]{JohnElephant} for an overview of the literature on the topic.
We need a bit of terminology.

%
%

\begin{definition}\label{def:adjhom}
Let $\bool{B},\bool{C}$ be Boolean algebras.

A map $i:\mathsf{B}\to\mathsf{C}$ is:
\begin{itemize}
\item 
an \emph{homomorphism} if it preserves the Boolean 
operations\footnote{In particular, an homomorphism of Boolean algebras is order preserving.};

\item
a \emph{complete homomorphism} if for all $A\subseteq\bool{B}$ and $a\in \bool{B}$ such that $\bigvee_{\bool{B}}A=a$, we have that $\bigvee_{\bool{C}}i[A]=i(a)$;

\item an
\emph{adjoint homomorphism} if it has a left adjoint $\pi_i:\bool{C}\to\bool{B}$.
\end{itemize}
Define $\pi^*_i:\St(\bool{C})\to\St(\bool{B})$ by $\pi^*_i(G)=i^{-1}[G]$.
\end{definition}

\begin{notation}\label{not:kfpii}
Let $(X,\tau)$, $(Y,\sigma)$ be topological spaces and $f:X\to Y$ be a continuous map.
Define
$\bar{\bar{k}}_f:\sigma\to\tau$ by $\bar{\bar{k}}_f(U)=f^{-1}[U]$\footnote{The map $\bar{\bar{k}}_f$ is denoted as $f^{-1}$ in \cite[Section IX.1]{MacLMoer} and as $f^*$ in \cite[Section II.1]{JohnStone}.} .

Let $\bar{k}_f=\bar{\bar{k}}_f\restriction \RO(Y)$ and $k_f=\bar{k}_f\restriction\CLOP(Y)$.
\end{notation}

The continuity of $f$ grants  that the range of $k_f$ is included in $\CLOP(X)$. 
In principle it is not clear that the range of $\bar{k}_f$ is included in $\RO(X)$; we will see that this is the case when $f$ is an open map.

We will use the above to define contravariant functors from (variations of) the category of topological spaces with open maps to (variations of) the category of Boolean algebras with \emph{adjoint} homomorphisms.
First of all we need notation to define the relevant categories and functors.
\begin{notation}
\emph{}

\begin{itemize}
\item $\bool{Top}_{\mathrm{open}}$ denotes the category of topological spaces with continuous open maps.
\item $\bool{TDCH}_{\mathrm{open}}$ denotes the subcategory with the same arrows but with objects given by the $0$-dimensional compact Hausdorff spaces.
\item $\bool{EDCH}_{\mathrm{open}}$ denotes
the subcategory with the same arrows but with objects given by the extremally disconnected compact Hausdorff spaces.
\item $\bool{BA}_{\mathrm{adj}}$ denotes the category of Boolean algebras with arrows given by \emph{adjoint} homomorphisms.
\item $\bool{CBA}$ denotes the category of complete Boolean algebras with arrows given by complete homomorphisms.
\end{itemize}

\begin{align*}
\mathcal{L}^*: & (\bool{Top}_{\mathrm{open}})^\mathrm{op}\to \bool{CBA}\\
& (X,\tau)\mapsto \RO(X)\\
& (f:X\to Y) \mapsto (\bar{k}_f:\RO(Y)\to\RO(X))
\end{align*}

\begin{align*}
\mathcal{D}: & (\bool{TDCH}_{\mathrm{open}})^\mathrm{op}\to \bool{BA}_{\mathrm{adj}}\\
& (X,\tau)\mapsto \CLOP(X)\\
& (f:X\to Y) \mapsto (k_f:\CLOP(Y)\to\CLOP(X))
\end{align*}

\begin{align*}
\mathcal{R}: & (\bool{BA}_{\mathrm{adj}})^\mathrm{op} \to \bool{TDCH}_{\mathrm{open}}\\
& \bool{B} \mapsto \St(\bool{B})\\
& (i:\bool{B}\to \bool{C}) \mapsto (\pi^*_i:\St(\bool{C})\to\St(\bool{B}))
\end{align*}

\end{notation}

The adjunctions/dualities we want to bring forward are summarized in the following theorem:

\begin{theorem}\label{thm:adj-dual-cbaTDCH}
The following holds:
\begin{enumerate}
\item
$\mathcal{D}$, $\mathcal{L}^*$, $\mathcal{R}$ are well defined functors.
\item
$\mathcal{D}$ implements a duality between $\bool{TDCH}_{\mathrm{open}}$ and 
$\bool{BA}_{\mathrm{adj}}$ with inverse given by $\mathcal{R}$.
\item
$\mathcal{D}\restriction\bool{EDCH}_{\mathrm{open}}$ implements a duality between $\bool{EDCH}_{\mathrm{open}}$ and 
$\bool{CBA}$ with inverse given by $\mathcal{R}\restriction\bool{CBA}$.
\item
Let
$\mathcal{L}=\mathcal{L}^*\restriction  \bool{TDCH}_{\mathrm{open}}$.
Then $(\mathcal{L},\mathcal{R}\restriction\bool{CBA})$ is a contravariant adjunction.
\end{enumerate}
\end{theorem}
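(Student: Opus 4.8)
I would treat the four items in order, doing the substantive work in (1)--(3) and obtaining (4) from the machinery built there. For \emph{well-definedness} in (1): for $\mathcal{L}^*$ the only point is that $\bar k_f$ lands in $\RO(X)$ and is a complete homomorphism when $f$ is open and continuous. I would prove $f^{-1}[\Reg{A}]=\Reg{f^{-1}[A]}$ for every open $A\subseteq Y$ using Lemma \ref{lem:charreg}, the fact that the continuous image of a dense subset is dense, and openness of $f$ (so that $f[W]$ is an open neighbourhood of $f(x)$ into which $f$ maps a dense set of preimages); taking $A$ regular open gives $f^{-1}[A]\in\RO(X)$, and taking $A=\bigcup_iU_i$ a union of regular opens shows $\bar k_f$ preserves arbitrary joins. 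Finite meets are preserved trivially, and complements via $f^{-1}[Y\setminus\Cl{U}]=X\setminus\Cl{f^{-1}[U]}$ (again openness). For $\mathcal{D}$, $k_f$ is a homomorphism, and its left adjoint is $C\mapsto f[C]$: this is well defined because a clopen $C$ in the compact space $X$ is compact, hence $f[C]$ is compact, hence closed in the Hausdorff $Y$, and open because $f$ is open; the adjunction $f[C]\subseteq U\iff C\subseteq f^{-1}[U]$ is the usual image/preimage one. Functoriality of $\mathcal{D}$ and $\mathcal{L}^*$ is immediate from $(g\circ f)^{-1}=f^{-1}\circ g^{-1}$.

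For $\mathcal{R}$: $i^{-1}[G]$ is an ultrafilter whenever $i$ is a homomorphism and $G$ is one (primeness and properness pass through $i^{-1}$); $\pi^*_i$ is continuous because $(\pi^*_i)^{-1}[N_b]=N_{i(b)}$; and $\pi^*_i$ is \emph{open} because $\pi^*_i[N_c]=N_{\pi_i(c)}$. For the latter, ``$\subseteq$'' uses the instance $c\le i(\pi_i(c))$ of the adjunction; for ``$\supseteq$'', given an ultrafilter $H$ with $\pi_i(c)\in H$ one checks that the filter generated by $i[H]\cup\{c\}$ on the codomain is proper (if $i(h)\wedge c=0$ then $c\le i(\neg h)$, hence $\pi_i(c)\le\neg h$, contradicting $h,\pi_i(c)\in H$), extends it to an ultrafilter $G\ni c$, and then $H\subseteq i^{-1}[G]$ forces $H=i^{-1}[G]=\pi^*_i(G)$. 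Functoriality of $\mathcal{R}$ uses that left adjoints compose (so $\pi_{i\circ j}=\pi_j\circ\pi_i$) and that $\bool{BA}_{\mathrm{adj}}$, $\bool{CBA}$ are closed under composition. For (2) I would invoke the Stone representation twice: the evaluation map $X\to\St(\CLOP(X))$, $x\mapsto\{U:x\in U\}$, is a homeomorphism and intertwines $f$ with $\pi^*_{k_f}$ (since $\pi^*_{k_f}(\{U:x\in U\})=\{V:f(x)\in V\}$), giving $\mathcal{R}\circ\mathcal{D}\cong\mathrm{Id}$; and $\bool{B}\to\CLOP(\St(\bool{B}))$, $b\mapsto N_b$, is a boolean isomorphism intertwining $i$ with $k_{\pi^*_i}$ (since $k_{\pi^*_i}(N_b)=(\pi^*_i)^{-1}[N_b]=N_{i(b)}$), whence $\mathcal{D}\circ\mathcal{R}\cong\mathrm{Id}$ and the left adjoint of $k_{\pi^*_i}$ is the transport of $\pi_i$; so $\mathcal{D}$ and $\mathcal{R}$ are mutually quasi-inverse, i.e.\ a duality.

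Item (3) is obtained by cutting this duality down. By Fact \ref{comp:bool}, $X$ is extremally disconnected iff $\CLOP(X)=\RO(X)$ iff $\CLOP(X)$ is complete, and $\bool{B}$ is complete iff $\St(\bool{B})$ is extremally disconnected, so the object classes match up. For the arrows one shows that, \emph{between complete boolean algebras, adjoint homomorphisms coincide with complete homomorphisms}: a complete homomorphism preserves all meets (as $\bigwedge A=\neg\bigvee\{\neg a:a\in A\}$), hence has the left adjoint $c\mapsto\bigwedge\{b:i(b)\ge c\}$; conversely any homomorphism with a left adjoint preserves meets (right adjoints preserve infima) and, being a homomorphism, preserves complements, hence preserves joins. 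Thus $\bool{CBA}$ is the full subcategory of $\bool{BA}_{\mathrm{adj}}$ on complete objects, and $\mathcal{D}\restriction\bool{EDCH}_{\mathrm{open}}$, $\mathcal{R}\restriction\bool{CBA}$ are just the restriction of the duality of (2).

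Finally, for (4) I would use that, for $X$ $0$-dimensional, $\RO(X)=\mathcal{L}(X)$ is the boolean completion of $\CLOP(X)=\mathcal{D}(X)$ (the clopen sets being a base), so $\mathcal{L}$ agrees with $\mathcal{D}$ on $\bool{EDCH}_{\mathrm{open}}$ and $\mathcal{L}\circ(\mathcal{R}\restriction\bool{CBA})\cong\mathrm{Id}_{\bool{CBA}}$ via $\RO(\St(\bool{B}))=\CLOP(\St(\bool{B}))\cong\bool{B}$. The adjunction should then be packaged as a natural bijection
\[
\Hom_{\bool{CBA}}(\mathcal{L}X,\bool{B})\cong\Hom_{\bool{TDCH}_{\mathrm{open}}}(\mathcal{R}\bool{B},X),
\]
with forward map $g\mapsto\bar k_g$ (well defined and complete by (1), injective because $g$ is determined by $k_g$), and with the counit the canonical isomorphism above; the unit $\mathcal{R}\mathcal{L}X\to X$ being read off from the dense inclusion $\CLOP(X)\hookrightarrow\RO(X)$ and its Stone dual. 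The crux --- and the step I expect to be the main obstacle --- is identifying exactly which complete homomorphisms $\RO(X)\to\bool{B}$ come from open continuous maps $\St(\bool{B})\to X$, i.e.\ pinning down the inverse of $g\mapsto\bar k_g$: one must analyse a complete homomorphism $h$ through its restriction to $\CLOP(X)$ and through the (stonean) completion process of §\ref{sec:sh}, and then verify naturality in both variables together with the triangle identities, the latter reducing to the density of $\CLOP(X)$ in $\RO(X)$.
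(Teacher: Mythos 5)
Your items (1)--(3) are correct and take essentially the paper's route: the computations you sketch (the identity $f^{-1}[\Reg{A}]=\Reg{f^{-1}[A]}$ and completeness of $\bar{k}_f$, the equivalence ``open map $\leftrightarrow$ adjoint homomorphism'' realized by $c\mapsto\pi^*_i[N_c]$ resp.\ $C\mapsto f[C]$, the Stone isomorphisms $x\mapsto G_x$ and $b\mapsto N_b$ intertwining $f$ with $\pi^*_{k_f}$ and $i$ with $k_{\pi^*_i}$, and the coincidence of adjoint and complete homomorphisms between complete algebras) are exactly what the paper packages into Propositions \ref{prop:openadjpairs} and \ref{thm:openmap-->adjoint} and then cites; your compactness argument for the left adjoint of $k_f$ and your ultrafilter argument for the openness of $\pi^*_i$ are fine.

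The gap is in item (4), and it is not merely the technical loose end you flag. The bijection you propose, $\Hom_{\bool{CBA}}(\mathcal{L}X,\bool{B})\cong\Hom_{\bool{TDCH}_{\mathrm{open}}}(\mathcal{R}\bool{B},X)$ via $g\mapsto\bar{k}_g$, is false in general. Take $X=2^\omega$ (an object of $\bool{TDCH}_{\mathrm{open}}$ which is not extremally disconnected) and $\bool{B}=\RO(X)$. If $g:\St(\RO(X))\to X$ were open continuous with $\bar{k}_g=\mathrm{id}_{\RO(X)}$, then $g^{-1}[U]=N_U$ for every clopen $U$, which forces $g$ to be the canonical projection $\pi_X$ sending $G$ to the unique point all of whose clopen neighbourhoods lie in $G$; but $\pi_X[N_W]=\Cl{W}$ for every $W\in\RO(X)$ (if $W\in G$ every clopen neighbourhood of $\pi_X(G)$ meets $W$; conversely the clopen-neighbourhood filter of any $x\in\Cl{W}$ is compatible with $W$ and extends to some $G\ni W$ with $\pi_X(G)=x$), and $\Cl{W}$ is not open when $W$ is regular open but not clopen. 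Worse, $\Hom_{\bool{TDCH}_{\mathrm{open}}}(\St(\RO(2^\omega)),2^\omega)$ is empty: the image of an open continuous map from this space would be a nonempty clopen subset of $2^\omega$ which is an open continuous image of a compact extremally disconnected space, hence itself extremally disconnected, whereas every nonempty clopen subset of $2^\omega$ is homeomorphic to $2^\omega$. So there is no counit $\mathcal{R}\mathcal{L}X\to X$ of the kind your plan ``reads off from the dense inclusion $\CLOP(X)\hookrightarrow\RO(X)$'', and no natural bijection of the two hom-sets can exist (the left side contains $\mathrm{id}_{\mathcal{L}X}$, the right side is empty); the opposite pairing $\Hom_{\bool{CBA}}(\bool{B},\mathcal{L}X)\cong\Hom_{\bool{TDCH}_{\mathrm{open}}}(X,\mathcal{R}\bool{B})$ fails too, already by cardinality for $\bool{B}$ the four-element algebra and $X=2^\omega$ (continuum many complete homomorphisms $\bool{B}\to\RO(X)$, only countably many continuous maps $X\to\St(\bool{B})$). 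So your packaging of (4) cannot be completed as written; note that the paper itself disposes of (4) only by citing Propositions \ref{prop:openadjpairs} and \ref{thm:openmap-->adjoint}, without exhibiting a unit/counit or a hom-set bijection, so whatever precise sense of ``contravariant adjunction'' is intended there, it is not the one your proof attempts to verify, and the ``main obstacle'' you identify (characterizing which complete homomorphisms $\RO(X)\to\bool{B}$ are of the form $\bar{k}_g$) genuinely has a negative answer.
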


The proof is an easy outcome of the following 
propositions linking the properties of (complete injective) homomorphisms to those of 
(open surjective) continuous functions and of adjoint pairs:\footnote{We expect their content to be folklore; however we have not been able to trace in the literature items  (\ref{prop:openadjpairs-4})  and (\ref{prop:openadjpairs-7}) of Proposition \ref{prop:openadjpairs}, and item (\ref{cor:openmapscomplhomI}) of Proposition \ref{thm:openmap-->adjoint}.}

\begin{proposition}\label{prop:openadjpairs}
	Let $i: \bool{B} \rightarrow \bool{C}$ be an homomorphism of Boolean algebras, and
	$f:X\to Y$ be a continuous function between compact $0$-dimensional Hausdorff spaces.

	Then:
	\begin{enumerate}[(i)]
	\item \label{prop:openadjpairs-1}
	$\pi^*_i$ is continuous and closed;
	\item \label{prop:openadjpairs-2}
	$i$ is injective if and only if $\pi^*_i$ is surjective, in which case $\pi^*_i[N_{i(b)}]=N_b$; 
	\item \label{prop:openadjpairs-3}
	$i=k_{\pi^*_i}$ modulo the identification of $\bool{D}$ with $\CLOP(\St(\bool{D}))$ given by 
	$b\mapsto N_b$ for $\bool{D}$ a Boolean algebra;
	\item \label{prop:openadjpairs-4}
	$i$ has a left adjoint $\pi_i$ if and only if $\pi^*_i$ is an open map, in which case $N_{\pi_i(c)}=\pi^*_i[N_c]$, and $i$ is a complete homomorphism;
	\item \label{prop:openadjpairs-5}
	$k_f$ is a homomorphism;
	\item \label{prop:openadjpairs-6}
	$\pi^*_{k_f}=f$ modulo the identification of $Z$ with $\St(\CLOP(Z))$ given by 
	\[
	z\mapsto G_z=\bp{U\in\CLOP(Z): z\in U}
	\] 
	for $Z$ compact $0$-dimensional Hausdorff space;
	\item  \label{prop:openadjpairs-7}
	 $k_f$ has a left adjoint $\pi_f$ if and only if $f$ is an open map, in which case 
	 $N_{\pi_f(c)}=f[N_c]$, and $k_f$ is a complete homomorphism.
	 \end{enumerate}
	 \end{proposition}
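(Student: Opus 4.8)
### Proof plan

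The plan is to prove the seven items of Proposition \ref{prop:openadjpairs} in a dependency-respecting order, leaning heavily on the Stone duality dictionary (every boolean algebra $\bool{D}$ is $\CLOP(\St(\bool{D}))$, every continuous map between Stone spaces is of the form $\pi^*_i$ for a unique $i$). First I would handle the "easy half" of the correspondence: items (\ref{prop:openadjpairs-1}), (\ref{prop:openadjpairs-2}), (\ref{prop:openadjpairs-3}), (\ref{prop:openadjpairs-5}), (\ref{prop:openadjpairs-6}). For (\ref{prop:openadjpairs-1}), $\pi^*_i(G)=i^{-1}[G]$ is a genuine ultrafilter on $\bool{B}$ since $i$ is a homomorphism, and $(\pi^*_i)^{-1}[N_b]=N_{i(b)}$ shows continuity; closedness is automatic because a continuous map from a compact space to a Hausdorff space is closed. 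For (\ref{prop:openadjpairs-2}): if $i$ is injective then $N_{i(b)}\ne\emptyset$ whenever $b\ne 0$, so by a standard ultrafilter extension argument every ultrafilter on $\bool{B}$ is $i^{-1}[G]$ for some $G$; conversely if $i(b)=i(b')$ with $b\ne b'$ then $N_b\ne N_{b'}$ but $\pi^*_i$ would have to send the same points into both, contradicting surjectivity onto the Stone space — and the formula $\pi^*_i[N_{i(b)}]=N_b$ falls out of the computation $G\in N_{i(b)}\iff i(b)\in G\iff b\in i^{-1}[G]$. Item (\ref{prop:openadjpairs-3}) is the identity $k_{\pi^*_i}(N_b)=(\pi^*_i)^{-1}[N_b]=N_{i(b)}$, i.e. $i$ read through the identification. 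Items (\ref{prop:openadjpairs-5}) and (\ref{prop:openadjpairs-6}) are the symmetric statements: $k_f(U)=f^{-1}[U]$ is clopen by continuity and preserves boolean operations because preimage does, and $\pi^*_{k_f}(G_z)=k_f^{-1}[G_z]=\{U: z\in f^{-1}[U]\}=\{U: f(z)\in U\}=G_{f(z)}$.

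The substance is in items (\ref{prop:openadjpairs-4}) and (\ref{prop:openadjpairs-7}), and by (\ref{prop:openadjpairs-6}) the two are literally the same statement transported along Stone duality, so it suffices to prove (\ref{prop:openadjpairs-4}). The key step is: $\pi^*_i$ is open $\iff$ $i$ has a left adjoint. For the forward direction, assume $\pi^*_i$ is open. Then for $c\in\bool{C}$ the set $\pi^*_i[N_c]$ is open in $\St(\bool{B})$; it is also closed by (\ref{prop:openadjpairs-1}), hence clopen, hence equal to $N_b$ for a unique $b\in\bool{B}$, which we define to be $\pi_i(c)$. One then checks $\pi_i$ is order preserving and that $(\pi_i,i)$ is an adjoint pair: $i(b)\ge c\iff N_{i(b)}\supseteq N_c\iff (\pi^*_i)^{-1}[N_b]\supseteq N_c\iff N_b\supseteq\pi^*_i[N_c]=N_{\pi_i(c)}\iff b\ge\pi_i(c)$, where the middle equivalence uses that $\pi^*_i$ is surjective onto $N_b$ when restricted to $(\pi^*_i)^{-1}[N_b]$ — actually one needs here only $\pi^*_i[(\pi^*_i)^{-1}[N_b]] = N_b \cap \ran(\pi^*_i)$ together with the fact that for the relevant $c$'s $N_c\subseteq\ran(\pi^*_i)$ is not needed; more carefully, $N_{i(b)}\supseteq N_c$ exactly says $N_c\subseteq(\pi^*_i)^{-1}[N_b]$, which given $\pi^*_i[N_c]$ clopen is equivalent to $\pi^*_i[N_c]\subseteq N_b$. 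For the converse, assume $i$ has left adjoint $\pi_i$; I would show $\pi^*_i[N_c]=N_{\pi_i(c)}$ by proving both inclusions. That $\pi^*_i[N_c]\subseteq N_{\pi_i(c)}$: if $G\in N_c$ then $c\in G$, and since $i(\pi_i(c))\ge c$ (adjunction counit) we get $i(\pi_i(c))\in G$, so $\pi_i(c)\in i^{-1}[G]=\pi^*_i(G)$. The reverse inclusion $N_{\pi_i(c)}\subseteq\pi^*_i[N_c]$ requires showing that every ultrafilter $H\ni\pi_i(c)$ is of the form $i^{-1}[G]$ with $c\in G$: here one uses the adjunction characterization from Fact \ref{prop:charadjpair}, namely $\pi_i(c)=\inf\{b: i(b)\ge c\}$, to argue that the filter on $\bool{C}$ generated by $i[H]\cup\{c\}$ is proper (if $i(b)\wedge c=0$ for some $b\in H$ then $i(b)\le\neg c$... one needs to derive $\pi_i(c)\wedge b = 0$, contradicting $b,\pi_i(c)\in H$; this is where the precise form of the adjunction is essential), then extend it to an ultrafilter $G$ and check $i^{-1}[G]=H$ using maximality of $H$. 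Having established $N_{\pi_i(c)}=\pi^*_i[N_c]$ in both directions, openness of $\pi^*_i$ on basic clopens follows, hence on all opens. Finally, completeness of $i$: given $\bigvee_{\bool{B}}A=a$, one shows $\bigvee_{\bool{C}}i[A]=i(a)$ by noting $i$ is order preserving (so $i(a)$ is an upper bound) and if $d\ge i(b)$ for all $b\in A$ then $\pi_i(d)\ge b$ for all $b\in A$ by adjunction, so $\pi_i(d)\ge a$, so $d\ge i(a)$ again by adjunction — this is the standard "left adjoints... preserve colimits" argument, here: a map with a left adjoint preserves all infs, and dually by considering the adjunction one gets preservation of the relevant sups.

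I expect the main obstacle to be the reverse inclusion $N_{\pi_i(c)}\subseteq\pi^*_i[N_c]$ in the converse half of (\ref{prop:openadjpairs-4}) — equivalently, the surjectivity-type argument showing that an abstract ultrafilter containing $\pi_i(c)$ actually comes from an ultrafilter on $\bool{C}$ containing $c$ via $i^{-1}$. This is the point where one genuinely needs the formula $\pi_i(c)=\inf\{b\in\bool{B}: i(b)\ge c\}$ from Fact \ref{prop:charadjpair} rather than just the abstract adjunction, and where a naive compactness argument does not immediately close: the set $\{N_{i(b)}: b\in H\}\cup\{N_c\}$ has the finite intersection property precisely because $b\in H$ forces $\pi_i(c)\wedge b>0$, and one must convert $\pi_i(c)\wedge b>0$ into $i(b)\wedge c>0$ using that $\pi_i(c)\wedge b\le\pi_i(c)$ would-be-below-$\pi_i$-of-something; the cleanest route is probably: if $i(b)\wedge c=0$ then $c\le\neg i(b)=i(\neg b)$, so $\neg b\in\{b':i(b')\ge c\}$, hence $\pi_i(c)\le\neg b$, i.e. $\pi_i(c)\wedge b=0$, contradicting $b,\pi_i(c)\in H$. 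Once that FIP is in hand, extend to an ultrafilter $G\supseteq\{i(b):b\in H\}\cup\{c\}$; then $i^{-1}[G]\supseteq H$, and by maximality of $H$, $i^{-1}[G]=H$, with $c\in G$ as required. Everything else is bookkeeping with the Stone duality identifications, which I would carry out compactly rather than verbosely.
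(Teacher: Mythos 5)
Your overall route matches the paper's: items (\ref{prop:openadjpairs-1})--(\ref{prop:openadjpairs-3}), (\ref{prop:openadjpairs-5}), (\ref{prop:openadjpairs-6}) are the standard Stone-duality bookkeeping, the crux of (\ref{prop:openadjpairs-4}) is exactly the chain $i(b)\geq c\iff N_{i(b)}\supseteq N_c\iff (\pi^*_i)^{-1}[N_b]\supseteq N_c\iff N_b\supseteq \pi^*_i[N_c]$, and (\ref{prop:openadjpairs-7}) is (\ref{prop:openadjpairs-4}) transported along (\ref{prop:openadjpairs-6}) (the paper instead quotes Proposition \ref{thm:openmap-->adjoint}, which comes to the same thing). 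Your converse half of (\ref{prop:openadjpairs-4}), the FIP/ultrafilter-extension argument giving $N_{\pi_i(c)}\subseteq\pi^*_i[N_c]$, is correct and more detailed than what the paper records; an alternative way to close that inclusion is to note that $\pi^*_i[N_c]$ is closed, hence equals the intersection of its clopen supersets, which by the displayed chain are exactly the $N_b$ with $b\geq\pi_i(c)$, and that intersection is $N_{\pi_i(c)}$.

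The one genuine flaw is the completeness step at the end of (\ref{prop:openadjpairs-4}). You write: ``if $d\geq i(b)$ for all $b\in A$ then $\pi_i(d)\geq b$ for all $b\in A$ by adjunction, so $\pi_i(d)\geq a$, so $d\geq i(a)$ again by adjunction.'' This uses the adjunction in the wrong direction: with the paper's convention, $(\pi_i,i)$ satisfies $i(p)\geq q\iff\pi_i(q)\leq p$, i.e.\ $\pi_i$ is a \emph{left} adjoint, so $d\geq i(b)$ yields no lower bound on $\pi_i(d)$. Concretely, take $\bool{B}=\pow{\bp{0,1}}$, $\bool{C}$ the two-element algebra, $i(A)=1$ iff $0\in A$; then $\pi_i(0_{\bool{C}})=\emptyset$, $\pi_i(1_{\bool{C}})=\bp{0}$ is a left adjoint, yet $d=0_{\bool{C}}\geq i(\bp{1})=0_{\bool{C}}$ while $\pi_i(d)=\emptyset\not\geq\bp{1}$. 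A left adjoint makes $i$ preserve existing \emph{infima}; to get suprema you must also use that $i$ is a boolean homomorphism and pass to complements: $d\geq i(b)$ for all $b\in A$ iff $\neg d\leq i(\neg b)$ for all $b\in A$, iff $\pi_i(\neg d)\leq\neg b$ for all $b\in A$, iff $\pi_i(\neg d)\leq\neg a$, iff $\neg d\leq i(\neg a)$, iff $d\geq i(a)$. (Alternatively, argue as the paper does: since $\pi^*_i$ is open, $\bar{k}_{\pi^*_i}$ is a complete homomorphism of the regular-open algebras by Proposition \ref{thm:openmap-->adjoint}, it restricts to $i$ on the clopen algebras, and the dense embedding of $\CLOP$ into $\RO$ preserves the suprema that exist.) With this local repair your argument is complete.
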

	 
	 Furthermore the above can be meshed with the following
	 \begin{proposition} \label{thm:openmap-->adjoint}
	 Let $X,Y$ be arbitrary topological spaces and
	  $f:X\to Y$ be an open continuous map, we have that
	 \begin{enumerate}[(A)]
	 \item\label{cor:openmapscomplhomI}  
	 $\Reg{f^{-1}[U]}=f^{-1}[\Reg{U}]$ for any open set $U$ of $Y$;
	 \item\label{cor:openmapscomplhomII} 
	 $\bar{k}_f:\RO(Y)\to\RO(X)$ given by $U\mapsto f^{-1}[U]$ is a complete homomorphism
	 with right adjoint $\bar{\pi}_f:\RO(X)\to\RO(Y)$ given by $V\mapsto f[V]$\footnote{An analogous result for $\bar{\bar{k}}_f:\mathcal{O}(Y)^+\to\mathcal{O}(X)^+$ is \cite[Chapter C3 Lemma 3.1.10]{JohnElephant}.};
	 \item \label{cor:openmapscomplhomIII} 
	 In case $X,Y$ are $0$-dimensional compact Hausdorff $\bar{k}_f\restriction\CLOP(Y)=k_f$
	 and $\bar{\pi}_f\restriction\CLOP(X)$ is the left adjoint of $k_f$;
	 \item \label{cor:openmapscomplhomIV} 
	 In case $X,Y$ are extremally disconnected compact Hausdorff $\bar{k}_f=k_f$.
	 \end{enumerate}
	 \end{proposition}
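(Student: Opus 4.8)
The plan is to establish (\ref{cor:openmapscomplhomI}) first, since it is the geometric heart of the statement, and then derive (\ref{cor:openmapscomplhomII}) formally, with (\ref{cor:openmapscomplhomIII}) and (\ref{cor:openmapscomplhomIV}) following by restriction together with the identifications already recorded in the excerpt. For (\ref{cor:openmapscomplhomI}), fix an open set $U\subseteq Y$. The inclusion $f^{-1}[U]\subseteq f^{-1}[\Reg{U}]$ is clear since $U\subseteq\Reg{U}$, but to get the regularization of the left side I would use Lemma \ref{lem:charreg}: a point $a\in X$ lies in $\Reg{f^{-1}[U]}$ iff some open neighborhood $W$ of $a$ has $f^{-1}[U]\cap W$ dense in $W$. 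Using continuity of $f$ one sees that $f^{-1}[\Reg{U}]$ is open and contains $f^{-1}[U]$ as a dense subset on the relevant pieces; conversely, using \emph{openness} of $f$, if $a\in f^{-1}[\Reg{U}]$ then $f(a)$ has an open neighborhood $V$ in which $U$ is dense, and $f^{-1}[V]$ is an open neighborhood of $a$ in which $f^{-1}[U]$ is dense (density pulls back along open continuous maps, by the remark following Lemma \ref{lem:charreg} applied to preimages). Hence $a\in\Reg{f^{-1}[U]}$. This double inclusion gives $\Reg{f^{-1}[U]}=f^{-1}[\Reg{U}]$.

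For (\ref{cor:openmapscomplhomII}), note first that (\ref{cor:openmapscomplhomI}) shows $\bar{k}_f$ does land in $\RO(X)$: for $U\in\RO(Y)$ we have $f^{-1}[U]=f^{-1}[\Reg{U}]=\Reg{f^{-1}[U]}$, so $f^{-1}[U]$ is regular open. That $\bar{k}_f$ is a boolean homomorphism is a routine check: it preserves $\emptyset$, $X$, and finite meets on the nose since $U\wedge V=U\cap V$ in $\RO$, and it preserves complements because $f^{-1}[X\setminus\Cl{U}]=X\setminus\Cl{f^{-1}[U]}$, which again follows from (\ref{cor:openmapscomplhomI}) applied to $\neg U$ (or directly from continuity and openness). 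For completeness of $\bar{k}_f$, given $\{U_i:i\in I\}\subseteq\RO(Y)$ with join $\Reg{\bigcup_i U_i}$, we must show $f^{-1}[\Reg{\bigcup_i U_i}]=\Reg{\bigcup_i f^{-1}[U_i]}$; by (\ref{cor:openmapscomplhomI}) the left side is $\Reg{f^{-1}[\bigcup_i U_i]}=\Reg{\bigcup_i f^{-1}[U_i]}$, as desired. Finally, to see that $\bar{\pi}_f:V\mapsto f[V]$ is the right adjoint, I first observe $f[V]$ is open (since $f$ is open) and check it is regular open using (\ref{cor:openmapscomplhomI}) or a direct density argument; then by Fact \ref{prop:charadjpair} it suffices to verify $f^{-1}[U]\le V$ iff $U\le f[V]$ for $U\in\RO(Y)$, $V\in\RO(X)$ — equivalently $f^{-1}[U]\subseteq V$ iff $U\subseteq f[V]$ as regular open sets, i.e.\ modulo nowhere dense discrepancies. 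One direction is immediate from $V\subseteq f^{-1}[f[V]]$ and monotonicity; the other uses surjectivity of $f$ onto $\Cl{\ran f}$ together with regularity to discard the boundary. Since $\bar k_f$ has a right adjoint, it is in particular a complete homomorphism, reconfirming the above.

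For (\ref{cor:openmapscomplhomIII}), when $X,Y$ are $0$-dimensional compact Hausdorff, $\bar{k}_f\restriction\CLOP(Y)=k_f$ holds by the very definition of $k_f$ in Notation \ref{not:kfpii}, using that $\CLOP\subseteq\RO$ and that $f^{-1}$ of a clopen set is clopen by continuity; that $\bar{\pi}_f\restriction\CLOP(X)$ is the left adjoint of $k_f$ follows because $f[V]$ is clopen when $V$ is clopen (image of a clopen set under an open map with compact domain is open, and its closedness follows from compactness of $V$ and Hausdorffness of $Y$), and the adjunction identity restricts. For (\ref{cor:openmapscomplhomIV}), if moreover $X,Y$ are extremally disconnected then $\CLOP(X)=\RO(X)$ and $\CLOP(Y)=\RO(Y)$ by Fact \ref{comp:bool}, so $\bar{k}_f$ and $k_f$ have the same domain and codomain and agree, giving $\bar{k}_f=k_f$.

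The main obstacle I expect is the boundary bookkeeping in (\ref{cor:openmapscomplhomI}) and in verifying that $f[V]$ is genuinely regular open and that the adjunction inequality holds on the nose rather than merely up to nowhere dense sets: openness of $f$ is exactly what makes density pull back and push forward correctly, but one has to be careful that $f$ need not be surjective, so the cleanest route is probably to first reduce to the case $\ran f$ dense in $Y$ (replacing $Y$ by $\Cl{\ran f}$, noting $\RO$ is unchanged under passing to a dense subspace by the Fact following Lemma \ref{lem:charreg}) and then exploit the remark that the image of a dense set under an open continuous map is dense.
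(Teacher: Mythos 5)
There is a genuine gap in your treatment of (\ref{cor:openmapscomplhomI}), which is the load-bearing item. Your ``conversely'' clause correctly proves the inclusion $f^{-1}[\Reg{U}]\subseteq\Reg{f^{-1}[U]}$: pulling back the witnessing neighborhood $V$ of $f(a)$, the density of $f^{-1}[U]$ in $f^{-1}[V]$ holds because for any non-empty open $A\subseteq f^{-1}[V]$ the set $f[A]$ is a non-empty open subset of $V$, hence meets $U$ (note this already uses \emph{openness}; the remark after Lemma \ref{lem:charreg} is about pushing density forward along dense continuous maps, not pulling it back). But the reverse inclusion $\Reg{f^{-1}[U]}\subseteq f^{-1}[\Reg{U}]$ is never established: the sentence ``using continuity of $f$ one sees that $f^{-1}[\Reg{U}]$ is open and contains $f^{-1}[U]$ as a dense subset'' is (i) not a consequence of continuity alone, and (ii) even if granted, via Lemma \ref{lem:charreg} it yields once more $f^{-1}[\Reg{U}]\subseteq\Reg{f^{-1}[U]}$, i.e.\ the inclusion you already have, not the one you still need. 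That missing direction genuinely requires openness: for $f(x)=|x|$ on $\mathbb{R}$ (continuous, not open) and $U=(0,1)$ one has $\Reg{f^{-1}[U]}=(-1,1)$ while $f^{-1}[\Reg{U}]=(-1,0)\cup(0,1)$. The paper's argument for this direction, which you need in some form, is the push-forward one: if $x\in\Reg{f^{-1}[U]}$ with witnessing neighborhood $W$ (so $f^{-1}[U]\cap W$ is dense in $W$), then $f[W]$ is an open neighborhood of $f(x)$ by openness, and $U\cap f[W]$ is dense in $f[W]$ because every non-empty open $Z\subseteq f[W]$ gives the non-empty open set $f^{-1}[Z]\cap W$, which meets $f^{-1}[U]$, whence $Z$ meets $U$; therefore $f(x)\in\Reg{U}$.

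Apart from this, your route is essentially the paper's: derive (\ref{cor:openmapscomplhomII}) from (\ref{cor:openmapscomplhomI}) (the paper verifies order- and suprema-preservation instead of complementation, but either works), and get (\ref{cor:openmapscomplhomIII}) and (\ref{cor:openmapscomplhomIV}) by restriction, using that $f[V]$ is clopen for clopen $V$ by compactness and Hausdorffness. Two smaller cautions. First, the adjunction equivalence is the purely set-theoretic ``$V\subseteq f^{-1}[U]$ iff $f[V]\subseteq U$'', so no reduction to dense range and no ``discarding of boundaries'' is needed. Second, your plan to ``check that $f[V]$ is regular open'' for every $V\in\RO(X)$ cannot succeed in general: projecting the union of two open unit squares meeting at a single corner onto a coordinate axis gives a regular open $V$ with $f[V]$ open but not regular open. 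The adjoint of $\bar{k}_f$ is really $V\mapsto\Reg{f[V]}$; this causes no harm, since a regular open $U$ contains $f[V]$ if and only if it contains $\Reg{f[V]}$, and in the clopen setting of (\ref{cor:openmapscomplhomIII}) one has $\Reg{f[V]}=f[V]$.
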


\begin{remark}
Let $\bool{B}$ be the Boolean algebra given by finite and cofinite subsets of $\omega$ and
$i:\bool{B}\to\pow{\omega}$ be the inclusion map. It can be checked that $i$ is a complete injective 
homomorphism
and $\pi^*_i$ is not an open map\footnote{This counterexample has been given by D. Monk.}.
On the other hand if $i:\bool{B}\to\bool{C}$ is an homomorphism between complete Boolean algebras, 
$i$ has an adjoint if and only if $i$ is complete.
In particular the notion of complete homomorphisms is weaker than that of 
adjoint homomorphism only when considering non-complete Boolean algebras. 
\end{remark}

First of all we prove Theorem \ref{thm:adj-dual-cbaTDCH} assuming the two propositions:
\begin{proof}
The fact that $\mathcal{L}^*$ is a well defined functor follows from \ref{cor:openmapscomplhomI} of
Proposition \ref{thm:openmap-->adjoint}.
 $\mathcal{D}$ is a duality by Stone duality and by (\ref{prop:openadjpairs-3}) and (\ref{prop:openadjpairs-4}) of Proposition \ref{prop:openadjpairs}. $\mathcal{D}\restriction\bool{EDCH}$ is also a duality by
the known result that the Stone duality maps the object of $\bool{EDCH}$ onto those of $\bool{CBA}$
bijectively up to isomorphism, by (\ref{prop:openadjpairs-3}) and (\ref{prop:openadjpairs-4}) of Proposition \ref{prop:openadjpairs}, and by
Proposition \ref{thm:openmap-->adjoint}. $(\mathcal{L},\mathcal{R}\restriction\bool{CBA})$ is an adjunction by Proposition \ref{thm:openmap-->adjoint} and by (\ref{prop:openadjpairs-3}) and (\ref{prop:openadjpairs-4}) of Proposition \ref{prop:openadjpairs}.
\end{proof}
We start with the proof of Proposition \ref{thm:openmap-->adjoint}.
\begin{proof}
\emph{}

\begin{description}
\item[(\ref{cor:openmapscomplhomI})]
Assume $U$ is open and $x\in \Reg{f^{-1}[U]}$. Then (by Lemma \ref{lem:charreg}) $x$ has an open neighborhood $V$ such that $V\cap f^{-1}[U]$ is dense in $V$. Since $f$ is open, $f[V]$ is an open neighborhood of $f(x)$. It is enough to show that
$U\cap f[V]$ is dense in $f[V]$ (again by Lemma \ref{lem:charreg}). Pick $Z$ open subset of $f[V]$. Then $f^{-1}[Z]$ is an open subset of $V$\footnote{Equivalently, by \cite[Lemma 1.5.3, Chapter C1]{JohnElephant} we have that $U\cap f[V]=f[V\cap f^{-1}[U]]$, and thus $U\cap f[V]$ is dense in $f[V]$ since $V\cap f^{-1}[U]$ is dense in $V$.},
hence it has non empty intersection with $f^{-1}[U]$. If $z$ is in this latter set, $f(z)$ is in $Z\cap U$. 

Conversely assume $x\in f^{-1}[\Reg{U}]$. Then (by Lemma \ref{lem:charreg}) $f(x)$ has an open neighborhood $V$ such that
$U\cap V$ is dense in $V$. It is enough to show that
$f^{-1}[U]\cap f^{-1}[V]$ is dense in $f^{-1}[V]$ (again by Lemma \ref{lem:charreg}). Pick $A$ non-empty  open subset of $f^{-1}[V]$.
Then $f[A]$ is a non-empty open subset of $V$, hence it has non-empty intersection with $U$. This easily gives
that $A\cap f^{-1}[U]$ is non-empty.

\item[(\ref{cor:openmapscomplhomII})]
By (\ref{cor:openmapscomplhomI}) we get that for $U$ regular open subset of $Y$
\[
\Reg{f^{-1}[U]}=f^{-1}[\Reg{U}]=f^{-1}[U],
\]
hence $f^{-1}[U]$ is a regular open subset of $X$ and $\bar{k}_f$ is well defined.

It is immediate to check that $\bar{k}_f$ is order preserving.

Furthermore
if $\bp{A_i:i\in I}$ is a family of regular open subsets of $Y$ and $A=\bigcup_{i\in I}A_i$, we have that
\begin{align*}
\bigvee_{\RO(X)}\bp{\bar{k}_f(A_i):i\in I}=\Reg{\bigcup_{i\in I}f^{-1}[A_i]}=\Reg{f^{-1}[\bigcup_{i\in I}A_i]}=
\Reg{f^{-1}[A]}=\\
=f^{-1}[\Reg{A}]=f^{-1}[\bigvee_{\RO(Y)}\bp{A_i:i\in I}]=\bar{k}_f(\bigvee_{\RO(Y)}\bp{A_i:i\in I}),
\end{align*}
hence $\bar{k}_f$ is a complete morphism of partial orders.

It is well known that a map $k:\bool{B}\to\bool{C}$ is a complete homomorphism of complete Boolean algebras if and only if $k$ is suprema and order preserving. Hence $\bar{k}_f$ is a complete homomorphism.

It is also immediate to check that for $U\in\RO(Y)$ and $V\in\RO(X)$,
$\bar{k}_f(U)=f^{-1}[U]\supseteq V$ if and only if $U\supseteq f[V]=\bar{\pi}_f(V)$, hence 
$(\bar{\pi}_f, \bar{k}_f)$ is an adjoint pair.
\item[(\ref{cor:openmapscomplhomIII}) and (\ref{cor:openmapscomplhomIV})] Trivial.
\end{description}
\end{proof}

We now prove Proposition \ref{prop:openadjpairs}:

\begin{proof}
\emph{}

\begin{description}
\item[(\ref{prop:openadjpairs-1}), (\ref{prop:openadjpairs-2}), (\ref{prop:openadjpairs-3})]
This is part of the standard proofs of Stone duality. 

\item[(\ref{prop:openadjpairs-4})]
If $\pi^*_i$ is open we have that $\pi^*_i[N_c]$ is clopen for all $c\in\bool{C}$. 
Furthermore for all $b\in\bool{B}$ and $c\in\bool{C}$
\[
N_{i(b)}\supseteq N_c\text{ if and only if }(\pi^*_i)^{-1}[N_b]\supseteq N_c
\text{ if and only if } N_b\supseteq \pi^*_i[N_c].
\] 
Hence
the map $\pi_i:N_c\mapsto \pi^*_i[N_c]$ is the left adjoint of $k_{\pi^*_i}$, or equivalently (letting 
$\pi_i(c)=\bigwedge_{\bool{B}}\bp{b:i(b)\geq c}$) $\pi_i$ is well defined and is the right adjoint of $i$.

For the completeness of $\pi_i$, apply  (\ref{cor:openmapscomplhomIII}) of  Proposition \ref{thm:openmap-->adjoint}, since infima and suprema in $\CLOP(X)$ are computed using the same operations of $\RO(X)$ (when they exist).

\item[(\ref{prop:openadjpairs-5})]
Trivial by continuity of $f$.

\item[(\ref{prop:openadjpairs-6})]
See (\ref{prop:openadjpairs-3}) or else:
\begin{align*}
\pi^*_{k_f}(G_x)=\bp{V\in\CLOP(Y): k_f(V)\in G_x}=\bp{V\in\CLOP(Y): f^{-1}[V]\in G_x}=\\
=\bp{V\in\CLOP(Y): x\in f^{-1}[V]}=\bp{V\in\CLOP(Y): f(x)\in V}=G_{f(x)}.
\end{align*}

\item[(\ref{prop:openadjpairs-7})]
Use (\ref{cor:openmapscomplhomIII}) of  Proposition \ref{thm:openmap-->adjoint}.

\end{description}
\end{proof}

\section{Boolean valued models}\label{sec:boolvalmod}
We consider only Boolean valued models for relational languages as (at least for our purposes) there is no lack of generality in doing so, and our proofs are notationally smoother in this set up.
\begin{definition}\label{def:boolvalmod}
Let $\LL=\bp{R_i: i\in I, c_j: j\in J}$ be a relational language (i.e. without functional symbols; constant symbols are allowed) and let $\bool{B}$ be a Boolean algebra. A \emph{$\bool{B}$-valued model} $\M$ for $\LL$ consists of:
\begin{enumerate}
\item a non-empty set $M$, called the \emph{domain} of $\M$;
\item the Boolean value of the equality symbol, which is a function
\[
\begin{aligned}
=^\M:&M^2\to\bool{B}\\
(\sigma,& \tau)\mapsto\Qp{\sigma=\tau}^\M_{\bool{B}}
\end{aligned};
\]
\item for each $n$-ary relational symbol $R\in\LL$, a function
\[
\begin{aligned}
R^\M:&M^n\to\bool{B}\\
(\sigma_1, \dots,& \sigma_n)\mapsto\Qp{R(\sigma_1, \dots, \sigma_n)}^\M_{\bool{B}}
\end{aligned};
\]
\item for each constant symbol $c\in\LL$, an element $c^\M\in M$.
\end{enumerate}
We require the following conditions to hold:
\begin{enumerate}
\item for every $\sigma, \tau, \pi\in M$,
\[
\begin{gathered}
\Qp{\sigma=\sigma}^\M_{\bool{B}}=1_{\bool{B}},\\
\Qp{\sigma=\tau}^\M_{\bool{B}}=\Qp{\tau=\sigma}^\M_{\bool{B}},\\
\Qp{\sigma=\tau}^\M_{\bool{B}}\wedge\Qp{\tau=\pi}^\M_{\bool{B}}\leq\Qp{\sigma=\pi}^\M_{\bool{B}};
\end{gathered}
\]
\item for every $n$-ary relational symbol $R\in\LL$ and for every $\sigma_1, \dots, \sigma_n, \tau_1, \dots, \tau_n\in M$,
\[
\Bigl(\bigwedge_{i=1}^n\Qp{\sigma_i=\tau_i}^\M_{\bool{B}}\Bigr)\wedge\Qp{R(\sigma_1, \dots, \sigma_n)}^\M_{\bool{B}}\leq\Qp{R(\tau_1, \dots, \tau_n)}^\M_{\bool{B}}.
\]
\end{enumerate}
A $\bool{B}$-valued model $\M$ is \emph{extensional} if $\Qp{\sigma=\tau}^\M_{\bool{B}}=1_{\bool{B}}$ entails $\sigma=\tau$ for all $\sigma, \tau\in M$.
\end{definition}
\begin{remark}
\label{rem:transeq}
There is a key difference between the above definition of Boolean valued models and another which is present in the category theoretic literature (see for example Monro's  \cite{MONRO86}): the category theoretic definition allows the existence of "conditional elements"  i.e. $\Qp{\sigma=\sigma}\neq 1_\bool{B}$ is possible for 
some $\sigma\in M$ (the \emph{global elements} are those satisfying the equality). This latter choice is natural from a categorical perspective, since it ensures an exact correspondence between this notion of $\bool{B}$-valued model and the categorical notion of  presheaves on $\bool{B}$. This "category theoretic" definition is furthermore coherent with the standard definition of Heyting valued model (as given for instance in \cite{FOURSCOTT}). Definition \ref{def:boolvalmod} is natural from a model theoretic/set theoretic perspective, as one should naturally expect that elements do not exist conditionally; it is in any case motivated by the practice of (the Boolean valued presentation of) forcing in set theory. In order to establish a natural correspondence with presheaves for the  notion of Boolean valued model as given in Def. \ref{def:boolvalmod} we will have to restrict slightly the class of presheaves of interest for us. 
\end{remark}
If no confusion can arise, we avoid the supscript $\M$ and the subscript $\bool{B}$. Moreover, we write equivalently $\sigma\in M$ or $\sigma\in\M$, identifying a Boolean valued model $\M$ with its underlying set $M$.

We now introduce the notion of Boolean morphism between Boolean valued models:
\begin{definition}\label{def:boolvalmorph}
Fix a language $\LL$. Let $\M_1$ be a $\bool{B}_1$-valued model for $\LL$ and $\M_2$ be a $\bool{B}_2$-valued model for $\LL$, where $\bool{B}_1$ and $\bool{B}_2$ are Boolean algebras. A \emph{morphism of Boolean valued models for $\LL$} is a pair $(\Phi, i)$ where:
\begin{itemize}
\item $i: \bool{B}_1\to \bool{B}_2$ is an adjoint morphism of Boolean algebras; 
\item $\Phi:M_1\to M_2$ is an \emph{$i$-morphism}, that is: for every $n$-ary relational symbol $R$ and every constant symbol $c$ in the language and for every 
$\tau_1,\dots, \tau_n\in M_1$, 
\[\Phi(c^{\M_1})=c^{\M_2},\]
\[ i(\llbracket R(\tau_1, \dots, \tau_n)\rrbracket^{\M_1}_{\bool{B}_1})\leq 
\llbracket R(\Phi(\tau_1), \dots, \Phi(\tau_n))\rrbracket^{\M_2}_{\bool{B}_2},\] 
\[ i(\llbracket \tau_1=\tau_2\rrbracket^{\M_1}_{\bool{B}_1})\leq \llbracket\Phi(\tau_1)=\Phi(\tau_2)\rrbracket^{\M_2}_{\bool{B}_2}.\]
If in both these equations equality holds, we call $\Phi$ an \emph{$i$-embedding}.\end{itemize}
If $i$ is an isomorphism, $\Phi$ is an $i$-embedding, and for all $\tau\in M_2$ there is $\sigma\in M_1$ such that 
$\Qp{\Phi(\sigma)=\tau}_{\bool{B}_2}=1_{\bool{B}_2}$, $\cp{\Phi,i}$ is an \emph{equivalence of Boolean valued models}.
\end{definition}
\begin{remark}
First of all notice that, if $(\Phi, i):(\M_1, \bool{B}_1)\to(\M_2, \bool{B}_2)$ is an isomorphism (which means in particular that both $\Phi$ and $i$ are invertible maps), then $(\Phi, i)$ is an equivalence of Boolean valued models. If both $\M_1$ and $\M_2$ are extensional, the converse holds.\\
Moreover, if $\bool{B}_1=\bool{B}_2=2$ and both $\M_1$ and $\M_2$ are extensional, an $\Id_2$-morphism and an $\Id_2$-embedding are exactly a morphism and an embedding between two
Tarski models, respectively.
\end{remark}
It is worth noting a further istinction between the above notion of morphism and that of Monro's \cite{MONRO86}: there he considers a morphism to be an equivalence class of such functions, where $\Phi$ and $\Psi$ are equivalent whenever $i(\llbracket\tau=\tau\rrbracket)\leq\llbracket\Phi(\tau)=\Psi(\tau)\rrbracket$\footnote{Remember that Monro does not require $\llbracket\tau=\tau\rrbracket=1_{\bool{B}}$.} for every $\tau$ in the first model (see \cite[Definition 5.2]{MONRO86}).

We now define the Boolean valued semantics. By now, we have not required any condition on the Boolean algebra $\bool{B}$. 
 For the definition of the semantic, however, we need to compute some infinite suprema. For this reason, we will always assume in the sequel that formulae are assigned truth values in the Boolean completion $\RO(\bool{B}^+)$ of $\bool{B}$ and we identify 
 $\bool{B}^+$ with a dense subset of the partial order $\RO(\bool{B}^+)^+$. In most cases $\bool{B}$ is rightaway a complete booolean algebra in which case
$\RO(\bool{B}^+)=\bool{B}$ and this complication vanishes.
\begin{definition}
Let $\M=\cp{M, =^\M, R^\M_i: i\in I}$ be a $\bool{B}$-valued model for the relational language $\LL=\bp{R_i: i\in I}$. We evaluate the formulae without free variables in the extended language $\LL_\M:=\LL\cup\bp{c_\sigma: \sigma\in M}$
by maps with values in the Boolean completion $\RO(\bool{B}^+)$ of $\bool{B}$ as follows:
\begin{itemize}
\item $\Qp{c_\sigma=c_\tau}^{\M}_{\RO(\bool{B}^+)}:=\Qp{\sigma=\tau}^\M_\bool{B}$ and 
$\Qp{R(c_{\sigma_1}, \dots, c_{\sigma_n})}^{\M}_{\RO(\bool{B}^+)}:=\Qp{R(\sigma_1, \dots, \sigma_n)}^\M_\bool{B}$;
\item $\Qp{\nphi\wedge\psi}^{\M}_{\RO(\bool{B}^+)}:=
\Qp{\nphi}^\M_{\RO(\bool{B}^+)}\wedge\Qp{\psi}^\M_{\RO(\bool{B}^+)}$;
\item $\Qp{\neg\nphi}^\M_{\RO(\bool{B}^+)}:=\neg\Qp{\nphi}^\M_{\RO(\bool{B}^+)}$;
\item $\Qp{\exists x\nphi(x, c_{\sigma_1}, \dots, c_{\sigma_n})}^\M_{\RO(\bool{B}^+)}:=
\bigvee_{\tau\in M}\Qp{\nphi(c_\tau, c_{\sigma_1}, \dots c_{\sigma_n})}^\M_{\RO(\bool{B}^+)}.$
\end{itemize}
If $\nphi(x_1, \dots, x_n)$ is any $\LL$-formula with free variables $x_1, \dots, x_n$ and $\nu$ is an assignment, we define $\Qp{\nu(\nphi(x_1, \dots, x_n))}^\M_{\RO(\bool{B}^+)}:=
\Qp{\nphi(c_{\nu(x_1)}, \dots, c_{\nu(x_n)})}^\M_{\RO(\bool{B}^+)}$.
\end{definition}
We will often write $\nphi(\sigma_1, \dots, \sigma_n)$ rather than $\nphi(c_{\sigma_1}, \dots, c_{\sigma_n})$
and $\Qp{\nphi(\tau_1, \dots, \tau_n)}$ rather than $\Qp{\nphi(\tau_1, \dots, \tau_n)}^\M_{\RO(\bool{B}^+)}$ if
no confusion can arise.

By induction on the complexity of $\LL$-formulae, it is possible to show the following:
\begin{fact}
For every $\sigma_1, \dots, \sigma_n, \tau_1, \dots, \tau_n\in M$ and for every $\LL$-formula $\nphi(x_1, \dots, x_n)$ with displayed free variables, the following holds:
\begin{equation}
\Bigl(\bigwedge_{i=1}^n\Qp{\sigma_i=\tau_i}\Bigr)\wedge\Qp{\nphi(\sigma_1, \dots, \sigma_n)}
\leq\Qp{\nphi(\tau_1, \dots, \tau_n)}.
\end{equation}
\end{fact}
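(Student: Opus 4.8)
The plan is to prove the displayed inequality by induction on the complexity of the $\LL$-formula $\nphi$, exactly as one does for ordinary first-order semantics in Tarski models, but keeping track of boolean values. First I would fix $\sigma_1,\dots,\sigma_n,\tau_1,\dots,\tau_n\in M$ and abbreviate $e:=\bigwedge_{i=1}^n\Qp{\sigma_i=\tau_i}$; the claim is $e\wedge\Qp{\nphi(\overline\sigma)}\leq\Qp{\nphi(\overline\tau)}$ for every $\LL$-formula $\nphi(x_1,\dots,x_n)$, and by symmetry of $e$ in $\overline\sigma,\overline\tau$ this will in fact give equality of $e\wedge\Qp{\nphi(\overline\sigma)}$ and $e\wedge\Qp{\nphi(\overline\tau)}$ (I would note this as a byproduct but the statement only asks for $\leq$).

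The base cases are handled by the axioms of Definition \ref{def:boolvalmod}: for an atomic formula $R(x_{j_1},\dots,x_{j_k})$ the inequality $\bigl(\bigwedge_{l}\Qp{\sigma_{j_l}=\tau_{j_l}}\bigr)\wedge\Qp{R(\overline\sigma)}\leq\Qp{R(\overline\tau)}$ is precisely clause (2) of the ``we require'' conditions (and the equality atomic formula $x_j=x_k$ is handled by the transitivity/symmetry axioms for $\Qp{\cdot=\cdot}$); since $e\leq\bigwedge_l\Qp{\sigma_{j_l}=\tau_{j_l}}$, monotonicity of $\wedge$ finishes the base case. For the inductive steps: for $\nphi=\psi_1\wedge\psi_2$ use that $e\wedge(\Qp{\psi_1(\overline\sigma)}\wedge\Qp{\psi_2(\overline\sigma)})=(e\wedge\Qp{\psi_1(\overline\sigma)})\wedge(e\wedge\Qp{\psi_2(\overline\sigma)})$ together with the two inductive hypotheses and monotonicity; for $\nphi=\neg\psi$ apply the inductive hypothesis with the roles of $\overline\sigma$ and $\overline\tau$ swapped, take complements, and use that $\neg$ reverses order in a boolean algebra, together with the identity $a\wedge\neg(a\wedge\neg b)\leq b$ (equivalently $a\wedge b\leq c \iff a\wedge\neg c\leq\neg b$); for $\nphi=\exists x\,\psi(x,\overline\sigma)$ compute
\[
e\wedge\Qp{\exists x\,\psi(x,\overline\sigma)}=e\wedge\bigvee_{\rho\in M}\Qp{\psi(\rho,\overline\sigma)}=\bigvee_{\rho\in M}\bigl(e\wedge\Qp{\psi(\rho,\overline\sigma)}\bigr),
\]
where the last step uses that $\RO(\bool{B}^+)$ is a complete boolean algebra, so finite meets distribute over arbitrary joins; by the inductive hypothesis applied to $\psi$ (with the extra variable $x$ instantiated to the same $\rho$ on both sides, so the corresponding equality value $\Qp{\rho=\rho}=1$ adds nothing to $e$) each disjunct is $\leq\Qp{\psi(\rho,\overline\tau)}$, hence the join is $\leq\bigvee_{\rho\in M}\Qp{\psi(\rho,\overline\tau)}=\Qp{\exists x\,\psi(x,\overline\tau)}$.

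I do not anticipate a genuine obstacle: the only mildly delicate point is the negation step, where one must phrase the inductive hypothesis in the ``swapped'' form and then manipulate complements correctly, and the existential step, where one must invoke the infinite distributive law valid in the complete boolean algebra $\RO(\bool{B}^+)$ (which is why the semantics was set up to take values there rather than in a possibly incomplete $\bool{B}$). Everything else is routine lattice arithmetic, so in the write-up I would state the base case by citing the axioms, dispatch $\wedge$ and $\exists$ in a line or two each, and give the negation case the small extra care it needs.
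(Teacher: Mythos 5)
Your proof is correct and is exactly the induction on formula complexity that the paper itself invokes (the paper only remarks that the Fact follows ``by induction on the complexity of $\LL$-formulae'' and omits the details); your base case via the axioms of Definition \ref{def:boolvalmod}, the swapped-hypothesis treatment of negation, and the use of the infinite distributive law in $\RO(\bool{B}^+)$ for the existential step are all sound. No gaps.
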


\begin{remark}
Given a signature $\mathcal{L}$,
and a morphism $(\Theta,i)$ of a $\bool{B}$-valued model $\M$ for $\mathcal{L}$ into a $\bool{C}$-valued model $\N$ for $\mathcal{L}$
it is a natural request that of imposing $i$ to be an adjoint homomorphism, hence complete: otherwise the truth value of $i\cp{\Qp{\exists x\psi(x)}^\M_{\bool{B}}}$ might not be $\bigvee_{\bool{C}}\bp{i\cp{\Qp{\psi(\sigma)}^\M_\bool{B}}: \sigma\in\M}$, in which case the morphism would not preserve the expected meaning of quantifiers.
\end{remark}

The following is the correct generalization to Boolean valued semantics of the notion of elementary embedding between Tarski structures:
\begin{definition}\label{def:elembvm}
Given a signature $\mathcal{L}$,
a morphism $(\Theta,i)$ of a $\bool{B}$-valued model $\M$ for $\mathcal{L}$ into a $\bool{C}$-valued model $\N$
for $\mathcal{L}$ is elementary if for all $a_1,\dots,a_n\in\M$ and $\mathcal{L}$-formulae $\phi(x_1,\dots,x_n)$
\[
i(\Qp{\phi(a_1,\dots,a_n}^\M_\bool{B})=\Qp{\phi(\Theta(a_1),\dots,\Theta(a_n)}^\N_\bool{C}.
\]
\end{definition}

\begin{definition}
A sentence $\nphi$ in the language $\LL$ is \emph{valid} in a $\bool{B}$-valued model $\M$ for $\LL$ if $\Qp{\nphi}=1_{\RO(\bool{B})}$. 
A theory $T$ is \emph{valid} in $\M$ (equivalently, $\M$ is a \emph{$\bool{B}$-model for $T$}) if every axiom of $T$ is valid in $\M$.
\end{definition}
\begin{theorem}[Soundness and Completeness]
Let $\LL$ be a relational language. An $\LL$-formula $\nphi$ is provable syntactically by an $\LL$-theory $T$ if and only if, for every Boolean algebra $\bool{B}$ and for every $\bool{B}$-valued model $\M$ for $T$, $\Qp{\nu(\nphi)}^\M_{\RO(\bool{B})}=1_{\RO(\bool{B})}$ for every assignment $\nu$ taking values in $\M$.
\end{theorem}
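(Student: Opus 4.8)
The plan is to run the classical two-step argument, adapting Mansfield's treatment of boolean valued semantics; I fix once and for all a standard Hilbert-style deductive calculus for first order logic with equality, so that ``$T$ proves $\nphi$'' has its usual meaning (the statement does not depend on the choice of calculus). For soundness I would fix an arbitrary boolean algebra $\bool{B}$ and a $\bool{B}$-valued model $\M\sat T$ and check that the set $\mathcal V$ of $\LL_\M$-sentences $\psi$ with $\Qp{\psi}^\M_{\RO(\bool{B}^+)}=1_{\RO(\bool{B}^+)}$ contains all logical axioms, contains every axiom of $T$ under every assignment, and is closed under the inference rules. Substitution instances of propositional tautologies lie in $\mathcal V$ simply because $\RO(\bool{B}^+)$ is a boolean algebra; the equality axioms lie in $\mathcal V$ by the three clauses of Definition \ref{def:boolvalmod} and the congruence condition, together with the substitution-of-equals Fact; the quantifier axioms and rules are handled by the identities $\Qp{\exists x\,\phi}=\bigvee_{\tau\in M}\Qp{\phi(\tau)}$ and $\Qp{\forall x\,\phi}=\bigwedge_{\tau\in M}\Qp{\phi(\tau)}$; modus ponens preserves $\mathcal V$ because $\Qp{\psi}\geq\Qp{\phi}\wedge\Qp{\phi\imp\psi}$, and generalization because a meet of $1$'s is $1$. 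An induction on derivations then yields $\Qp{\nu(\nphi)}^\M_{\RO(\bool{B}^+)}=1_{\RO(\bool{B}^+)}$ for every assignment $\nu$ in $\M$ whenever $T\vdash\nphi$.

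For completeness I would argue by contraposition with a Henkin/Lindenbaum--Tarski construction. Assume $T\not\vdash\nphi$; then $T\not\vdash\forall\overline{x}\,\nphi$, so after adjoining fresh constants $\overline{d}$ the theory $T\cup\{\neg\nphi(\overline{d})\}$ is consistent. Extend it in a language $\LL'$ (obtained by adding countably many further witnessing constants) to a consistent theory $T'$ with the Henkin property: for each $\exists x\,\phi(x)$ there is a constant $c$ with $T'\vdash\exists x\,\phi(x)\imp\phi(c)$. Let $\bool{B}:=\mathrm{Sent}(\LL')/{\sim_{T'}}$ be the Lindenbaum--Tarski boolean algebra, with $[\psi]\leq[\theta]$ iff $T'\vdash\psi\imp\theta$, and let $\M$ be the term model whose domain is the set of constants of $\LL'$, with $\Qp{c=c'}^\M_\bool{B}:=[c=c']_{T'}$ and $\Qp{R(\overline{c})}^\M_\bool{B}:=[R(\overline{c})]_{T'}$. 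Since the equality and congruence axioms are theorems, $\M$ is a $\bool{B}$-valued $\LL$-model, and since $[\psi]=1_\bool{B}$ for every $\psi\in T$ it is a $\bool{B}$-model of $T$. The heart of the matter is the identity $\Qp{\phi(\overline{c})}^\M_{\RO(\bool{B}^+)}=[\phi(\overline{c})]_{T'}$ for every $\LL'$-formula $\phi$ and every matching tuple of constants, which I would prove by induction on $\phi$: the atomic case is the definition of $\M$, the boolean connectives transfer because the dense embedding $\bool{B}\hookrightarrow\RO(\bool{B}^+)$ is a boolean homomorphism, and for $\exists x\,\phi$ one checks that $[\exists x\,\phi(\overline{c})]_{T'}$ is the supremum in $\bool{B}$ of $\{[\phi(c',\overline{c})]_{T'}:c'\text{ a constant of }\LL'\}$ --- the inequality $\geq$ is termwise, as $\vdash\phi(c')\imp\exists x\,\phi$, and $\leq$ comes from the Henkin witness --- and this supremum is preserved when passing to $\RO(\bool{B}^+)$. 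Evaluating at the assignment $\nu(x_i):=d_i$ then gives $\Qp{\nu(\nphi)}^\M_{\RO(\bool{B}^+)}=[\nphi(\overline{d})]_{T'}$, which is $\neq 1_{\RO(\bool{B}^+)}$ because $T'\vdash\neg\nphi(\overline{d})$ and $T'$ is consistent; this contradicts the hypothesis that $\nphi$ is valid in every $\bool{B}$-model of $T$.

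The main obstacle, and essentially the only non-routine part, is the completeness direction: the Henkin expansion must be carried out carefully, and the key lemma is that joins in the Lindenbaum--Tarski algebra compute the boolean values of existential formulas correctly. This in turn rests on the (folklore) fact that the canonical dense embedding of a boolean algebra into its completion $\RO(\bool{B}^+)$ preserves every supremum that already exists in $\bool{B}$ --- proved by a short density argument --- so that the join defining $\Qp{\exists x\,\phi}$ may be taken equivalently in $\bool{B}$ or in $\RO(\bool{B}^+)$; this point deserves to be isolated as an auxiliary lemma. The soundness direction is a standard induction with no surprises.
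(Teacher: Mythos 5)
Your proposal is correct, but note that the paper itself does not prove this theorem: it delegates both directions to \cite[Theorems 4.1.5 and 4.1.8]{viale-notesonforcing}, so there is no in-paper argument to match against. Your soundness induction is the standard one and is unproblematic. For completeness you rebuild G\"odel's theorem inside the boolean valued framework: Henkin expansion (legitimate here, since new constants keep the language relational), Lindenbaum--Tarski algebra $\bool{B}$ of the Henkinized theory $T'$, term model on the constants, and the key identity $\Qp{\phi(\overline{c})}^{\M}_{\RO(\bool{B}^+)}=[\phi(\overline{c})]_{T'}$. This works, and your two supporting points are sound: the canonical dense embedding of $\bool{B}$ into $\RO(\bool{B}^+)$ does preserve every supremum already existing in $\bool{B}$ (this is in effect recorded in the paper's remark that $\bigcup_{a\in A}N_a$ is dense in $N_b$ exactly when $\bigvee_{\bool{B}}A=b$), and in fact you need even less, because the Henkin property gives $[\exists x\,\phi(x,\overline{c})]_{T'}=[\phi(c,\overline{c})]_{T'}$ for the witnessing constant $c$, so the relevant supremum is attained as a maximum and its preservation is automatic. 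The one respect in which your route is heavier than necessary is that the completeness direction follows in one line from classical two-valued completeness: a Tarski structure is exactly a $\bool{2}$-valued model, so if $T\nvdash\nphi$ then a classical model of $T\cup\{\neg\forall\overline{x}\,\nphi\}$ already refutes validity, with $\bool{B}=\bool{2}$ and a suitable assignment. Your construction buys self-containedness (no appeal to the classical theorem) and yields a genuinely boolean valued countermodel over the Lindenbaum algebra, at the cost of redoing the Henkin machinery; either route establishes the statement.
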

\begin{proof}
See for instance \cite[Theorem 6.1.5]{viale-bookonforcing}.
\end{proof}

%

\subsection{\L o\'s Theorem for Boolean valued models and fullness}

\begin{definition}\label{def:quotboolmod}
Let $\bool{B}$ be a Boolean algebra and let $\M=\cp{M, =^\M, R^\M_i: i\in I}$ be a $\bool{B}$-valued model for the relational language $\LL=\bp{R_i: i\in I}$. Let $F$ be a filter of $\bool{B}$. The \emph{quotient} $\M/_F$ is the $\bool{B}/_F$-valued model for $\LL$ defined as follows:
\begin{itemize}
\item the domain is $M/_F:=\bp{[\sigma]_F: \sigma\in M}$, where $[\sigma]_F:=\bp{\tau\in M: \Qp{\tau=\sigma}\in F}$;
\item if $R\in\LL$ is an $n$-ary relational symbol and $[\sigma_1]_F, \dots, [\sigma_n]_F\in M/_{F}$,
\[
\Qp{R([\sigma_1]_F, \dots, [\sigma_n]_F)}^{\M/_F}_{\bool{B}/_F}:=\Bigl[\Qp{R(\sigma_1, \dots, \sigma_n)}^\M_{\bool{B}}\Bigr]_F\in\bool{B}/_F.
\]
\end{itemize}
\end{definition}
It is easy to check that this quotient is well-defined. In particular, if $G$ is an ultrafilter, the quotient 
$\M/_G$ is a two-valued Tarski structure for $\LL$.
On the other hand if $\bool{B}$ is a Boolean algebra and $F$ is a filter on $\bool{B}$, it can happen that 
$\bool{B}$ is complete while $\bool{B}/_F$ is not, and conversely. In particular it is not clear how the semantics of 
formulae with quantifiers is affected by the quotient operation. We now address this problem.
\begin{definition}\label{def:fullness-wellbehaved}
Given a first order signature $\LL$,
a $\bool{B}$-valued model $\M$ for $\LL$ is 
\begin{itemize}
\item
\emph{well behaved} if:
for all $\LL$-formulae $\phi(x_1,\dots,x_n)$
and $\tau_1,\dots,\tau_n\in\M$ $\Qp{\phi(\tau_1,\dots,\tau_n)}^\M_{\RO(\bool{B})}$ is in $\bool{B}$;
\item
\emph{full} if for all ultrafilters $G$ on $\bool{B}$, all $\LL$-formulae $\phi(x_1,\dots,x_n)$
and all $\tau_1,\dots,\tau_n\in\M$
\[
\M/_G\models\phi([\tau_1]_G,\dots,[\tau_n]_G)\qquad\text{ if and only if }\qquad\Qp{\phi(\tau_1,\dots,\tau_n)}^\M\in G.
\]
\end{itemize}
\end{definition}
Notice that if $\bool{B}$ is complete $\bool{B}\cong\RO(\bool{B}^+)$ hence any $\bool{B}$-valued model 
$\M$ is automatically well-behaved. But being well-behaved does not require $\bool{B}$ to be complete, just to be able to compute all the suprema and infima required by the satisfaction clauses for $\exists,\forall$.
On the other hand for atomic $\LL_\M$-formulae the condition expressing fullness is automatic by definition. 
The unique delicate case occurs for $\LL_\M$-formulae  in 
which the principal connective is a quantifier, for example of the form $\exists x\phi(x)$, in which case
the fullness conditions for the formula amounts to ask that $\sup_{\sigma\in\M}\Qp{\phi(\sigma)}^\M$ is actually a finite supremum, as we will see. We will investigate this property in many details in the last two sections of this paper. For the moment letting a 
$\LL_\M$-formula $\phi(x_1,\dots,x_n)$ be \emph{$F$-full} for a filter $F$ if and only if 
\[
\Qp{\phi([\tau_1]_F,\dots,[\tau_n]_F)}^{\M/_F}=1_{\bool{B}/_F}\qquad\Longleftrightarrow\qquad\Qp{\phi(\tau_1,\dots,\tau_n)}^\M\in F,
\]
we automatically have that the family of $F$-full $\LL_\M$-formulae includes the atomic formulae and is closed under conjuctions for any filter $F$, and under Boolean 
combinations for any ultrafilter $F$. 
Another key observation is that if $F$ is a filter and
\[
\bigvee_{i=1}^m\Qp{\phi(\sigma_i)}^{\M}=\Qp{\exists x\,\phi(x)}^{\M}=\bigvee_{\tau\in \M}\Qp{\phi(\tau)}^{\M},
\]
then
\[
\bigvee_{i=1}^m\Qp{\phi([\sigma_i]_F)}^{\M/_F}=\Qp{\exists x\,\phi(x)}^{\M/_F},
\]
since 
\[
\bigvee_{i=1}^m[\Qp{\phi(\sigma_i)}^\M]_F=\Bigl[\bigvee_{i=1}^m\Qp{\phi(\sigma_i)}^{\M}\Bigr]_F\geq_F [\Qp{\phi(\tau)}^{\M}]_F
\]
holds for all $\tau\in\M$.
The notion of full $\bool{B}$-valued model displays its full power in the following result:
\begin{theorem}[\L o\'s Theorem for Boolean valued models] \label{thm:charLosThm}
Let $\M$ be a well behaved $\bool{B}$-valued model for the signature $\LL$.
The following are equivalent:
\begin{itemize}
\item $\M$ is full.
\item for all $\LL_\M$-formulae $\phi(x_0,\dots,x_n)$
and all $\tau_1,\dots,\tau_n\in\M$ there exist $\sigma_1, \dots, \sigma_m\in\M$ such that
\[
\bigvee_{\tau\in \M}\Qp{\phi(\tau,\tau_1,\dots,\tau_n)}=\bigvee_{i=1}^m\Qp{\phi(\sigma_i,\tau_1,\dots,\tau_n)}.
\]
\end{itemize}
\end{theorem}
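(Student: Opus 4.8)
The plan is to prove both implications by combining the closure properties of $F$-full $\LL_\M$-formulae recorded just before the statement with the two elementary facts bridging finite and infinite suprema (Stone-space compactness in one direction, the remark on $\bigvee_{i=1}^m\Qp{\phi(\sigma_i)}^\M=\bigvee_{\tau\in\M}\Qp{\phi(\tau)}^\M$ in the other).

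For $(2)\Rightarrow(1)$, I would fix an ultrafilter $G$ on $\bool{B}$ and prove by induction on the complexity of $\LL_\M$-formulae that every $\phi(x_1,\dots,x_n)$ is $G$-full, i.e. that for all $\tau_1,\dots,\tau_n\in\M$ one has $\M/_G\models\phi([\tau_1]_G,\dots,[\tau_n]_G)$ if and only if $\Qp{\phi(\tau_1,\dots,\tau_n)}^\M\in G$. The atomic case holds by the very definition of $\M/_G$, and the Boolean connectives are absorbed by the already-noted closure of $G$-full formulae under Boolean combinations (this is where $G$ being an \emph{ultra}filter is used). The only substantive case is $\exists x_0\,\psi(x_0)$ (with $\psi$ an $\LL_\M$-formula of lower complexity, possibly carrying parameters): by $(2)$ there are $\sigma_1,\dots,\sigma_m\in\M$ with $\bigvee_{\tau\in\M}\Qp{\psi(\tau)}^\M=\bigvee_{i=1}^m\Qp{\psi(\sigma_i)}^\M$, so $\Qp{\exists x_0\,\psi}^\M\in G$ iff some $\Qp{\psi(\sigma_i)}^\M\in G$ (ultrafilter), iff by the inductive hypothesis $\M/_G\models\psi([\sigma_i]_G)$ for some $i$, iff $\M/_G\models\exists x_0\,\psi$; the last equivalence holds because a witness $[\rho]_G$ for $\exists x_0\,\psi$ in $\M/_G$ yields $\Qp{\psi(\rho)}^\M\in G$ by the inductive hypothesis, whence $\bigvee_{\tau\in\M}\Qp{\psi(\tau)}^\M=\bigvee_{i=1}^m\Qp{\psi(\sigma_i)}^\M\in G$ and one of the $\sigma_i$ already works (universal quantifiers reduce to this via $\neg\exists\neg$).

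For $(1)\Rightarrow(2)$, fix $\phi(x_0,\dots,x_n)$ and $\tau_1,\dots,\tau_n\in\M$, and set $b:=\Qp{\exists x_0\,\phi(x_0,\tau_1,\dots,\tau_n)}^\M$, which lies in $\bool{B}$ since $\M$ is well behaved. Working in $\St(\bool{B})$, I would show that the clopen sets $N_{\Qp{\phi(\sigma,\tau_1,\dots,\tau_n)}^\M}$ ($\sigma\in\M$) cover the clopen, hence compact, set $N_b$: for $G\in N_b$ we have $b\in G$, so by fullness $\M/_G\models\exists x_0\,\phi(x_0,[\tau_1]_G,\dots,[\tau_n]_G)$, some $[\rho]_G$ is a witness, and again by fullness $\Qp{\phi(\rho,\tau_1,\dots,\tau_n)}^\M\in G$. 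Extracting a finite subcover indexed by $\sigma_1,\dots,\sigma_m$ and using $\bigcup_{i=1}^m N_{a_i}=N_{\bigvee_{i=1}^m a_i}$ together with Stone duality gives $b\le\bigvee_{i=1}^m\Qp{\phi(\sigma_i,\tau_1,\dots,\tau_n)}^\M$; the reverse inequality is immediate because each $\Qp{\phi(\sigma_i,\tau_1,\dots,\tau_n)}^\M\le\bigvee_{\tau\in\M}\Qp{\phi(\tau,\tau_1,\dots,\tau_n)}^\M=b$, so $(2)$ follows. (Alternatively, if no finite subfamily attained $b$, then $\{b\}\cup\{\neg\Qp{\phi(\sigma,\tau_1,\dots,\tau_n)}^\M:\sigma\in\M\}$ would have the finite intersection property, yielding an ultrafilter $G\ni b$ with $\Qp{\phi(\sigma,\tau_1,\dots,\tau_n)}^\M\notin G$ for every $\sigma$, which contradicts fullness exactly as above.)

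I expect the main obstacle to be the existential step of $(2)\Rightarrow(1)$: this is precisely the point at which an a priori infinite supremum must be collapsed to a finite one so that the ultrafilter $G$ is forced to commit to an explicit witness, and condition $(2)$ is tailored to permit this; the remainder of the induction is routine bookkeeping. A secondary point to watch is keeping all truth values inside $\bool{B}$ rather than in $\RO(\bool{B}^+)$, which is exactly what the well-behavedness hypothesis guarantees and what allows the compactness/Stone-duality argument for $(1)\Rightarrow(2)$ to take place in $\St(\bool{B})$.
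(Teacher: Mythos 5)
Your proposal is correct and follows essentially the same route as the paper's proof: fullness implies the finite-supremum condition via the compactness of the clopen set $N_{\Qp{\exists x\,\phi}}$ in $\St(\bool{B})$ covered by the sets $N_{\Qp{\phi(\sigma,\overline{\tau})}}$, and the converse is proved by induction on formula complexity, with the ultrafilter property collapsing the (now finite) supremum to a single disjunct at the existential step. The extra details you supply (atomic and Boolean cases, the FIP variant, reduction of $\forall$ to $\neg\exists\neg$) are just the routine bookkeeping the paper leaves implicit.
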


\begin{proof}
We sketch the proof for the case of existential formulae.\\
First, assume $\M$ to be full:
\[
\M/_G\models\exists x\psi(x, [\tau_1]_G,\dots,[\tau_n]_G)\quad\Leftrightarrow\quad\Qp{\exists x\psi(x, \tau_1,\dots,\tau_n)}^\M\in G.
\]
Thus for every $G$ such that $\Qp{\exists x\psi(x, \tau_1,\dots,\tau_n)}^\M\in G$ there exists $\sigma_G\in\M$ such that $\M/_G\models\psi([\sigma_G]_G, [\tau_1]_G,\dots,[\tau_n]_G).$ \\
Again using the hypothesis, we obtain that for every $G\in N_{\Qp{\exists x\psi(x, \tau_1,\dots,\tau_n)}}$ there exists $\sigma_G\in\M$ such that $G\in N_{\Qp{\psi(\sigma_G, \tau_1,\dots,\tau_n)}}$, that is:
\[
N_{\Qp{\exists x\psi(x, \tau_1,\dots,\tau_n)}}\subseteq\bigcup_{\sigma\in\M}N_{\Qp{\psi(\sigma, \tau_1,\dots,\tau_n)}}.
\]
By compactness, $N_{\Qp{\exists x\psi(x, \tau_1,\dots,\tau_n)}}=\bigcup_{i=1}^mN_{\Qp{\psi(\sigma_i, \tau_1, \dots, \tau_n)}}$, which is our thesis.\\
Conversely, assume that for every formula $\phi$, $\tau_1, \dots, \tau_n\in\M$ there exist $\sigma_1, \dots, \sigma_m\in\M$ such that
\[
\bigvee_{\tau\in \M}\Qp{\phi(\tau,\tau_1,\dots,\tau_n)}=\bigvee_{i=1}^m\Qp{\phi(\sigma_i,\tau_1,\dots,\tau_n)}.
\]
By induction on the complexity of the formulae we prove that $\M$ is full. Let us consider only the non-trivial case: $\psi(x_1,\dots, x_n)=\exists x\phi(x, x_1, \dots, x_n)$. Then
\begin{align*}
\M/G\models\exists x\phi(x, [\tau_1]_G, \dots, [\tau_n]_G)\Leftrightarrow&\M/G\models\phi([\sigma]_G, [\tau_1]_G, \dots, [\tau_n]_G)\\
&\textit{ for some }\sigma\in\M\\
\Leftrightarrow&\Qp{\phi(\sigma,\tau_1, \dots, \tau_n)}\in G\textit{ for some }\sigma\in\M\\
\Rightarrow&\Qp{\exists x\phi(x, \tau_1, \dots, \tau_n)}\in G.
\end{align*}
Conversely, if $\Qp{\exists x\phi(x, \tau_1, \dots, \tau_n)}\in G$, since $\Qp{\exists x\phi(x, \tau_1, \dots, \tau_n)}=\bigvee_{i=1}^m\Qp{\phi(\sigma_i, \tau_1, \dots, \tau_n)}$, there exists $i\in\bp{1, \dots, m}$ such that $\Qp{\phi(\sigma_i, \tau_1, \dots, \tau_n)}\in G$. By inductive hypothesis
\[
\M/G\models\phi([\sigma_i]_G, [\tau_1]_G, \dots, [\tau_n]_G)
\]
and so $\M/G\models\exists x\phi(x, [\tau_1]_G, \dots, [\tau_n]_G)$.
\end{proof}
One recovers the standard \L o\'s Theorem for ultraproducts observing that any product $\prod_{i\in I}N_i$ of $\LL$-structures
is a full $\pow{I}$-valued model for $\LL$, and that the semantic of the quotient structure $\bigl(\prod_{i\in I}N_i\bigr)/G$ for $G$ ultrafilter on $\pow{I}$
is exactly ruled by the conditions set up in the above theorem.\\

Fullness is not automatic (and as we will see is a slight weakening of the notion of sheaf):
\cite[Example 6.3.5]{viale-bookonforcing}
gives a an explicit example of a Boolean valued model which is well behaved but not full. 

\subsection{The mixing property and fullness} 

The mixing property gives a sufficient condition for  fullness  which is, usually, easier to check; as we will see the mixing property characterizes those Boolean valued models which are sheaves for the Sup Grothendieck topology.
\begin{definition}\label{def:mixprop}
Let $\kappa$ be a cardinal,
$\LL$ be a first order language, $\bool{B}$ a 
$\kappa$-complete Boolean algebra,
$\M$ a $\bool{B}$-valued model for $\LL$.
\begin{itemize}
\item
$\M$ satisfies the \emph{$\kappa$-mixing property} if for every antichain $A\subset \bool{B}$
of size at most $\kappa$, and for every subset $\{\tau_a: a\in A\}\subseteq M$, there exists 
$\tau\in M$ such that $a\leq\llbracket\tau=\tau_a\rrbracket$ for every $a\in A$.
\item
$\M$ satisfies the \emph{$<\kappa$-mixing property}
if it satisfies the $\lambda$-mixing property for all
cardinals $\lambda<\kappa$.
\item
$\M$ satisfies the \emph{mixing property}
if it satisfies the $|\bool{B}|$-mixing property.
\end{itemize}
\end{definition}
In \cite{HODG} models with the $<\omega$-mixing property are called models which \emph{admit gluing}.
Whether a $\bool{B}$-valued model $\M$ has the mixing property depends only on the interpretation of the equality symbol by $\Qp{-=-}^\M$.
\begin{proposition}
\label{ful:mod}
Let $\bool{B}$ be a complete Boolean algebra and let $\M$ be a $\bool{B}$-valued model for $\LL$.  Assume that $\M$ satisfies the $\kappa$-mixing property for some $\kappa\geq\min\bp{|\bool{B}|, |M|}$. Then $\M$ is full, according to the following stronger criterion:
\begin{quote}
\emph{
For every formula $\phi(x_0, x_1, \dots, x_n)$ and for every $\tau_1, \dots, \tau_n\in\M$ there exists an element $\tau_0$ such that}
\[
\bigvee_{\sigma_\in\M}\Qp{\phi(\sigma, \tau_1, \dots, \tau_n)}=\Qp{\phi(\tau_0, \tau_1, \dots, \tau_n)}.
\]
\end{quote}
\end{proposition}
For a proof see \cite[Proposition 6.3.14, Remark 6.3.15]{viale-bookonforcing}.
The stronger form of fullness appearing in the above Proposition is the definition of fullness one can find for instance in \cite{JECH}. It is easy to see that this property is true in every full model $\M$ satisfying the $<\omega$-mixing property.

We will now exhibit an example of full Boolean valued model which does not satisfy the mixing property. To do so, we let $V$ 
denote the class of all sets and assume $(V,\in)$ is a model of \ZFC. The reader not familiar with forcing as developed in \cite{viale-bookonforcing} or also in any other classic reference (as
Kunen's \cite{KUNEN}, Jech's \cite{JECH}, or Bell's \cite{BELL} textbooks) can safely skip this example without compromising the comprehension of the remainder of this article.
\begin{example}\label{ex:fullnotimplmix}
\label{non:mix}
We stick in this example to the  terminology on forcing as in \cite{viale-bookonforcing}.
Let $M\in V$ be a countable transitive model of (a sufficiently large fragment of) $\ZFC$ and let $\bool{B}\in M$ be an 
infinite Boolean algebra which $M$ models to be complete. Consider the $\bool{B}$-valued model 
$(M^{\bool{B}},\Qp{-=-}^{M^{\bool{B}}},\Qp{-\in-}^{M^{\bool{B}}})$, which is a definable class in $(M,\in)$ 
(and a countable set in $V$). Now, it can be proved (see \cite[Mixing Lemma 7.2.1]{viale-bookonforcing}) that
\[
M\models (M^{\bool{B}},\Qp{-=-}^{M^{\bool{B}}})\textit{ has the mixing property}
\]
In particular,
\[
M\models (M^{\bool{B}},\Qp{-=-}^{M^{\bool{B}}},\Qp{-\in-}^{M^{\bool{B}}})\textit{ is full}.
\]
The notion of being full is absolute between transitive models of set theory,\footnote{If $M$ proves that $M^{\bool{B}}$ is full, $M$ proves that a certain element $\bigvee_{i=1}^m\Qp{\nphi(a_i)}^{M^{\bool{B}}}\in\bool{B}$ is equal to the element 
$\Qp{\exists x\nphi(x)}^{M^{\bool{B}}}\in\bool{B}$, and this equality between elements of $\bool{B}$ must be true also in $V$.} 
therefore
\[
(V,\in)\models(M^{\bool{B}},\Qp{-=-}^{M^{\bool{B}}},\Qp{-\in-}^{M^{\bool{B}}})\textit{ is full}.
\]
For a complete analysis of $M^\bool{B}$ we refer to \cite[Sections 7.1.2 and 7.2]{viale-bookonforcing} and in particular to \cite[Lemma 7.2.1]{viale-bookonforcing}.

We will now show that, in $V$, $(M^{\bool{B}},\Qp{-=-}^{M^{\bool{B}}},\Qp{-\in-}^{M^{\bool{B}}})$ does not satisfy the 
mixing property. Being $M$ countable, for sure there exists a maximal antichain $A$ of $\bool{B}\subseteq M$ such that 
$A\in V\setminus M$. We claim that this antichain witnesses the fact that the $\bool{B}$-valued model 
$(M^{\bool{B}},\Qp{-=-}^{M^{\bool{B}}},\Qp{-\in-}^{M^{\bool{B}}})$ 
does not have the mixing property.\\
By contradiction, let $\tau\in M^{\bool{B}}$ be such that
\[
\Qp{\tau=\check{a}}\geq a
\]
for every $a\in A$. Then $A$ is definable in $M$ by the formula $\nphi(x,\bool{B})$ where
\[
\nphi(x,\bool{B}):=(x\textit{ is an antichain of $\bool{B}$})\wedge(\bigvee_{\bool{B}}x=1_{\bool{B}})\wedge\forall y(y\in x\lrao(y\in\bool{B}\wedge\Qp{\tau=\check{y}}\geq y)).
\]
The antichain $A$ is the unique solution of the formula $\nphi$. Indeed, assume $C$ to be another solution of $\nphi$. 
Let $c\in C\setminus A$. Then, by the maximality of $A$, there exists $a\in A$ such that $a\wedge c>0$. 
Being $A$ and $C$ solutions of $\nphi$, we have
\[
0_\bool{B}<a\wedge c\leq\Qp{\tau=\check{a}}\wedge\Qp{\tau=\check{c}}\leq\Qp{\check{a}=\check{c}}=0_{\bool{B}},
\]
where $\Qp{\check{a}=\check{c}}=0_\bool{B}$ since $M\models(a\neq c)$ and so 
\[
M\models\Qp{\neg(\check{a}=\check{c})}^{M^{\bool{B}}}_{\bool{B}}=1_\bool{B}.
\]
Being $A$ a subset of $\bool{B}$ definable in $M$, by the Axiom of Comprehension (which holds in the $\ZFC$-model $M$) we have that $A\in M$, against our assumption.
\end{example}

\section{The presheaf structure of a Boolean valued model}
\label{sec:pres}
In this section we describe the correspondence between Boolean valued models and presheaves. We will use this correspondence to describe the mixing property both in terms of sheaves and of sections of \'etal\'e spaces.

Most of the results in this section are special instances of Fourman--Scott's work \cite{FOURSCOTT} and Monro's works \cite{MONROBIS, MONRO86}. We decide to give explicitly the proofs in order to take care of the differences between our definition of Boolean valued models and Monro's one, and to make our presentation self-contained and accessible to readers  (as we are) familiar with set theory.

\subsection{Boolean valued models as presheaves on a Boolean algebra}
We have all the ingredients needed to transfer the notion of Boolean valued model to the sheaf theoretic setting.

Fix a Boolean algebra $\bool{B}$ and a $\bool{B}$-valued model $\M$. 
For every $b\in B^+$, let $F_b$ be the filter generated by $b$, and consider 
the $\bool{B}/_{F_b}$-valued model $\M/_{F_b}$.
Note that if $N_b\subseteq N_c$ (i.e. $b\leq c$), then $F_b\supseteq F_c;$ therefore the
natural map 
\begin{align*}
i^\M_{bc}:& \M/_{F_c}\to \M/_{F_b}\\
& [\tau]_{F_c}\mapsto [\tau]_{F_b}
\end{align*}
defines a morphism from the $\bool{B}/_{F_c}$ valued model $\M/_{F_c}$ onto the $\bool{B}/_{F_b}$-valued 
model $\M/_{F_b}$. Note also that $\bool{B}/_{F_b}$ is isomorphic to $\bool{B}\restriction b$ via the map $[d]_{F_b}\mapsto d\wedge b$ for $d\in \bool{B}$.
Note also that $F_{0_\bool{B}}=\bool{B}$ and $\bool{B}/_{F_{0_\bool{B}}}$ is the trivial Boolean algebra with one element and $\M/_{F_{0_\bool{B}}}$ is the trivial Boolean valued model whose domain has one element and all relations are interpreted with full extension.

It is convenient from now on to look at a Boolean algebra at times as an algebraic structure and at other times as the clopens of its Stone space. We will freely do so in the sequel.


\begin{definition}\label{def:preshbvm}
Given a Boolean algebra $\bool{B}$
and a $\bool{B}$-valued model $\M$ with domain $M$ for a signature $\LL$, its associated presheaf $\F_\M:\bool{B}^{\textrm{op}}\to\dSet$ is defined by:
\begin{itemize}
\item $\F_\M(b):=M/_{F_b}$ for any $b\in\bool{B}$;

\item $\F_\M(b\leq c)$ for $b\leq c\in\bool{B}$ is the map
\begin{align*}
i^\M_{bc}:& \M/_{F_c}\to \M/_{F_b}.\\
& [\tau]_{F_c}\mapsto [\tau]_{F_b}
\end{align*}
\end{itemize}
\end{definition}

\begin{remark}
For any $\bool{B}$-valued model $\M$,
$\F_\M$ is a separated presheaf on  $\bool{B}$  for the supremum Grothendieck topology which furthermore has the property that the restriction maps $\F_\M(b\leq c)$ are always surjective. In particular $\F_\M$ is determined by its global sections (according to notation \ref{not:globgrtop}).
 \end{remark}

\subsection{The mixing property corresponds to the sheaf condition}
We can now relate the notion of mixing model to that of sheaf for the sup Grothendieck topology. This is obtained via the above translation, by means of the following characterization of Boolean valued models with the mixing property (which is the Boolean case of the general result holding for Heyting valued models \cite[Proposition 5.6]{MONRO86} or \cite[Theorem 4.13]{FOURSCOTT}):
\begin{proposition}
\label{chr:mix}
Let $\bool{B}$ be a complete Boolean algebra and
$\M$ be a $\bool{B}$-valued model.
Then $\M$ has the mixing property if and only if the separated presheaf 
$\F_\M$
of Definition \ref{def:preshbvm} is a sheaf.
\end{proposition}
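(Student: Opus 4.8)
The plan is to exploit the fact, already established, that $\F_\M$ is a \emph{separated} presheaf, so that collations are automatically unique; thus $\F_\M$ is a sheaf for the sup Grothendieck topology on $\bool{B}^+$ if and only if every compatible family indexed by a complete covering admits a collation. Two elementary computations drive everything: an element of $\F_\M(b)=\M/_{F_b}$ has the form $[\tau]_{F_b}$, with $[\tau]_{F_b}=[\sigma]_{F_b}$ if and only if $\Qp{\tau=\sigma}\geq b$, and the restriction map $\F_\M(a\leq b)$ sends $[\tau]_{F_b}$ to $[\tau]_{F_a}$. Consequently, writing a compatible family as $f_i=[\tau_i]_{F_{b_i}}$, the compatibility condition ``$f_i\restriction r=f_j\restriction r$ for all $r\leq b_i\wedge b_j$'' forces $\Qp{\tau_i=\tau_j}\geq r$ for all such $r$, hence $\Qp{\tau_i=\tau_j}\geq b_i\wedge b_j$ by completeness of $\bool{B}$; dually, a collation on $b\geq\bigvee_i b_i$ is precisely an element $[\tau]_{F_b}$ with $\Qp{\tau=\tau_i}\geq b_i$ for all $i$.

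For ``mixing $\Rightarrow$ sheaf'', fix a complete covering $\{b_i:i\in I\}$ of some $b$ and a compatible family $f_i=[\tau_i]_{F_{b_i}}$. Fix a well-order on $I$ and set $a_i:=b_i\setminus\bigvee_{j<i}b_j$; then $\{a_i:a_i\neq 0_\bool{B}\}$ is an antichain of $\bool{B}$ (of size at most $|\bool{B}|$) with $\bigvee_i a_i=\bigvee_i b_i=b$ and $a_i\leq b_i$. By the mixing property applied to this antichain and the family $\{\tau_i:a_i\neq 0_\bool{B}\}$ there is $\tau\in\M$ with $a_i\leq\Qp{\tau=\tau_i}$ whenever $a_i\neq 0_\bool{B}$. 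I claim $\Qp{\tau=\tau_i}\geq b_i$ for \emph{every} $i\in I$: since $b_i\leq b$, distributivity in the complete algebra $\bool{B}$ gives $b_i=b_i\wedge\bigvee_j a_j=\bigvee_j(b_i\wedge a_j)$, and for each $j$ with $a_j\neq 0_\bool{B}$ we have $b_i\wedge a_j\leq b_i\wedge b_j\leq\Qp{\tau_i=\tau_j}$ by compatibility and $b_i\wedge a_j\leq a_j\leq\Qp{\tau=\tau_j}$, whence $b_i\wedge a_j\leq\Qp{\tau=\tau_j}\wedge\Qp{\tau_j=\tau_i}\leq\Qp{\tau=\tau_i}$ by transitivity of the equality value; taking the join over $j$ proves the claim. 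Therefore $[\tau]_{F_b}\in\F_\M(b)$ is a collation of $\{f_i\}$, and it is the unique one since $\F_\M$ is separated.

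For the converse, suppose $\F_\M$ is a sheaf and let $A=\{a_i:i\in I\}$ be an antichain of $\bool{B}$ with $0_\bool{B}\notin A$ and $\{\tau_i:i\in I\}\subseteq\M$. Put $b:=\bigvee_i a_i$ and consider $f_i:=[\tau_i]_{F_{a_i}}\in\F_\M(a_i)$. Since $a_i\wedge a_j=0_\bool{B}$ for $i\neq j$, no $r\in\bool{B}^+$ refines two distinct members of $A$, so the family $\{f_i:i\in I\}$ is (vacuously) compatible, and $\{a_i:i\in I\}$ is a complete covering of $b$; by the sheaf property there is a collation $f\in\F_\M(b)$. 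Pick $\tau\in\M$ with $f=[\tau]_{F_b}$, which is possible as $\M\to\M/_{F_b}$ is onto. Evaluating the collation condition at $r=a_i\leq b$ (using $f_i\restriction a_i=f_i$) yields $[\tau]_{F_{a_i}}=f\restriction a_i=f_i=[\tau_i]_{F_{a_i}}$, i.e. $\Qp{\tau=\tau_i}\geq a_i$ for all $i$. Since every antichain of $\bool{B}$ has size at most $|\bool{B}|$, this shows $\M$ has the mixing property.

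The only genuinely delicate point is the bookkeeping in the first direction: one must replace the given covering (which need not be an antichain) by the antichain refinement in order to invoke mixing, and then verify that the resulting $\tau$ works against \emph{each} original $b_i$. This is exactly where the compatibility of the family — vacuous on an antichain but carrying real content for a general covering — and the distributivity of the complete Boolean algebra $\bool{B}$ are both needed; the converse direction is then immediate once the definitions are unwound.
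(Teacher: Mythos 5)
Your proof is correct and follows essentially the same route as the paper: replace the given complete covering by its antichain disjointification via a well-order, apply the mixing property to that antichain, verify that the mixed element collates the family, and obtain the converse from the vacuously compatible family indexed by an antichain whose collation witnesses mixing. The only cosmetic difference is that you establish $\Qp{\tau=\tau_i}\geq b_i$ by a direct distributivity computation (using the standard fact $\bigvee_j a_j=\bigvee_j b_j$), whereas the paper runs a transfinite induction along the well-order; both verifications are sound.
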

\begin{proof}
Assume that $\M$ has the mixing property.
Let $b\in\bool{B}$ and let $X=\bp{b_i: i\in I}$ with $\bigvee_{i\in I}b_i=b$ and $\bp{f_i:i\in I}$ be a matching family such that $f_i=[\sigma_i]_{F_{b_i}}\in \F_\M(b_i)=\M/_{F_{b_i}}$ for all $i\in I$.

By Zorn's Lemma, we can find $A\subseteq \downarrow X$ maximal antichain. Then $\bigvee A=\bigvee X=b$ (for example by \cite[Fact 3.11.7]{viale-bookonforcing})
and for each $a\in A$ there is some $i_a\in I$ such that $a\leq b_{i_a}$.
%
Now (since $\bp{f_i:i\in I}$ is matching) for all
$i\neq j$ in $I$ 
\begin{equation}
\label{cond:comp}
f_i\restriction b_i\wedge b_j=f_j\restriction b_i\wedge b_j.
\end{equation}
Condition \eqref{cond:comp} can be rewritten as
\[
\Qp{\sigma_i=\sigma_j}\geq b_i\wedge b_j\quad\textit{ for every $i, j\in I$}.
\]
 Let $g_a:=f_{i_a}\restriction a$ for all $a\in A$; then
$g_a=[\sigma_{i_a}]_{F_{a}}$ for all $a\in A$.
Since $\M$ satisfies the mixing property, there exists $\tau\in\M$ such that $\Qp{\tau=\sigma_a}\geq a$ for every $a\in A$.
Then
\[
\Qp{\tau=\tau_i}\geq\bigvee_{a\in A}(\Qp{\tau=\sigma_{i_a}}\wedge\Qp{\sigma_{i_a}=\sigma_i})\geq \bigvee_{a\in A}(a \wedge (b_{i_a}\wedge b_i))=(\bigvee A) \wedge b_i=b\wedge b_i=b_i.
\]
Hence $[\tau]_{F_b}$ is the desired collation of $\bp{f_i:i\in I}$.

This argument shows that $\F_\M$ is a sheaf for the Sup Grothendieck topology on $\bool{B}$.

Conversely, suppose $\F_\M$ is a sheaf for the Sup Grothendieck topology on $\bool{B}$. 
 Let $A$ be an antichain in $\bool{B}$ and let $\sigma_a\in\M$ for every $a\in A$.
 Now for $a\neq a'$ in $A$, since $A$ is an antichain, $a\wedge a'=0$, and so it is clear that 
$\F_\M(a\wedge a'\leq a)([\sigma_a]_{F_a})=\F_\M(a\wedge a'\leq a')([\sigma_{a'}]_{F_{a'}})$, as both are the unique element of $\M/_{F_{0_\bool{B}}}$.

Let $b:=\bigvee A$.
Being $\F_\M$ a sheaf, there exists $\tau\in\M$ such that $\F_\M(a\leq b)([\tau]_{F_b})=[\sigma_a]_{F_a}$ for all $a\in A.$ This is equivalent to say that $\Qp{\tau=\sigma_a}\geq a$ for every $a\in A$. 
Hence $\M$ satisfies the mixing property.
\end{proof}

\subsection{Stalks on presheaves corresponds to Tarski quotients of Boolean valued models}
We do not expand (for the moment) on the transfer of the model theoretic structure associated to $\LL$ on $\M$ to $\F_\M$, but it is to be expected that this can be done "coherently". In order to appreciate why this is the case we show now that this procedure allows to identify the "stalks" of $\F_\M$ with the quotients of $\M$ by an ultrafilter, as shown in the following: 

\begin{notation}\label{not:extRO(X)toO(X)}
Let $\bool{B}$ be a complete boolean algebra.
A  $\F:\bool{B}^{\mathrm{op}}\to\dSet$ is naturally extended to a topological presheaf $\F^*$ defined on the open sets of $\St(\bool{B})$ (as in \cite[Chapter II]{MacLMoer}) by letting, for every open $U\subseteq\St(\bool{B})$, 
and for every open sets $U\subseteq V\subseteq \St(\bool{B})$
\[\F^*(U\subseteq V):=\F(\Reg(U)\subseteq\Reg(V)),\] 
where in both cases we are identifying the regular open subsets of $\St(\bool{B})$ with the elements of $\bool{B}$.
\end{notation}

\begin{remark}\label{rem:stalks=quots}
Fix $\bool{B}$ a complete Boolean algebra and $\M$ a $\bool{B}$-valued model. 
Let us define the "categorical translation" of the interpretation in $\M$ of the symbols of $\LL$ by setting:
\begin{itemize}
\item $\mathcal{F}_\M^R(b)=\bp{([\tau_1]_{F_b},\dots,[\tau_n]_{F_b}):\Qp{R(\tau_1,\dots,\tau_n)}\geq b}$ for all $b\in\bool{B}$ and all $n$-ary relation symbols $R$ of $\LL$;
\item $\mathcal{F}_\M^c(b)=[c^\M]_{F_b}$ for all constants symbols $c$ of $\LL$ and for all $b\in\bool{B}$.
\end{itemize}

For the specific case of the functor $\F_\M$ 
We aim to show that there is a natural identification of the stalks of it extension $\F^*_\M$ (as defined in Notation \ref{not:extRO(X)toO(X)}) at $G\in\St(\bool{B})$ (as defined in \cite[Chapter II]{MacLMoer}) with the Tarski $\LL$-structure $\M/_G$.

We can do this as follows:
First of all, translating  in our setting the definition of stalk (as defined in \cite[Chapter II]{MacLMoer}) at the point $G\in\St(\bool{B})$ of the topological presheaf $\F^*_\M$, we have that 
\[
(\F^*_\M)_G:=\Bigl(\bigsqcup_{b\in G}\M/_{F_b}\Bigr)/_{\sim_G},
\]
 where 
\[
 [\sigma]_{F_b}\sim_G[\tau]_{F_c}\quad\textit{if and only if}\quad[\sigma]_{F_{b\wedge c}}=
 [\tau]_{F_{b\wedge c}}.
 \]
Next we observe that for each $\sigma\in\M$ and $b\in G$
\begin{align*}
[\sigma]_{F_b}&=
\bp{\tau\in\M: \Qp{\sigma=\tau}\geq b}.
\end{align*}
In particular the map $\Theta_G:\sigma\mapsto [[\sigma]_{F_{1_{\bool{B}}}}]_{\sim_G}$ 
defines a surjection of $\M$ onto 
$(\F^*_\M)_G$ with 
\[
\begin{aligned}
\Theta_G(\sigma)=\Theta_G(\tau)&\Longleftrightarrow[[\sigma]_{F_{1_{\bool{B}}}}]_{\sim_G}=[[\tau]_{F_{1_{\bool{B}}}}]_{\sim_G}\\
&\Longleftrightarrow[\sigma]_{F_b}=[\tau]_{F_b}\textit{ for some }b\in G\\
&\Longleftrightarrow\Qp{\sigma=\tau}\geq b\textit{ for some }b\in G\\
&\Longleftrightarrow\Qp{\sigma=\tau}\in G.
\end{aligned}
\]
This shows that $\Theta_G(\sigma)=\Theta_G(\tau)$ if and only if $\M/_G\models [\sigma]_G=[\tau]_G$.\\
In particular we can identify (as models for the pure equality language) the stalk $(\F^*_\M)_G$ and the Tarski quotient $\M/_G$ of $\M$ at $G$ via the map 
$[[\tau]_{F_{1_\bool{B}}}]_{\sim_G}\mapsto [\tau]_G$.

Now we have a natural interpretation of the symbols $-$ of $\LL$ in $(\F^*_\M)_G$ which is either obtained by "passing to the quotient" $\F_\M^{-}$ by $\sim_G$ or by taking their interpretation on $\M/_G$ and moving them to $(\F^*_\M)_G$ via the above map, in either case we obtain that the interpretation of an $n$-ary relation symbol $R$ is given by
\[
[\mathcal{F}_\M^R(1_\bool{B})]_{\sim_G}=\bp{([[\tau_1]_{F_{1_\bool{B}}}]_{\sim_G},\dots,[[\tau_n]_{F_{1_\bool{B}}}]_{\sim_G}):\Qp{R(\tau_1,\dots,\tau_n)}\in G},
\]
and
\[
[\mathcal{F}_\M^c(1_\bool{B})]_{\sim_G}=[[c^\M]_{F_{1_\bool{B}}}]_{\sim_G}.
\]

We have in this manner identified as Tarski $\LL$-structures the stalk of $\F^*_\M$ at $G$ and the  structure $\M/_G$.
\end{remark}

\begin{remark}
Fix a $\bool{B}$-valued model $\M$ with $\bool{B}$ a complete Boolean algebra and its associated topological presheaf $\F^*_\M$ on the open sets of $\St(\bool{B})$. 

The corresponding \'etal\'e space on $\St(\bool{B})$ is obtained as the disjoint union of the fibers $(\F_\M)_G$ for $G\in\St(\bool{B})$, which -by Remark \ref{rem:stalks=quots}- amounts to a disjoint union of the Tarski structures $\M/_G$ as $G$ varies in $\St(\bool{B})$, that is:
\[
E_\M=\bp{[\sigma]_G: \sigma\in M, G\in\St(\bool{B})}.
\]
Equivalently, we can say that
\[
E_\M=\bp{\cp{\sigma, G}: \sigma\in M\textit{ and }G\in\St(\bool{B})}/_{R_\M},
\]
where $R_\M$ is the equivalence relation asserting that $\cp{\sigma, G}R_\M\cp{\tau, H}$ if and only if 
$G=H$ and $\Qp{\sigma=\tau}\in G$.

The projection map $p:E_\M\to\St(\bool{B})$ sends each $\cp{\sigma, G}=[\sigma]_G$ to $G$.

A base for the topology of $E_\M$ is the family
\[
\mathcal{B}:=\bp{\dot{\sigma}[N_b]=\bp{[\cp{\sigma, G}]_{R_\M}: b\in G}: \sigma\in M, b\in\bool{B}}.
\]
This topology is readily Hausdorff. Indeed, if $\cp{\sigma, G}\neq\cp{\tau, H}$, either $G\neq H$ or $G=H$ and 
$\neg\Qp{\sigma=\tau}\in G$. In the first case, $\St(\bool{B})$ being Hausdorff, there exists an open 
neighborhood $N_b$ of $G$ and an open neighborhood $N_c$ of $H$ which are disjoint. Then the basic open 
sets $\dot{\sigma}[N_b]$ and $\dot{\tau}[N_c]$ separate $\cp{\sigma, G}$ from $\cp{\tau, H}$. 
Otherwise, if $G=H$, we have that $[\sigma]_G\neq[\tau]_G$. 
Let $b:=\Qp{\sigma\neq\tau}=\neg\Qp{\sigma=\tau}\in G$. 
Then, $\dot{\sigma}[N_b]$ and $\dot{\tau}[N_b]$ are disjoint open neighborhoods of 
$\cp{\sigma, G}$ and $\cp{\tau, G}$, respectively.
Moreover, for any open $U\subseteq\St(\bool{B})$ and $\sigma\in M$, the local section 
$\dot{\sigma}\restriction {U}$ is open and injective and so it is an homeomorphism on its image.
\end{remark}


\subsection{The adjunction between Boolean valued models and presheaves determined by their global sections}
 So far we have shown the following:

\begin{lemma}

There is a functor:
\begin{align*}
L:\modx\to\Presh^{\textrm{bool}}(\dSet),
\end{align*}
where:
\begin{enumerate}[(A)]
\item $\modx$ is the category of Boolean valued models for the empty language $\bp{=}$ and morphisms between them, with the condition that, if $(\Phi, i):(\M, \bool{B})\to(\N, \bool{C})$ is a morphism, $i$ is an adjoint homomorphism, while the map $\Phi$ need not be injective;
\item\label{item:defmorphpreshboolvalmod}
 $\Presh^{\textrm{bool}}(\dSet)$ is the category:
\begin{itemize}
\item
whose objects are $\dSet$-valued presheaves $\F$ on some Boolean algebra $\bool{B}$ such that $\F(1_\bool{B})$ is non-empty, and 
\item
whose arrows between $\F_0:\bool{B}^{\textrm{op}}\to\dSet, \F_1:\bool{C}^{\textrm{op}}\to\dSet$ are pairs $(\gamma, i)$ where $i:\bool{B}\to\bool{C}$ is an adjoint homomorphism of Boolean algebras and $\gamma=\{\gamma_b:\F_0(b)\to\F_1(i(b))\}_{b\in\bool{B}}$ is a family of maps such that for all $b_1\leq b_2$ the following diagrams commute:
\begin{equation}\label{eqn:diagmorphboolpresh}
\begin{CD}
\F_0(b_2) @>\gamma_{b_2}>>  \F_1(i(b_2))\\
@V{\F_0(b_1\leq b_2)}VV              @VV{\F_1(i(b_1)\leq i(b_2))}V\\
\F(b_1)@>>\gamma_{b_1}>  \F_1(i(b_1))\\
\end{CD}
\end{equation}
\end{itemize}
\item For $\M$ an object in $\modx$, its image under $L$ is the presheaf $L(\M):=\F_\M$.
\item For $(\Phi, i):\M\to \N$ is a morphism in $\modx$ with $i:\bool{B}\to\bool{C}$, its image under $L$ is the morphism $(L(\Phi), i)$ with
$L(\Phi)=\bp{L(\Phi)_b}_{b\in\bool{B}}$ such that 
\[
\begin{aligned}
L(\Phi)_b:=\Phi/_{F_b}:&\M/_{F_b}\to\N/_{F_{i(b)}}\\
&[\tau]_{F_b}\mapsto[\Phi(\tau)]_{F_{i(b)}}
\end{aligned},
\]
where $F_b$ is the filter in $\bool{B}$ generated by $b$ and $F_{i(b)}$ is the filter in $\bool{C}$ generated by $i(b)$.
\end{enumerate}
\end{lemma}

The missing details in the proof of the Lemma are left to the reader; specifically we still need to check that $L(\Phi)$ is well-defined and that the required composition and commutativity laws 
hold for $L$, making $L$ indeed a functor. 
Note that we required $\Presh^{\textrm{bool}}$ to include only the presheaves on some $\bool{B}$ admitting at least one elements on the section given by $1_\bool{B}$, because this is the minimal requirement in order that the converse functor mapping  presheaves on Boolean algebras to Boolean valued models can be meaningfully defined.

\begin{definition}
\[
R:\Presh^{\textrm{bool}}(\dSet)\to\modx
\]
is the functor defined by:
\begin{itemize}
\item For $\F$ any\footnote{Note that $R$ can be defined on arbitrary $\bool{B}$-presheaves such that $\F(1_\bool{B})\neq\emptyset$, however the pairs $\cp{L,R}$ will define an adjunction only when $R$ is restricted to the class of \emph{separated} 
presheaves determined by their global sections and of Boolean valued models for complete Boolean algebras (see Theorem \ref{thm:adjbvmodpresh}).} presheaf with $\F(1_\bool{B})\neq\emptyset$, 
$R(\F):=\M_\F$ is the $\RO(\St(\bool{B}))$-valued model whose domain is $\F(1_\bool{B})$ and such that, for $f, g\in \F(1_\bool{B})$, 
\[
\Qp{f=g}^{\M_\F}:=\Reg{\bigcup\bp{N_b\in\CLOP(\St(\bool{B})): \F(b\leq 1_\bool{B})(f)=\F(b\leq1_\bool{B})(g)}}.
\]
\item For $(\alpha, i):\F\to\G$ a natural transformation of presheaves $\F:\bool{B}^{\textrm{op}}\to\dSet, \G:\bool{C}^{\textrm{op}}\to\dSet$ such that $\F(1_\bool{B})\neq\emptyset\neq\G(1_{\bool{C}})$, and $i:\bool{B}\to\bool{C}$, $R(\alpha, i)=(R(\alpha), i)$, with $R(\alpha):=\alpha_{1_\bool{B}}$, where $\alpha_{1_\bool{B}}:\F(1_\bool{B})\to\G(1_\bool{B})$ is the map defined by
$\alpha$ at $1_\bool{B}$.
\end{itemize}
\end{definition}
The following result corresponds, in our setting, to \cite[Proposition 2.3]{MONROBIS}.
\begin{theorem}\label{thm:adjbvmodpresh}
Let $\SPresh^{\textrm{cba}}(\dSet)$ be the subcategory of $\Presh^{\textrm{bool}}(\dSet)$ given by the subfamily of $J^{\mathrm{\sup},\bool{B}}$-separated presheaves on some $\bool{B}$ which is a complete boolean algebra.

The pair $\cp{L, R}$ is an adjuction between the categories
$\SPresh^{\textrm{cba}}(\dSet)$ and $\modxx$ with $L$ being the 
left adjoint.
\end{theorem}
\begin{proof}
We have only to find the unit $\Theta$ and the counit $\varepsilon$ of the adjunction and then to verify the identities 
\begin{equation}
\label{un:coun}
\Id_R=R\varepsilon\circ\Theta R\textit{ and }\Id_L=\varepsilon L\circ L\Theta.
\end{equation}
To define the unit $\Theta: \Id_{\modxx}\to R\circ L$, we simply take, for $\M\in\modxx$ a $\bool{B}$-valued model, $(\Theta_\M,\Id_{\bool{B}})$, where
\begin{align*}
\Theta_\M:&\M\to\M/_{F_1}.\\
&\tau\mapsto[\tau]_{F_1}
\end{align*}
The fact that $\Theta$ is a natural transformation is left to the reader. Notice that $\Theta_\M$ for $\M$ extensional is the identity morphism.\\
We need now to define the counit $\varepsilon$. Here we need to use crucially the assumption that we restrict 
$R$ to the family of \emph{separated} presheaves. Towards this aim  
if $b\leq c$, we have
that the following diagram is commutative with horizontal lines being bijections:
\begin{equation}\label{comdgm:counitLXRX}
\begin{CD}
(\F(c\leq1))[\F(1)] @>\tau\restriction c\mapsto[\tau]_{c}>>  R(\F)/_{F_c}\\
@V{\tau\restriction c\mapsto\tau\restriction b}VV              @VV{[\tau]_{F_c}\mapsto [\tau]_{F_b}}V\\
(\F(b\leq 1))[\F(1)]@>>\tau\restriction b\mapsto[\tau]_{b}>  R(\F)/_{F_b}\\
\end{CD}
\end{equation}
where, by our convention, $\F(b\leq 1)(\tau)$ is equally denoted as $\tau\restriction b$.\\
The commutativity is automatic by definition of the various maps occurring in the diagram.
To see why the horizontal lines are bijections, observe that
$[\tau]_{F_b}=[\sigma]_{F_b}$ for $\tau,\sigma\in\F(1)$ and $b\in\bool{B}$
if and only if $b\leq\Qp{\tau=\sigma}^{R(\F)}$ if and only if\footnote{The left to right implication of this equivalence uses that
$\F$ is separated, otherwise we can only assert that the set of $d\leq b$ such that $\F(d\leq1)(\sigma)= \F(d\leq 1)(\tau)$
is a dense cover of $b$, but $b$ may not belong to this dense cover if $\F$ is not separated.}
\[
\F(b\leq1)(\sigma)= \F(b\leq 1)(\tau).
\]
In particular we conclude that:
\begin{equation}\label{eqn:LXCIRCRX0}
((L\circ R)(\F))(b):=R(\F)/_{F_b}\cong \F(b\leq 1)[\F(1)]\subseteq\F(b),
\end{equation}
and
\begin{align}\label{eqn:LXCIRCRX1}
((L\circ R)(\F))(b\leq c)&=
\bp{[\tau]_{c}\mapsto [\tau]_{b}: \tau \in \F(1)}\cong\\ \nonumber
&\cong \F(b\leq c)\restriction (\F(b\leq 1)[\F(1)])\\ \nonumber
&\subseteq \F(b\leq c).
\end{align}
Now we let $\varepsilon$ be the natural transformation which in each component $\F:\bool{B}^{\textrm{op}}\to\dSet$ is the morphism $(\epsilon(\F), \Id_{\bool{B}})$, where $\epsilon(\F)$ is defined in the following way:
\begin{itemize}
\item for any $b\in\bool{B}$
\begin{align*}
\varepsilon(\F)(b):\F(1)/_{F_b}&\to\F(b\leq 1)[\F(1)]\\
 [\tau]_b&\mapsto(\tau\restriction b);
\end{align*} 
\item for any $b\leq c\in\bool{B}$,
$\varepsilon(\F)(b\leq c)$ transfers via the horizontal lines of diagram (\ref{comdgm:counitLXRX})
the mapping $\bp{[\tau]_c\mapsto[\tau]_b:\tau\in \F(1)}$ from $\F(1)/_{F_c}$ to $\F(1)/_{F_b}$ to the mapping $\F(b\leq c)\restriction \bp{\tau\restriction c: \tau\in \F(1)}$ (where the latter set is exactly $\F(c\leq 1)[\F(1)]$).
\end{itemize}
The morphism $\varepsilon_\F$ is a well defined natural transformation of $L\circ R$ with 
$\Id_{\SPresh^{\textrm{cba}}(\dSet)}$ due to the commutativity of diagram (\ref{comdgm:counitLXRX}).\\ 
Equations \eqref{un:coun} characterizing the unit and counit properties of $\Theta,\varepsilon$ with respect to $L,R$ 
are satisfied due to the following observations:
\begin{itemize}
\item 
The unit $\Theta$ restricted to the image of the functor $R$ is the identity: 
the target of $R_\bool{B}$ is the class of $\bool{B}$-valued extensional models. 
\item
The counit $\varepsilon$ restricted to the image of the functor 
$L$ is an isomorphism with the identity functor on the image of $L$\footnote{Here considering complete Boolean algebras is essential, otherwise $\F:\bool{B}^{\textrm{op}}\to\dSet$ and $(L\circ R)(\F):\RO(\bool{B})^{\textrm{op}}\to\dSet$ would be defined on differerent Boolean algebras, and thus they could not be isomorphic.}: for any $\bool{B}$-valued model $\M$,
$L(\M)$ is a presheaf $\F_\M:\bool{B}^{\textrm{op}}\to\dSet$ in which $\F_\M(b)=\M/_{F_b}$ is the surjective image of $\F_\M(1)=\M/_{F_1}$ via
$\F_\M(b\leq 1):[\tau]_{F_1}\mapsto[\tau]_{F_b}$; in particular for $\F_\M$ the last inclusions of (\ref{eqn:LXCIRCRX0}) and (\ref{eqn:LXCIRCRX1}) are reinforced to equalities, and we can conclude that $\varepsilon$ is an isomorphism appealing to the fact that the horizontal lines of the commutative diagram (\ref{comdgm:counitLXRX}) are bijections\footnote{Here we use essentially that $R$ is applied to 
a separated presheaf. Otherwise we could have different sections $f,g$ in 
$\F(1)$ such that $\Qp{f=g}_{R_\bool{B}(\F)}=1$; in which case $[f]_{F_1}=[g]_{F_1}$, hence the horizontal lines in (\ref{comdgm:counitLXRX}) are not anymore bijections. It is not clear then how to select canonically the representatives in the equivalence classes of $[-]_{F_1}$ so to maintain the requested naturality properties for the counit given by (\ref{eqn:LXCIRCRX0}) and (\ref{eqn:LXCIRCRX1}).}.
\end{itemize}
\end{proof}
\begin{corollary}\label{cor:adjbvmodpresh}
\label{eq:ex}
Let
\begin{itemize}
\item $\exPresh^{\textrm{cba}}(\dSet)$ be the full subcategory of $\SPresh^{\textrm{cba}}(\dSet)$ 
given by the separated presheaves 
which are determined by their global sections,
\item 
$\exmodxx$ be the full subcategory of $\modxx$ generated by extensional models.
\end{itemize}
The adjunction 
\[
\cp{L:\modxx\to\SPresh^{\textrm{cba}}( \dSet), R:\SPresh^{\textrm{cba}}(\dSet)\to\modxx}
\] 
specializes to an equivalence of categories between 
$\exmodxx$ and $\exPresh^{\textrm{cba}}(\dSet)$.
\end{corollary}
Note that every $\bool{B}$-valued model $\M$ is Boolean equivapent to the extensional 
model $\M/_{F_{1_{\bool{B}}}}$.
Hence the corollary amounts to say that $\bool{B}$-valued models  identify the class of separated
presheaves on $\bool{B}$ in 
which the local sections are always restrictions of global sections.

Note also that any sheaf $\F$ on $\bool{B}$ for the sup Grothendieck topology with $\F(1_\bool{B})\neq \emptyset$ is such that every local section is the restriction 
of a global section: for each $b\in\bool{B}$ let $f_b$ be some section in $\F(b)$;
given any local section $g$ in $\F(c)$ the family $\bp{g,f_{\neg c}}$ is matching, 
and a collation of this family is a global section whose restriction to $c$ is $g$.

\begin{corollary}
The equivalence of categories between $\exmodxx$ and $\exPresh^{\textrm{cba}}(\dSet)$ induces an equivalence of categories between the full subcategory of $\exmodxx$ generated by the models satisfying the mixing property and the full subcategory $\exSh^{\textrm{cba}}(\dSet)$ of $\exPresh^{\textrm{cba}}(\dSet)$ generated by the sheaves $\F:\bool{B}^{\textrm{op}}\to\dSet$ (according to the sup Grothendieck topology) such that $\F(1_\bool{B})\neq \emptyset$.
\end{corollary}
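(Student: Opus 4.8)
The plan is to transport the mixing property across the equivalence $\ap{L,R}$ of Corollary \ref{cor:adjbvmodpresh}, using Proposition \ref{chr:mix} as the dictionary between the mixing property on the model side and the sheaf condition (for the sup Grothendieck topology) on the presheaf side. Recall that, restricted to $\exmodxx$ and $\exPresh^{\textrm{cba}}(\dSet)$, the functors $L$ and $R$ are mutually quasi-inverse: the unit $\eta$ is the identity on extensional models and the counit $\varepsilon$ is a natural isomorphism on $\exPresh^{\textrm{cba}}(\dSet)$. Both the class of models with the mixing property inside $\exmodxx$ and the class of objects of $\exSh^{\textrm{cba}}(\dSet)$ are closed under isomorphism (the mixing property depends only on the interpretation of $=$), so it suffices to check that $L$ and $R$ exchange these two classes of objects up to isomorphism; fullness and faithfulness are then inherited from the ambient equivalence, and the restrictions of $\eta$ and $\varepsilon$ witness the restricted equivalence.

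For one direction, let $\M\in\exmodxx$ have the mixing property. Since $L$ restricted to $\exmodxx$ takes values in $\exPresh^{\textrm{cba}}(\dSet)$, the presheaf $\F_\M=L(\M)$ is separated and satisfies $\F_\M(b)=\F_\M(b\leq 1_{\bool{B}})[\F_\M(1_{\bool{B}})]$; by Proposition \ref{chr:mix} it is moreover a sheaf for the sup Grothendieck topology, hence $\F_\M\in\exSh^{\textrm{cba}}(\dSet)$. For the other direction, let $\F\in\exSh^{\textrm{cba}}(\dSet)$ be a presheaf on $(\bool{B}^+)^{\textrm{op}}$ with $\bool{B}$ complete. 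Then $R(\F)=\M_\F$ is an extensional $\bool{B}$-valued model, and the counit yields an isomorphism $\varepsilon_\F\colon L(R(\F))=\F_{\M_\F}\to\F$ in $\SPresh^{\textrm{cba}}(\dSet)$. An isomorphism in this category is a pair $(\gamma,i)$ with $i$ a boolean isomorphism and each component $\gamma_b$ a bijection commuting with the restriction maps, so it carries compatible families to compatible families and collations to collations; hence $\F_{\M_\F}$ is again a sheaf for the sup Grothendieck topology, and Proposition \ref{chr:mix} gives that $\M_\F=R(\F)$ has the mixing property.

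Combining the two directions, $L$ and $R$ restrict to functors between the full subcategory of $\exmodxx$ on the models with the mixing property and $\exSh^{\textrm{cba}}(\dSet)$, and the natural isomorphisms $\eta$ and $\varepsilon$ restrict accordingly, yielding the claimed equivalence of categories. The only point that is not pure diagram chasing already performed in the proofs of Theorem \ref{thm:adjbvmodpresh} and Proposition \ref{chr:mix} is the stability of the sheaf condition under isomorphisms of $\SPresh^{\textrm{cba}}(\dSet)$ used in the converse direction; I expect this to be the main, though still routine, obstacle.
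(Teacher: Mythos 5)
Your proposal is correct and follows exactly the route the paper intends (the corollary is stated there without proof as an immediate consequence of Corollary \ref{eq:ex} combined with Proposition \ref{chr:mix}): transport the mixing property through the equivalence $\ap{L,R}$, using Proposition \ref{chr:mix} as the dictionary and the counit isomorphism $\varepsilon_\F$ (whose boolean component is $\Id_{\bool{B}}$, hence preserves complete coverings) for the converse direction. The closure of the sheaf condition under isomorphisms of $\SPresh^{\textrm{cba}}(\dSet)$, which you flag as the only nontrivial point, is indeed routine for exactly the reason you give.
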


\begin{corollary}\label{char:Gamma0=Gamma1}
Let $\bool{B}$ be a complete Boolean algebra.
A $\bool{B}$-valued model $\M$ has the mixing property if and only if every global section of the \'etal\'e space 
$E_\M$ is a section induced by an element of $\M$.
\end{corollary}


\subsection{A characterization of the fullness property using \'etal\'e spaces}
\label{sec:charfulletalespace}

We can also give a nice characterization of the fullness property for Boolean valued models as in Definition \ref{def:fullness-wellbehaved} in sheaf theoretic terms, along the lines of what we did for the mixing property in Corollary \ref{char:Gamma0=Gamma1}. We will need the equivalent conditions for the fullness property presented in Theorem \ref{thm:charLosThm}.
\begin{remark}A well behaved $\bool{B}$-valued model $\M$ for the language $\LL$ is full if and only if, 
for every $\LL_\M$-formula $\nphi(x_1, \dots, x_n)$ with $n$ free variables $x_1, \dots, x_n$, there exists a finite number $m$ of $n$-tuples 
$\cp{\sigma_1^{(1)}, \dots, \sigma_n^{(1)}}, \dots, \cp{\sigma_1^{(m)}, \dots, \sigma_n^{(m)}}\in M^n$ such that
\[
\Qp{\exists x_1, \dots, \exists x_n\nphi(x_1, \dots, x_n)}=\bigvee_{i=1}^m\Qp{\nphi\cp{\sigma_1^{(i)}, \dots, \sigma_n^{(i)}}}.
\]
Indeed, assume $\M$ to be full. For sake of simplicity we assume $n=2$. Define $\psi(x_1):=\exists x_2\nphi(x_1, x_2)$. Since $\M$ is full, we can find $\sigma^{(1)}_1, \dots, \sigma_1^{(m)}$ such that 
\[
\Qp{\exists x_1\psi(x_1)}=\bigvee\bp{\Qp{\psi\cp{\sigma_1^{(i)}}}: i=1, \dots, m}.
\] 
Now define $\psi_i(x_2):=\nphi\cp{\sigma_1^{(i)}, x_2}$. Again, being $\M$ full, for every $i=1, \dots, m$ we can find $\sigma_2^{(i, 1)}, \sigma_2^{(i, m_i)}$ such that $\Qp{\exists x_2\psi_i(x_2)}=\bigvee\bp{\Qp{\psi_i\cp{\sigma_2^{(i, j)}}}: j=1, \dots, m_i}$. Putting all together,
\[
\Qp{\exists x_1\exists x_2\nphi(x_1, x_2)}=\bigvee_{i=1}^m\Qp{\exists x_2\cp{\sigma_1^{(i)}, x_2}}=\bigvee_{i=1}^m\bigvee_{j=1}^{m_i}\Qp{\nphi\cp{\sigma_1^{(1)}, \sigma_2^{(i, j)}}},
\]
and thus $\cp{\sigma_1^{(i)}, \sigma_2^{(i, j)}}$ for $j=1, \dots m_i$ and $i=1, \dots, m$ are the desired couples.

\end{remark}

Now fix an $\LL_\M$-formula $\nphi(x_1, \dots, x_n)$ with $n$-free variables and let:
\begin{itemize}
\item
 $b_\nphi:=\Qp{\exists x_1, \dots, x_n\nphi(x_1, \dots, x_n)}$.
 \item
 $D_\nphi:=\bp{N_c: \,\exists \sigma_1,\dots,\sigma_n\, 0_\bool{B}<c\leq 
 \Qp{\nphi(\sigma_1, \dots, \sigma_n)}}$
 \item
 $A_\nphi:=\bigcup D_\nphi=\bigcup\bp{N_c:0_\bool{B}<c\leq\Qp{\nphi(\sigma_1, \dots, \sigma_n)},\, \sigma_1,\dots,\sigma_n\in\mathcal{M}}$.
 \end{itemize}
  We define an \'etal\'e space $E^\nphi_\M$ over 
$N_{b_\nphi}$ 
by:
\[
E^\nphi_\M:=\bp{\cp{\sigma_1, \dots, \sigma_n, G}: \sigma_1, \dots, \sigma_n\in M, \Qp{\nphi(\sigma_1, \dots, \sigma_n)}\in G\in \St(\bool{B})}/_R,
\]
where $R$ is the equivalence relation such that $\cp{\sigma_1, \dots, \sigma_n, G}\mathrel{R}\cp{\tau_1, \dots, \tau_n, H}$ if and only if $G=H$ and $\Qp{\sigma_i=\tau_i}\in G$ for every $i=1, \dots, n$.\\
We can equivalently say that
\[
E^\nphi_\M=\bp{\cp{[\sigma_1]_G, \dots, [\sigma_n]_G}: \sigma_1, \dots, \sigma_n\in M, \Qp{\nphi(\sigma_1, \dots, \sigma_n)}\in G\in\St(\bool{B})}.
\]
This set is mapped into $N_{b_\nphi}$ by the function $p_\nphi:\cp{\sigma_1, \dots, \sigma_n, G}\mapsto G$ ( notice that, if there are some $\sigma_1, \dots, \sigma_n\in M$ such that $\Qp{\nphi(\sigma_1, \dots, \sigma_n)}\in G$, then $b_\nphi\in G$).\\
The important observation is that, in general, $p_\nphi$ is not surjective on $N_{b_\nphi}$; 
its image is just the dense open subset $A_\nphi$
of $N_{b_\nphi}$.\footnote{If $b_\nphi$ is a supremum but not a maximum of $\bp{\Qp{\nphi(\sigma_1, \dots, \sigma_n)}: \,\sigma_1,\dots,\sigma_n\in\M}$, there is 
$G\in N_{b_\nphi}\setminus A_\nphi;$
this $G$ is not in the target of $p_\nphi$.}  \\
The topology of $E^\nphi_\M$ is the one obtained as a subspace of $(E_\M)^n$: a base for the topology of $E^\nphi_\M$ is
\[
\mathcal{B}^\nphi:=\bp{\bp{\cp{\sigma_1, \dots, \sigma_n, G}: \Qp{\nphi(\sigma_1, \dots, \sigma_n)}\in G\in N_c}: \sigma_1, \dots, \sigma_n\in M, N_c\subseteq N_{b_\nphi}}.
\]
It is easy to check that $E^\nphi_\M$ equipped with this topology renders continuous the map $p_\nphi$. 
Moreover, $p_\nphi:E^\nphi_\M\to N_{b_\nphi}$ is the bundle (e.g induced by $\Lambda^{\RO(\bigcup D_\nphi)}$ and according to \cite[II]{MacLMoer}) associated to the \'etal\'e space of the presheaf $\G^\nphi_\M:D_{\nphi}^{\textrm{op}}\to\dSet$ defined as follows:
\[
\G^\nphi_\M(c):=\bp{\cp{[\sigma_1]_{F_c}, \dots, [\sigma_n]_{F_c}}: \sigma_1, \dots, \sigma_n\in M\textit{ and }c\leq\Qp{\nphi(\sigma_1, \dots, \sigma_n)}}.
\]
Notice by the way that
 each $\cp{[\sigma_1]_{F_c}, \dots, [\sigma_n]_{F_c}}\in\G^\nphi_\M(N_c)$ defines a continuous open section
\[
(\dot{\sigma}_1\times\dots\times\dot{\sigma}_n): G\mapsto\cp{[\sigma_1]_G, \dots, [\sigma_n]_G, G}
\]
 of $p_{\nphi}$ over $N_c$.\\ 
 We can now give a local characterization (i.e. formula by formula) of the fullness property.
\begin{definition}
Given a $\bool{B}$-valued model $\mathcal{M}$ for $\mathcal{L}$, an
$\mathcal{L}$-formula $\nphi$ is \emph{$\M$-full} if $A_\nphi=N_{b_\nphi}$.
\end{definition}
\begin{theorem}
\label{chr:ful}
Let $\bool{B}$ be a Boolean algebra and let $\M$ be a well behaved $\bool{B}$-valued model for the language $\LL$. The following are equivalent:
\begin{enumerate}
\item \label{chr:ful1} $\M$ is full;
\item\label{chr:ful2} $\nphi$ is $\M$-full
  for every $\LL_\M$-formula $\nphi$;
\item \label{chr:ful3} $A_\nphi$ is closed for every $\LL_\M$-formula $\nphi$;
\item\label{chr:ful4}  for every $\LL_\M$-formula $\nphi$ such that $E^\nphi_\M$ is non-empty, $E^\nphi_\M$ has at least one 
global section.
\end{enumerate}
Moreover if $\M$ has the $<\omega$-mixing property the above are also equivalent to
\begin{enumerate}
\setcounter{enumi}{4}
\item\label{chr:ful5} for every $\LL_\M$-formula $\nphi(x_1, \dots, x_n)$ such that $E^\nphi_\M$ is non-empty, $E^\nphi_\M$ has at least one 
global section of the form $\dot{\sigma}_1\times\dots\times\dot{\sigma}_n$.
\end{enumerate}
\end{theorem}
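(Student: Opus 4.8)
The plan is to prove the cycle of implications $(\ref{chr:ful1})\Rightarrow(\ref{chr:ful2})\Rightarrow(\ref{chr:ful3})\Rightarrow(\ref{chr:ful4})\Rightarrow(\ref{chr:ful1})$, and then handle the extra clause $(\ref{chr:ful5})$ under the $<\omega$-mixing hypothesis. The work done in Theorem \ref{thm:charLosThm} already reduces fullness to the statement that for each $\LL_\M$-formula $\nphi$ the supremum $b_\nphi=\Qp{\exists\overline{x}\,\nphi(\overline{x})}=\bigvee\bp{\Qp{\nphi(\overline{\sigma})}:\overline{\sigma}\in M^n}$ is attained by a \emph{finite} subfamily; so the main content is translating that finite-attainment condition into the topological/sheaf-theoretic language of $A_\nphi$ and of global sections of $E^\nphi_\M$.

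First, for $(\ref{chr:ful1})\Rightarrow(\ref{chr:ful2})$: assume $\M$ full. By Theorem \ref{thm:charLosThm} there are $\overline{\sigma}^{(1)},\dots,\overline{\sigma}^{(m)}$ with $b_\nphi=\bigvee_{i=1}^m\Qp{\nphi(\overline{\sigma}^{(i)})}$. Since $\bool{B}$ (or rather $\RO(\bool{B}^+)$, but we work in $\St$) is such that $N_{b_\nphi}=N_{\bigvee_i c_i}\supseteq\bigcup_i N_{c_i}$ with the $N_{c_i}$ clopen, and since a finite union of clopens is clopen hence closed, we get $\bigcup_{i=1}^m N_{\Qp{\nphi(\overline{\sigma}^{(i)})}}=N_{b_\nphi}$ by compactness of $N_{b_\nphi}$ together with the density of $A_\nphi$ in $N_{b_\nphi}$ (each $N_{c_i}\subseteq A_\nphi$, so $A_\nphi\supseteq\bigcup_i N_{c_i}$, and $\bigcup_i N_{c_i}$ is closed and dense in $N_{b_\nphi}$, hence equals it); therefore $A_\nphi=N_{b_\nphi}$, i.e. $\nphi$ is $\M$-full. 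The implication $(\ref{chr:ful2})\Rightarrow(\ref{chr:ful3})$ is immediate: $A_\nphi=N_{b_\nphi}$ is clopen, in particular closed. For $(\ref{chr:ful3})\Rightarrow(\ref{chr:ful4})$: if $E^\nphi_\M$ is non-empty then $A_\nphi=p_\nphi[E^\nphi_\M]$ is non-empty; it is always dense open in $N_{b_\nphi}$ (this is the ``important observation'' preceding the theorem), and if it is also closed in $N_{b_\nphi}$ then $A_\nphi=N_{b_\nphi}$, so $p_\nphi$ is a \emph{surjective} bundle over the compact space $N_{b_\nphi}$. One then builds a global section: the $\dot{\sigma}_1\times\dots\times\dot{\sigma}_n$ for $\overline{\sigma}\in\G^\nphi_\M(N_c)$ are local sections whose images form a base of the étalé space $E^\nphi_\M$, and since $p_\nphi$ is surjective étalé over the compact $0$-dimensional $N_{b_\nphi}$ one can patch a maximal pairwise-compatible family of such local sections over a maximal antichain of clopens covering $N_{b_\nphi}$; compatibility over the (empty) overlaps is automatic on an antichain, so the union is already a section defined on a dense open set, and its values stay inside the closed discrete stalks, giving an honest continuous (indeed locally-section) global section of $p_\nphi$ defined everywhere on $N_{b_\nphi}$ because $A_\nphi=N_{b_\nphi}$. (Here I would imitate the gluing argument used for $\Gamma^1$ in Proposition \ref{gamma1:sheaf} and Corollary \ref{cor:adjbvmodpresh}, but note that since $A_\nphi$ is all of $N_{b_\nphi}$ no point-at-infinity is needed.)

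For $(\ref{chr:ful4})\Rightarrow(\ref{chr:ful1})$: let $s:N_{b_\nphi}\to E^\nphi_\M$ be a global section. For each $G\in N_{b_\nphi}$, $s(G)\in E^\nphi_\M$ so $s(G)=\ap{[\sigma_1^G]_G,\dots,[\sigma_n^G]_G}$ for some witnesses with $\Qp{\nphi(\overline{\sigma}^G)}\in G$. Continuity of $s$ means that around each $G$ there is a basic clopen $N_{c_G}\subseteq N_{b_\nphi}$ on which $s$ agrees with the local section $\dot{\sigma}_1^G\times\dots\times\dot{\sigma}_n^G$, which forces $c_G\leq\Qp{\nphi(\overline{\sigma}^G)}$. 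So $\bp{N_{c_G}:G\in N_{b_\nphi}}$ is an open cover of the compact set $N_{b_\nphi}$; extract a finite subcover $N_{c_{G_1}},\dots,N_{c_{G_m}}$, and then $b_\nphi=\bigvee_{i=1}^m c_{G_i}\leq\bigvee_{i=1}^m\Qp{\nphi(\overline{\sigma}^{G_i})}\leq b_\nphi$, giving exactly the finite-attainment condition of Theorem \ref{thm:charLosThm}, hence $\M$ is full. Finally, for the last clause under $<\omega$-mixing: $(\ref{chr:ful5})\Rightarrow(\ref{chr:ful4})$ is trivial. For $(\ref{chr:ful1})\Rightarrow(\ref{chr:ful5})$, take the finite partition of $b_\nphi$ into an antichain $a_1,\dots,a_m$ with $a_i\leq\Qp{\nphi(\overline{\sigma}^{(i)})}$ obtained as in the proof of Proposition \ref{ful:mod}; then apply finite ($<\omega$-)mixing coordinatewise: for each $j\le n$ the elements $\sigma_j^{(1)},\dots,\sigma_j^{(m)}$ together with the finite antichain $a_1,\dots,a_m$ (completed by $\neg b_\nphi$ assigned to an arbitrary element) give a mix $\sigma_j\in M$ with $a_i\le\Qp{\sigma_j=\sigma_j^{(i)}}$. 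Then $a_i\le\Qp{\nphi(\overline{\sigma})}$ for all $i$, so $\bigvee_i a_i=b_\nphi\le\Qp{\nphi(\overline{\sigma})}$, and $\dot{\sigma}_1\times\dots\times\dot{\sigma}_n$ is a global section of $E^\nphi_\M$ of the required form (defined on all of $N_{b_\nphi}$ since its image is a section and $b_\nphi\le\Qp{\nphi(\overline\sigma)}$).

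\textbf{Main obstacle.} The routine parts are the compactness arguments in $(\ref{chr:ful1})\Rightarrow(\ref{chr:ful2})$ and $(\ref{chr:ful4})\Rightarrow(\ref{chr:ful1})$, which are essentially a reorganization of the proof of Theorem \ref{thm:charLosThm}. The genuinely delicate step is $(\ref{chr:ful3})\Rightarrow(\ref{chr:ful4})$: producing an actual \emph{global} section of the bundle $p_\nphi$ out of the mere knowledge that its image is closed, hence all of $N_{b_\nphi}$. The point to be careful about is that $E^\nphi_\M$ is not a sheaf-bundle in the sense of $\Gamma^0$ over an arbitrary base — one must check that a maximal pairwise-compatible family of basic local sections $\dot{\sigma}_1\times\dots\times\dot{\sigma}_n$ actually covers $N_{b_\nphi}$ (here surjectivity of $p_\nphi$, i.e. $A_\nphi=N_{b_\nphi}$, is exactly what is needed) and that the glued map lands in the discrete closed stalks so that it is continuous and a section; this is where I would invoke the étalé-space machinery of Proposition \ref{prop:charLambda1} and the gluing pattern of Proposition \ref{gamma1:sheaf}, being careful that on an antichain the compatibility condition is vacuous so no point at infinity is required. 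If one wanted to shortcut, one could instead prove $(\ref{chr:ful3})\Rightarrow(\ref{chr:ful1})$ directly (closedness of the dense $A_\nphi$ in compact $N_{b_\nphi}$ forces $A_\nphi=N_{b_\nphi}$, and then a finite-subcover argument on the clopen cover $D_\nphi$ gives finite attainment), and recover $(\ref{chr:ful4})$ afterwards from $(\ref{chr:ful1})$; I expect the write-up will in fact do the cycle in whichever order minimizes the bundle-gluing bookkeeping.
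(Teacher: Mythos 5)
Most of your proposal coincides with the paper's own argument: your (\ref{chr:ful1})$\Rightarrow$(\ref{chr:ful2}) and (\ref{chr:ful4})$\Rightarrow$(\ref{chr:ful1}) are the same finite-join/compactness computations the paper uses, (\ref{chr:ful2})$\Leftrightarrow$(\ref{chr:ful3}) is immediate for both of you, and your treatment of (\ref{chr:ful5}) (finite disjointification of $b_\nphi$ plus coordinatewise $<\omega$-mixing, yielding a single tuple $\overline{\sigma}$ with $b_\nphi\leq\Qp{\nphi(\sigma_1,\dots,\sigma_n)}$) correctly supplies details the paper leaves to the reader. The genuine problem is your primary argument for (\ref{chr:ful3})$\Rightarrow$(\ref{chr:ful4}). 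A maximal antichain of clopens refining $D_\nphi$ has union that is only dense open in $N_{b_\nphi}$; it covers $N_{b_\nphi}$ if and only if it is \emph{finite}, since an infinite family of pairwise disjoint non-empty clopens can never cover a compact space (a finite subcover would have to miss the remaining members of the antichain). Surjectivity of $p_\nphi$, i.e.\ $A_\nphi=N_{b_\nphi}$, does not repair this: a section glued over the dense open union need not extend continuously to the omitted points merely because their fibers are non-empty, because continuity at such a point $G$ forces the section to coincide with a \emph{single} basic local section $\dot{\sigma}_1\times\dots\times\dot{\sigma}_n$ on an entire neighborhood of $G$, which the glued map need not do; the paper's Example \ref{ex:fullnotimplmix2} is precisely a section glued over an infinite maximal antichain exhibiting this failure. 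So the clause ``defined everywhere on $N_{b_\nphi}$ because $A_\nphi=N_{b_\nphi}$'' is a non sequitur, and as written this step fails.

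The repair is exactly the shortcut you mention at the end, and it is what the paper does: from (\ref{chr:ful3}), density of $A_\nphi$ in $N_{b_\nphi}$ (which uses well-behavedness, so that $b_\nphi\in\bool{B}$ is the supremum of the $\Qp{\nphi(\sigma_1,\dots,\sigma_n)}$) gives $A_\nphi=N_{b_\nphi}$, and compactness extracts a finite subfamily of the $N_{\Qp{\nphi(\sigma_1,\dots,\sigma_n)}}$ covering $N_{b_\nphi}$, i.e.\ fullness via Theorem \ref{thm:charLosThm}; then (\ref{chr:ful1})$\Rightarrow$(\ref{chr:ful4}) is proved by disjointifying these finitely many clopens into a finite clopen partition $C_1,\dots,C_m$ of $N_{b_\nphi}$ and setting $s\restriction C_i=\dot{\sigma}^{(i)}_1\times\dots\times\dot{\sigma}^{(i)}_n$, so no infinite gluing and no extension problem ever arises. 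With that substitution your proof is correct and follows the same route as the paper, which closes the equivalences as (\ref{chr:ful1})$\Rightarrow$(\ref{chr:ful2}), (\ref{chr:ful2})$\Rightarrow$(\ref{chr:ful1}), (\ref{chr:ful2})$\Leftrightarrow$(\ref{chr:ful3}), (\ref{chr:ful1})$\Rightarrow$(\ref{chr:ful4}), (\ref{chr:ful4})$\Rightarrow$(\ref{chr:ful2}).
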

\begin{proof}
\ref{chr:ful1} implies \ref{chr:ful2}. Indeed, If $\M$ is full, for every $\LL_\M$-formula $\nphi(x_1, \dots, x_n)$ there exist $\sigma_1^{(1)}, \dots, \sigma_n^{(1)}, \dots, \sigma_1^{(m)}, \dots, \sigma_n^{(m)}\in M$ such that
\[
\bigvee_{i=1}^n\Qp{\nphi(\sigma_1^{(i)}, \dots, \sigma_n^{(i)})}=
\Qp{\exists x_1, \dots, x_n\,\nphi(x_1, \dots, x_n)},\quad\text{ i. e. }\quad N_{b_\nphi}=\bigcup_{i=1}^m\,N_{\Qp{\nphi(\sigma_1^{(i)}, \dots, \sigma_n^{(i)})}}\subseteq A_\nphi.
\]
Conversely, assume \ref{chr:ful2}. This means that
\[
A_\nphi=\bigcup_{\sigma_1, \dots, \sigma_n\in\M}N_{\Qp{\nphi(\sigma_1, \dots, \sigma_n)}}=N_{b_\nphi}.
\]
Thus $\{N_{\Qp{\nphi(\sigma_1, \dots, \sigma_n)}}:\sigma_1, \dots, \sigma_n\in\M\}$ is an open cover of the compact space $N_{b_\nphi}$. By compactness, \ref{chr:ful1} holds.\\
The equivalence between \ref{chr:ful2} and \ref{chr:ful3} is immediate. Now we prove that \ref{chr:ful1} implies \ref{chr:ful4}. We have that
\[
N_{b_\nphi}=\bigcup_{i=1}^m\,N_{\Qp{\nphi(\sigma_1^{(i)}, \dots, \sigma_n^{(i)})}}.
\]
Consider
\[
C_1:=N_{\Qp{\nphi(\sigma_1^{(1)}, \dots, \sigma_n^{(1)})}},\qquad C_i:=N_{\Qp{\nphi(\sigma_1^{(i)}, \dots, \sigma_n^{(i)})}}\setminus\bigcup_{j=1}^{i-1}C_j, \quad i=2, \dots, m.
\]
The sets just defined are clopen, pairwise disjoint and they cover $N_{b_\nphi}$. Define $s:N_{b_\nphi}\to E^\nphi_\M$ by
\[s\restriction C_i:=\dot{\sigma}^{(i)}_1\times\dots\times\dot{\sigma}^{(i)}_n.
\]
Thus $s$ is continuous and it is a global section of $p_\nphi$.\\
Finally, assume $s$ is a section as in \ref{chr:ful4}, which means that $p_\nphi\circ s=\Id_{N_{b_\nphi}}$. In particular, $p_\nphi$ has to be surjective, and so \ref{chr:ful2} holds.\\
The missing details are left to the reader.
\end{proof}
The mixing property is strictly stronger than the fullness property. Here is a reformulation of Proposition \ref{ful:mod} in the language of bundles and sheaves:
\begin{corollary}
Assume $\bool{B}$ is a complete Boolean algebra and $\M$ is a $\bool{B}$-valued model with the mixing property. 
Then, for every $\LL_\M$-formula $\nphi(x_1, \dots, x_n)$, $\G^\nphi_\M$ is a sheaf for the sup Grothendieck topology. 
In particular, if $\M$ has the mixing property, every local section of $E^\nphi_\M$ can be extended to a global one. Moreover, each global section is induced by an $n$-tuple of elements of $\M$.
\end{corollary}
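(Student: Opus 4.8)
The plan is to first establish that $\G^\nphi_\M$ is a sheaf for the sup Grothendieck topology on $D_\nphi$ by a direct mixing argument — essentially replaying the proof of Proposition \ref{chr:mix} coordinate by coordinate — and then to read off the statements about sections of $E^\nphi_\M$ using that the mixing property implies fullness (Proposition \ref{ful:mod}), hence $A_\nphi=N_{b_\nphi}$ (Theorem \ref{chr:ful}).

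For the sheaf property, fix $c\in D_\nphi$ and a family $\bp{c_i:i\in I}$ with $\bigvee_{\bool{B}}c_i=c$ together with a compatible family $f_i=\ap{[\sigma_1^i]_{F_{c_i}},\dots,[\sigma_n^i]_{F_{c_i}}}\in\G^\nphi_\M(c_i)$. Compatibility unwinds to $\Qp{\sigma_k^i=\sigma_k^j}\geq c_i\wedge c_j$ for all $i,j,k$, and membership gives $c_i\leq\Qp{\nphi(\sigma_1^i,\dots,\sigma_n^i)}$. As in Proposition \ref{chr:mix} fix a well-order on $I$ and pass to the antichain $a_i=c_i\wedge\neg\bigvee_{j<i}c_j$; applying the mixing property to $\M$ in each coordinate $k$ separately yields $\tau_k\in M$ with $\Qp{\tau_k=\sigma_k^i}\geq a_i$ for all $i$, and the same transfinite induction used there upgrades this to $\Qp{\tau_k=\sigma_k^i}\geq c_i$ for all $i$ and $k$. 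Then for each $i$, $a_i\leq\Qp{\nphi(\sigma_1^i,\dots,\sigma_n^i)}\wedge\bigwedge_k\Qp{\tau_k=\sigma_k^i}\leq\Qp{\nphi(\tau_1,\dots,\tau_n)}$ by the fundamental substitution fact for boolean valued models, so $c=\bigvee_i a_i\leq\Qp{\nphi(\tau_1,\dots,\tau_n)}$; hence $\ap{[\tau_1]_{F_c},\dots,[\tau_n]_{F_c}}\in\G^\nphi_\M(c)$ and it restricts to $f_i$ on $c_i$ since $\Qp{\tau_k=\sigma_k^i}\geq c_i$. Uniqueness is immediate: two collations $\bar\tau,\bar\tau'$ satisfy $\Qp{\tau_k=\tau_k'}\geq c_i$ for every $i$, hence $\geq c$.

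For the statements about $E^\nphi_\M$: since $\M$ has the mixing property it is full (Proposition \ref{ful:mod}), so $A_\nphi=N_{b_\nphi}$ by Theorem \ref{chr:ful} (thus $p_\nphi$ is surjective), and moreover $b_\nphi=\bigvee_{j=1}^m\Qp{\nphi(\bar\rho^j)}$ for finitely many tuples $\bar\rho^j$. Given a section $s:U\to E^\nphi_\M$ of $p_\nphi$ over an open $U\subseteq N_{b_\nphi}$, the \'etal\'e property of $p_\nphi$ lets one cover $U$ by clopens $N_{c_i}$ on each of which $s$ coincides with $(\dot\sigma_1^i\times\dots\times\dot\sigma_n^i)\restriction N_{c_i}$; this is a compatible family in $\G^\nphi_\M$ over $\bp{c_i}$, which by the sheaf property just proved has a collation $\bar\tau$ over $d:=\bigvee_i c_i$ (so $N_d=\Reg{U}$, as $\St(\bool{B})$ is extremally disconnected), and since $E^\nphi_\M$ is Hausdorff the induced section $(\dot\tau_1\times\dots\times\dot\tau_n)\restriction U$ equals $s$. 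Chopping $N_{b_\nphi}$ into the pairwise disjoint clopen pieces $N_d$, $N_{\Qp{\nphi(\bar\rho^1)}}\setminus N_d$, $N_{\Qp{\nphi(\bar\rho^2)}}\setminus(N_d\cup N_{\Qp{\nphi(\bar\rho^1)}})$, and so on, and pasting the corresponding induced sections (legitimate since the pieces are clopen) produces a global section of $p_\nphi$ extending $s$. Taking $U=N_{b_\nphi}$, the collation already lives over $b_\nphi$ itself, so the resulting global section is exactly $(\dot\tau_1\times\dots\times\dot\tau_n)\restriction N_{b_\nphi}$, i.e. it is induced by the $n$-tuple $(\tau_1,\dots,\tau_n)$.

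The main obstacle I anticipate is bookkeeping rather than conceptual: checking that the transfinite-induction upgrade from $a_i$ to $c_i$ goes through verbatim in the multi-variable setting, and — for the ``extend to global'' clause — being careful that fullness is genuinely used here (a local section of $p_\nphi$ only lives over $A_\nphi$, which may be a proper dense subset of $N_{b_\nphi}$ when $\M$ is merely full), so the clopen-chopping step really does need the finite subcover provided by mixing $\Rightarrow$ fullness.
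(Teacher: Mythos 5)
Your argument is correct and follows exactly the route the paper intends: the paper states this corollary without proof, presenting it as a reformulation of Proposition \ref{ful:mod} in sheaf language, and your writeup fills in the implicit details by replaying the mixing argument of Proposition \ref{chr:mix} coordinate by coordinate to get the sheaf property of $\G^\nphi_\M$, and then combining Proposition \ref{ful:mod} with Theorem \ref{chr:ful} (finite subcover, clopen chopping) for the extension and representability of sections of $E^\nphi_\M$. The only cosmetic remark is that the appeal to Hausdorffness of $E^\nphi_\M$ is unnecessary, since the collation agrees with the given section pointwise on each $N_{c_i}$ by construction.
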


\section{Sheafification of presheaves on complete Boolean algebras according to the sup Grothendieck topology}
\label{sec:sheafposet}

The aim of this section is to provide a nice topological description of the sheafification (and also of the separification) process according to the sup Grothendieck topology for presheaves on a complete boolean algebra. This topological description will be performed by explicitly exhibiting an \'etal\'e bundle on $\St(\bool{B})$ associated to a given presheaf on a complete boolean algebra $\bool{B}$, and by taking the family of continuous sections on it. Note that the usual sheafification for the sup topology on frames which are not the open sets of a topological space is given by the double application of the operator $\F\mapsto\F^+$ (see \cite[Chapter III]{MacLMoer}) and does not give an explicit and natural \'etal\'e bundle representation of such sheaves. We use peculiar features of complete boolean algebras $\bool{B}$ to show that the sought \'etal\'e bundle representation exists for sheaves on $\bool{B}$ for the sup topology (specifically on the topological side we use that $\St(\bool{B})$ is the Stone-Cech compactification of any of its dense subsets, on the lattice thoretic side we use that a downward closed subset $D$ of the preorder $\bool{B}^+$ has the same supremum of any of its maximal antichains $A\subseteq D$).



\begin{notation}
Given $\bool{B}$ a complete Boolean algebra, we denote with $\Presh(\bool{B})$ the category of presheaves $\F:\bool{B}^{\mathrm{op}}\to\dSet$ such that $\F(1_{\bool{B}})\neq \emptyset$ and natural transformations between them, and with $\Sh_\mathrm{sup}(\bool{B})$ the full subcategory of $\Presh(\bool{B})$ generated by the presheaves which are sheaves for the supremum Grothendieck topology on $\bool{B}$.
\end{notation}
For the rest of this section we will consider only presheaves on complete Boolean algebras and our focus will be on the notion of sheaf for the supremum Grothendieck topology. In some occasions we will compare this notion to the notion of topological sheaf, as in the following:

\begin{example}\label{ex:sheaf}
 Let 
 $(X,\tau)$ be compact Hausdorff extremally disconnected,  and 
 $\C(X)$ be the set of continuous complex valued functions on $X$.
 $\C(X)$ is  the set of global sections of the \emph{topological sheaf} $\F$ of continuous complex valued functions defined on an open subset of $X$. 
 Notice however that, even though $\F$ is a topological sheaf on the topological space $X$, when restricted to $\bool{B}=\RO(X)$ it is not anymore a sheaf for the dense Grothendieck topology.
 
To see this, let $\bp{U_n:n\in\mathbb{Z}}$ be a maximal antichain of $\RO(X)$ and let $f_n\in \F(U_n)$ be constant with value $n$. Then $\bp{f_n:n\in\mathbb{Z}}$ is a matching family of $\F\restriction \RO(X)$
which cannot be collated by any element of $\F(X)$. 

 
Consider now the set 
\[
\C^+(X):=\bp{f:X\to\beta(\mathbb{C}): f \textit{ is continuous and }f^{-1}[\beta(\mathbb{C})\setminus\mathbb{C}]\textit{ is nowhere dense}}.
\] 
It turns out that $\C^+(X)$ defines the global sections of the sheafification (according to the dense Grothendieck topology) of the presheaf $\C(X)\restriction\bool{B}$:
if $f=\bigcup_{n\in\mathbb{Z}}f_n$, then its unique continuous extension
$\beta(f):X\to\beta(\mathbb{C})$ maps any $G$ outside of $\bigcup_{n\in\mathbb{Z}} U_n$ to some point in $\beta(\mathbb{C})\setminus\mathbb{C}$ and it is the collation of the matching family $\bp{f_n:n\in\mathbb{Z}}$.

More generally, if a continuous $s:X\to \beta(\mathbb{C})$ takes values outside of $\mathbb{C}$ on a nowhere dense set, then we can find a family
$\{V_i: i\in I\}$ of (disjoint) regular open subsets of $X$ such that $\bigcup_{i\in I}V_i$ is open dense in $X$ and $s_i=s\restriction V_i\in\C(V_i)$ has range in $\mathbb{C}$. Then $\bigcup_{i\in I}s_i\in\C(\bigcup_{i\in I}V_i)$ and thus $s\restriction\bigcup_{i\in I}V_i=\bigcup_{i\in I}s_i$. Therefore  $s$ is the unique continuous extension $\beta(\bigcup_{i\in I}s_i):X=\beta(\bigcup_{i\in I}V_i)\to\beta(\mathbb{C})$, and thus it is the collation in $\C^+(X)$ of the matching family $\{s_i: i\in I\}$ for the presheaf $\F\restriction \RO(X)$.

In model theoretic terms $\C(X)$ (seen as a $\RO(X)$-valued model) does not have the mixing property (see Proposition \ref{chr:mix}), while $\C^+(X)$ does. \cite{VIAVAC15} gives a detailed analysis of the Boolean valued model structure of $\C(X)$ and $\C^+(X)$.
Other connections between functional analysis, sheaf theory, and set theory will be given in Example \ref{exm:sheafLR}.
\end{example}

We want to develop a machinery which includes this example as a special case and that gives a concrete presentation of the sheafification process of a given presheaf on a complete boolean algebra according to the sup Grothendieck topology by describing it in terms of local sections of a suitable etal\'e bundle (as is done in \cite[Section II.5]{MacLMoer} for the notion of topological sheaf).

We will outline the similarities between our approach to produce the sheafification of a presheaf according to the sup Grothendieck topology and the one of \cite[Chapter II]{MacLMoer}; for this reason we will adopt a terminology similar to that of \cite[Chapter II]{MacLMoer} in several steps, with the provision to attach the apex $1$ when referring to our construction, in case confusion may arise.

\subsection{The \'etal\'e bundle of a presheaf on a complete boolean algebra}

%
%

  \begin{definition}
  \label{def:etbund1}
Let $\bool{B}$ be a complete boolean algebra.
Given a
 presheaf $\F:\bool{B}^{\textrm{op}}\to\dSet$, we set:
\[
\Lambda^1_\F:=\coprod_{G\in\St(\bool{B})}\F_G
\]
where
$\F_G=\bp{[f]_{G}: f\in\F(b), b\in G}$ and
$[f]_{G}$ is the equivalence class induced by $f\equiv_{G} g$ if and only if $\bigvee\{d\in \bool{B}:f\restriction d=g\restriction d\}\in G$\footnote{Since $\bool{B}$ is complete, this is equivalent to require the existence of $b\in G$ such that $\{d\in \bool{B}^+:f\restriction d=g\restriction d\}$ is dense below $b$. The fact that $\equiv_G$ is an equivalence relation is an easy exercise.}
We define the (surjective) map 
\[
\begin{aligned}
\pi_\F:& \Lambda^1_\F\to \St(\bool{B})\\ 
&[f]_{G}\mapsto G
\end{aligned}.
\]
Each $f\in\F(b)$ determines a map
\[
\begin{aligned}
\dot{f}:&N_b\to\Lambda_\F\\
&G\mapsto[f]_{G}
\end{aligned},
\]
which is injective: if $F\neq G$, $[f]_{F}\neq[f]_{ G}$, since $\F_F,\F_G$ are disjoint.

$\Lambda^1_\F$ is topologized by the topology $\sigma_\F$ generated by the (semi)base\footnote{The topology generated by this semibase gives that this semibase is a base, see below.}  
\[
\mathcal{B}_\F:=\bp{\dot{f}[N_b]: f\in\F(b), \, b\in\bool{B}^+}.
\]
\end{definition}

\begin{remark}
We remark that if $(X,\tau)$ is compact, extremally disconnected, Hausdorff, 
and $\F:\RO(X,\tau)^{\mathrm{op}}\to\mathcal{C}$ is a presheaf, it makes sense to apply our definition to $\F$ and to apply the definition of \cite[Section II.5]{MacLMoer} to $\F^*$ (as defined in Notation \ref{not:extRO(X)toO(X)}). It can be checked that in this specific case $\Lambda^1_\F$ and $\Lambda_{\F^*}$ (as defined in \cite[Section II.5]{MacLMoer}) are exactly the same bundle: the points in $X$ bijectively corresponds to the points of $\St(\RO(X))$ via Stone duality. Furthermore one can check that --- modulo the identification of (the points of) $\Lambda^1_\F$ and $\Lambda_{\F^*}$ ---  the topology we endow $\Lambda^1_\F$ with is exactly the same topology \cite[Section II.5]{MacLMoer} endow $\Lambda_{\F^*}$ with.

In particular, consider a complete Boolean algebra $\bool{B}$ and a $\bool{B}$-valued model $\M$. Then the \'etal\'e space $\Lambda^1_{\F_\M}$ (obtained from the presheaf $\F_\M:\bool{B}^{\textrm{op}}\to\dSet$ defined in Section \ref{sec:pres}) coincides (up to homeomorphism) with the \'etal\'e space $E_\M=\Lambda_{\F^*_\M}$ analyzed in Section \ref{sec:pres} and obtained from the topological presheaf $\F^*_\M$ on (the open sets of) $\St(\bool{B})$ defined there.

\end{remark}


From now on 
all our assertions about separated presheaves or sheaves subsume that we are referring to these concepts as instantiated for the sup Grothendieck topology on a complete boolean algebra.

\begin{lemma}
Let $\bool{B}$ be a complete Boolean algebra.
For every $\F\in\Presh(\bool{B})$, $\mathcal{B}_\F$ is a base for a  topology $\sigma_\F$ on $\Lambda^1_\F$ which makes $(\pi_\F,\Lambda^1_\F,\sigma_\F)$ an \'etal\'e bundle over $\St(\bool{B})$.
\end{lemma}

\begin{proof}
We only show that $\mathcal{B}_\F$ is a base, since from this fact it is almost trivial to prove that $(\pi_\F,\Lambda_\F,\sigma_\F)$ is an \'etal\'e space.

To this end, let $f_1\in\F(b_1),\dots, f_n\in\F(b_n)$ be such that 
$\dot{f}_1[N_{b_1}]\cap\dots\cap\dot{f}_n[N_{b_n}]\neq\emptyset$.
This occurs only there is some
$G\in\St(\bool{B})$ with $b_1,\dots,b_n\in G$ and $[f_i]_{G}=[f_j]_{G}$ for all $i\neq j$. 
This gives that for every $i\neq j$ there is some $c_{ij}\in G$ such that
$D_{ij}:=\{q\leq c_{ij}: f_i\restriction q=f_j\restriction q\}$ is dense below $c_{ij}$.
Now, since $G$ is a filter, there is some $c\in G$ refining $c_{ij}\in G$ for all $i$ and $j$. Then $D:=\bigcap_{i\neq j}D_{ij}\cap\downarrow c$ is a dense subset of $\downarrow c$ such that, for every $d\in D$, $f_j\restriction d=f_i\restriction d$ for all $i,j$.
This gives that, for every $H\in N_c$, we have
\[
[f_1]_{H}=\dots=[f_n]_{H}.
\]
Thus, if we call $V$ the set $\dot{f}_1[N_c]=\dots=\dot{f}_n[N_c]$, then $V\subseteq\bigcap_{j=1}^n\dot{f}_j[N_ {b_j}]$
is an open neighborhood of $[f_1]_{G}=\dots=[f_n]_{G}$; therefore $\bigcap_{j=1}^n\dot{f}_j[N_ {b_j}]$ can be covered by basic open sets.
\end{proof}

\begin{theorem}
\label{fact:lambdahaus} \label{prop:lambdahaus}
Let $\bool{B}$ be a complete Boolean algebra and $\F\in\Presh(\bool{B})$.
The topology $\sigma_\F$ on $\Lambda^1_\F$ is Hausdorff,  locally compact, and extremally disconnected.
 \end{theorem}

 \begin{proof}
\emph{}

\begin{description}
\item[$\Lambda^1_\F$ has the Hausdorff property]
The only non trivial case occurs when the two points we need to separate $[f]_{G}\neq[g]_{H}$ are on the same stalk; we need to show that these two points can be separated by basic open sets. This case occurs when $G=H$ and $f\in\F(b_f)$, $g\in\F(b_g)$ with $N_{b_f},N_{b_g}\in G$. Let
 \[
 N_b:=\bigvee_{\bool{B}}\{N_q: f\restriction q=g\restriction q\}.
 \]
 We observe that $N_b\notin G$: otherwise \[N_c=N_b\cap N_{b_f}\cap N_{b_g}=\bigvee_{\bool{B}}\{N_q: f\restriction q=g\restriction q, q\leq p_f, p_g\}
 \]
 would be in $G$. This would give that $\bp{q\leq N_c: f\restriction q=g\restriction q}$ is dense below $N_c$, yielding $[f]_{G}=[g]_{G}$, a contradiction.
 Now observe that if $H\in N_{\neg b}$, $[f]_{H}\neq [g]_{H}$ as 
 $ f\restriction q=g\restriction q$ holds for no $q\leq \neg b$. We conclude that $\dot{f}[N_{\neg b}]$ and $\dot{g}[N_{\neg b}]$ separate $\dot{f}(G)$ from $\dot{g}(G)$. 
%
%
%
%
%

\item[$\Lambda^1_\F$ is locally compact] 
We note that for $s:N_b\to\Lambda_\F$ local section with open range, $s[N_b]$ is a compact and open subspace of $\Lambda_\F$, being it homeomorphic to the compact Hausdorff space $N_b$.
Being $\Lambda_\F$ Hausdorff, its compact subsets are closed, hence the topology of $\Lambda_\F$ has a base of clopen compact sets and is therefore $0$-dimensional and locally compact. 

\item[$\Lambda^1_\F$ is extremally disconnected] It suffices to prove that the closure of an open set is open.

It is convenient to adopt this piece of notation for $U\subseteq X\subseteq\Lambda_\F$: $\mathrm{Cl}_X(U)$ denotes the closure of $U$ as a subset of $X$ relative to the subspace topology inherited by $X$ from $\Lambda_\F$. Note that $\mathrm{Cl}_X(U)=\mathrm{Cl}_{\Lambda_\F}(U)\cap X$ and that  $\mathrm{Cl}_X(U)=\mathrm{Cl}_{\Lambda_\F}(U)$ when $X$ is a closed subset of $\Lambda_\F$.

Assume $A$ is open for $\Lambda_\F$ and $[f]_G\in\mathrm{Cl}_{\Lambda_\F}(A)$, we show that $[f]_G$ admits an open neighborhood totally contained in $\mathrm{Cl}_{\Lambda_\F}(A)$. 

This will give that $\mathrm{Cl}_{\Lambda_\F}(A)$ is open as well. 

Towards this aim note that for some $b\in G$ we have that $[f]_G\in \dot{f}[N_b]$ and that $\dot{f}$ implements an homeomorphism of $N_b$ with $\dot{f}[N_b]$ which is a clopen subspace of $\Lambda_\F$.

We therefore get that 
\[
Y=\mathrm{Cl}_{\dot{f}[N_b]}(A\cap \dot{f}[N_b])=\mathrm{Cl}_{\Lambda_\F}(A\cap \dot{f}[N_b])
\]
and that $[f]_G\in Y$.

Since $\dot{f}[N_b]$ with subspace topology is homeomorphic to $N_b$ -- which is extremally disconnected -- and $A\cap \dot{f}[N_b]$ is open in this subspace, we get that $Y$ is an open subset of $\dot{f}[N_b]$ in the subspace topology. Since $\dot{f}[N_b]$ is open in $\Lambda_\F$, we get that $Y$ is an open subset of $\Lambda_\F$. Clearly $Y$ is totally contained in $\mathrm{Cl}_{\Lambda_\F}(A)$. 
The proof is completed.
\end{description}
\end{proof}

It is convenient for future uses to characterize the \'etal\'e bundles on $\St(\bool{B})$ which have the above topological properties.

\begin{definition}\label{def:strsepetsp}
Let $(X,\tau)$ be a 
topological space and $(\pi,E,\sigma)$ be an \'etal\'e  bundle on $(X,\tau)$.

$(\pi,E,\sigma)$ is \emph{strongly separated} over $(X,\tau)$ if
for every two local sections $s,t:U\to E$ defined on some open set $U\in \tau$
\[
E_{s,t}=\bp{x\in U:s(u)=t(u)}
\] 
is closed in $U$ for its  subspace topology inherited from $\tau$.
\end{definition}

This fact is almost self-evident, but is -- in any case -- a corollary of the next Lemma:
\begin{fact}
Assume $\bool{B}$ is a complete boolean algebra and $\F$ is a presheaf on $\bool{B}$.
Then $(\pi_\F,\Lambda^1_\F,\sigma_\F)$ is a strongly separated \'etal\'e bundle over 
$(\St(\bool{B}),\tau_\bool{B})$.
\end{fact}

\begin{lemma}\label{lem:strsephauloccompextrdisc}
Let $(X,\tau)$ be an Hausdorff topological space and $(\pi,E,\sigma)$ be an \'etal\'e bundle over $(X,\tau)$.

The following are equivalent:
\begin{enumerate}
\item  \label{lem:strsephauloccompextrdisc-1}
$(\pi,E,\sigma)$ is a strongly separated \'etal\'e bundle;
\item \label{lem:strsephauloccompextrdisc-2}
$(E,\sigma)$ is Hausdorff.
\end{enumerate}
If any of the above equivalent conditions is met by $(\pi,E,\sigma)$, and $(X,\tau)$ is also compact, then $(E,\sigma)$ is locally compact, and if $(X,\tau)$ is also  extremally disconnected, then so is $(E,\sigma)$.
%
\end{lemma}

Note that (according to the notation introduced in Def. \ref{def:strsepetsp}) the very essence of an \'etal\'e space ensures that $E_{s,t}=\bp{x\in U:s(u)=t(u)}$ is always open; hence the Hausdorff property for an \'etal\'e space over an Hausdorff space is strictly related to topological disconnection.

\begin{proof}
\emph{}

\begin{description}
\item[\ref{lem:strsephauloccompextrdisc-1} implies \ref{lem:strsephauloccompextrdisc-2}]
Points that lies on different $\pi$-fibers over $X$ can be separated by the preimages under $\pi$ of disjoint open neighborhoods of their $\pi$-images (which can be found since $(X,\tau)$ is Hausdorff). Now we show that we can also separate points on the same $\pi$-fiber.
Let  $s,t:U\to E$ be continuous section with $s(x)\neq t(x)$ for some $x\in U$ clopen subset of $X$.  Since $(\pi,E,\sigma)$ is strongly separated the set $V$ of $y\in U$ such that
$s(y)\neq t(y)$ is open in $U$ and $x$ is is in this set. Since $(\pi,E,\sigma)$ is an \'etal\'e bundle over $(X,\tau)$, $s,t$ are both local homeomorphisms with their images and are open maps. Hence we get that $s[V]\cap t[V]$ is empty and that both are open subsets of $(E,\sigma)$. These two sets show that $s(x),t(x)$ can be separated by disjoint open neighboorhoods.

\item[\ref{lem:strsephauloccompextrdisc-2} implies \ref{lem:strsephauloccompextrdisc-1}]
 \ref{lem:strsephauloccompextrdisc-2} ensures that for local sections $s,t:U\to E$ defined on some open set $U$
\[
E_{s,t}=\bp{y\in U: s(y)=t(y)}
\]
is closed in the subspace topology inherited by $U$, since $U,E$ are Hausdorff and $s,t$ are continuous. 
The conclusion follows.
\end{description}

The proof that when $(\pi,E,\sigma)$ is strongly separated and $(X,\tau)$ is also  extremally disconnected (respectively compact), then $(E,\sigma)$ is also extremally disconnected  (respectively locally compact) is left to the reader and proceeds along the same lines of the one given for $\Lambda^1_\F$.
\end{proof}

\subsection{Separafication of presheaves on complete boolean algebras in terms of  local sections}

We use $\Lambda^1_\F$ to produce the separification of a presheaf $\F$.
\begin{definition}
\label{def:functxi}
Let $\bool{B}$ be a complete Boolean algebra. 
 $\Gamma^{0,\bool{B}}$ is the functor $\bundle(\bool{B})\to\Presh(\bool{B})$ defined in the following way: 
\begin{itemize}
\item
On objects: it sends a bundle $(\pi,E,\sigma)$ to the presheaf $\Gamma^0_{(\pi,E,\sigma)}:\bool{B}^\mathrm{op}\to\dSet$ such that 
\[\Gamma^0_{(\pi,E,\sigma)}(b)=\{s:N_b\to E: s\text{ is a local section for }(\pi,E,\sigma)\}\] 
with $\Gamma^0_{(\pi,E,\sigma)}(c\leq b)$ the obvious restriction map $s\mapsto s\restriction N_c$.
\item
On morphisms: given a morphism $\psi:E_1\to E_2$, $\Gamma^0(\psi)$ maps $f\in\Gamma^0_{(\pi_1,E_1,\sigma_1)}(b)$ to $\psi\circ f\in\Gamma^0_{(\pi_2,E_2,\sigma_2)}$.
\end{itemize}
\end{definition}\label{def:nattrxi}
We leave to the reader to check that $\Gamma^{0,\bool{B}}$ is well defined and functorial.
\begin{fact}
\label{fact:locsec}
If $\F$ is a presheaf on $\bool{B}$, for every local section $s:N_b\to \Lambda^1_\F$ we can find $D\subseteq \bool{B}$ such that $\bigvee D=b$ and, for every $d\in D$, $s\restriction N_d=\dot{f}$ for some $f\in\F(d)$.
\end{fact}
\begin{proof}
It is enough to notice that, if $G\in N_b$, then a basic open neighborhood of $s(G)$ is of the form $\dot{f}[N_c]$ for some $G\ni N_c\subseteq N_b$. Indeed, by continuity of $s$, there exists $N_d\subseteq N_c$ such that $s[N_d]\subseteq \dot{f}[N_c]$, proving that $s(G)=[f]_G=\dot{f}(G)$ for every $G\in N_d$.
\end{proof}
\begin{definition}\label{rem:isoJ-sep}
Let $\xi^{\F}:\F\to\Xi_\F:=\Gamma^0_{(\pi_\F,\Lambda^1_\F,\sigma_\F)}$ be defined by $\xi^{\F}_b(f)=\dot{f}$ for every $f\in\F(b)$, $b\in \bool{B}$.
\end{definition}
We leave to the reader to check that $\xi^{\F}:\F\to\Xi_\F$ is a natural transformation  acting in the obvious way.

$\bp{\xi^\F}_{\F\in\Presh(\bool{B})}$ separates presheaves, but in general does not sheafify them.
\begin{proposition}\label{prop:univJsep}
For every $\F\in\Presh(\bool{B})$, the presheaf $\Xi_\F$ is separated. Moreover, for every separated presheaf $\G\in\Presh(\bool{B})$, every natural transformation $\delta=\{\delta_b\}_{b\in \bool{B}}:\F\to\G$ factors through $\xi^{\F}$. In particular $\xi^\F$ is an isomorphism if and only if $\F$ is separated.

\end{proposition}
\begin{proof}
To see that $\Xi_\F$ is separated, notice that if $\bigvee D=b$ below $b$ and $f, g\in\F(b)$ are both collations over $b$ of a matching family $\{f_d\in\F(d): d\in D\}$, then for every $G\in N_b$ we have that $[f]_{G}=[g]_{G}$. Hence $\dot{f}$ and $\dot{g}$ are the same object, which is the collation of the matching family $\{\dot{f_d}\in\Xi_\F(d): d\in D\}$.

To prove the universal property, let $\delta=\{\delta_b\}_{b\in \bool{B}}:\F\to\G$ with $\G$ separated. We define $\mu=\{\mu_b\}_{b\in\bool{B}}:\Xi_\F\to\G$ by letting $\mu_b(\dot{f}):=\delta_b(f)$. Notice that it is well defined: indeed, if $f, g\in\F(b)$ coincide on some dense $D\subseteq\downarrow b$ (and thus $\dot{f}=\dot{g}$), then $\delta_b(f)=\delta_b(g)$, since both are the collation of the family $\{\delta_d(f\restriction d): d\in D\}=\{\delta_d(g\restriction d): d\in D\}$ and $\G$ is separated. 

The fact that $\delta_b=\mu_b\circ\xi^{\F}_b$ for every $b\in\bool{B}$ is then straightforward.
\end{proof}
In other words, $\F\mapsto \Xi_\F$ (provided we explain how it acts on the arrows of $\Presh(\bool{B})$, which we will do at a later stage) is up to isomorphism the \emph{separafication functor} for the sup Grothendieck topology on $\bool{B}$ (see for instance \cite[page 43]{VistNotes}). In particular the operator $\F\mapsto\Xi_\F$ differs from the operator $\F\mapsto\F^+$  defined in \cite[Section III.5]{MacLMoer}. Indeed there are cases in which $\F^+$ is a sheaf and $\F$ is not, while $\Xi_\F$ is a sheaf if and only if $\F$ is. The universality properties of the operator $\Xi$ grants that the operator $\F\mapsto\F^+$ factors through $\{\xi^\F\}_{\F\in\Presh(\bool{B})}$.

\subsection{Sheafification of presheaves on complete boolean algebras in terms of Stone sections}

We now develop  the sheafification operator for the sup topology on complete boolean algebras via a presentation of it factoring through \'etal\'e bundles on a compact extremally disconnected Hausdorff space.

The following is essential:

\begin{proposition}
\label{prop:betaEetalebundle}
Assume $(\pi,E,\sigma)$ is a bundle on a compact Hausdorff space $(X,\tau)$. Then:
\begin{enumerate}
\item $(\beta(\pi),\beta(E),\beta(\sigma))$ is still a bundle on $(X,\tau)$;
\item if $(E,\sigma)$ is Tychonoff, $(\pi,E,\sigma)$ is obtained from the restriction to $E$ of $\beta(\pi)$;
\item if $(X, \tau)$ is extremally disconnected and $(\pi, E, \sigma)$ is a Tychonoff strongly separated \'etal\'e bundle, then $(\beta(\pi), \beta(E), \beta(\sigma))$ is \'etal\'e.
\end{enumerate}
\end{proposition}
\begin{proof}
The first two item are almost self-evident. For the third one, remember that, being $(\pi, E, \sigma)$ a strongly separated \'etal\'e bundle and $(X, \tau)$ extremally disconnected, by Lemma \ref{lem:strsephauloccompextrdisc} $(E, \sigma)$ is extremally disconnected. Thus by Corollary \ref{corol:betazextrdisc} $(\beta(\pi), \beta(E), \beta(\sigma))$ is extremally disconnected and by Fact \ref{fac:StCe-iso} its regular open sets are of the form $\beta(U)$ for $U\in\RO(E, \sigma)$. Hence, we have that a base for $\beta(\sigma)$ is
\[
\begin{aligned}
\RO(\beta(E), \beta(\sigma))&=\{\beta(U): U\in\RO(E, \sigma)\}\\
&=\{\beta[s[V]]: s\textit{ local section of $(\pi, E, \sigma)$}, V\in\RO(X,\tau)\}\\
&=\{\beta(s)[V]: s\textit{ local section of $(\pi, E, \sigma)$}, V\in\RO(X,\tau)\}
\end{aligned}
\]
proving that $(\beta(\pi), \beta(E), \beta(\sigma))$ is an \'etal\'e bundle over $(X, \tau)$.
\end{proof}

The following is the cornerstone on which our construction of the sheafification operator relies:

 \begin{proposition}\label{prop:existgluingHaus}
 Let $\bool{B}$ be a complete Boolean algebra, and 
 $(\pi,E,\sigma)$ be a bundle over $(\St(\bool{B}),\tau_\bool{B})$ with $(E,\sigma)$ Tychonoff.  

Then every matching family 
 $\bp{s_U:  U\in D}$ of open sections $s_U:U\to E$ indexed by some $Z\subseteq \tau_\bool{B}$ has exactly one gluing 
 $s:\Reg{\bigcup Z}\to \beta(E)$ which is continuous.
 \end{proposition}
 \begin{proof}
Let $s_0=\bigcup\bp{s_U:U\in D}$ and $V=\bigcup Z$ be the dense open subset  of $\Reg{V}=\Cl{V}$ on which $s_0$ is defined. Note that $\Reg{V}$ is also extremally disconnected compact Hausdorff (with its inherited subspace topology).
Since $E\subseteq\beta(E)$ and the latter is compact Hausdorff, by Proposition \ref{comp:exdisc} applied to $\Reg{V}$, $s_0$ can be uniquely extended to a continuous $s:\Reg{V}\to\beta(E)$.
\end{proof}

This leads naturally to the following definition:

\begin{definition}
Let $(X,\tau)$ be a compact extremally disconnected Hausdorff space.

Given a bundle $(\pi,E,\sigma)$ over $(X,\tau)$ with $(E,\sigma)$ Tychonoff, 
a \emph{Stone section for $E$} on some $U\in\RO(\tau)$ is a continuous map $s:U\to \beta(E)$ such that:
\begin{itemize}
\item $s$ is a continuous section for $\beta(\pi)$,
\item for some dense open $A\subseteq U$ we have
 $s\restriction A$ is a section for $\pi$.
\end{itemize}
We say that $(\pi,E,\sigma)$ is \emph{complete for gluings} if, for every $U\in\RO(\tau)$, every Stone section 
\(
s:U\to \beta(E)
\) takes all its values in $E$.
\end{definition}

We can now define the sheaf we want to associate to $\Lambda^1_\F$. 

\begin{definition}
\label{def:gammapi}
 Let $\bool{B}$ be a complete Boolean algebra and
$(\pi,E,\sigma)$ be a bundle on $\St(\bool{B})$ with $(E,\sigma)$ a Tychonoff space.

Define $\Gamma^1_{(\pi,E,\sigma)}:\bool{B}^\mathrm{op}\to\dSet$ in the following way:
\begin{description}
\item[On objects] For $b\in\bool{B}$
\[
\Gamma^1_{(\pi,E,\sigma)}(b):=\{s:N_b\to \beta(E): s \textit{ is a Stone section for $E$}\};
\]
\item[On morphisms]
for $c\leq b$, $\Gamma^1_{(\pi,E,\sigma)}(c\leq b):s\mapsto s\restriction N_c$.

\end{description}
\end{definition}

\begin{remark}\label{rmrk:xitogamma}
Let $\bool{B}$ be a complete Boolean algebra and let $(\pi,E,\sigma)$ be an Hausdorff \'etal\'e bundle on $\St(\bool{B})$. Clearly the presheaf $\Gamma^0_{(\pi,E,\sigma)}$ of Definition \ref{def:functxi} is a subpresheaf of $\Gamma^1_{(\pi, E, \sigma)}$. Hence, the natural transformation $\xi^\F$ of Definition \ref{rem:isoJ-sep} can be regarded as a natural transformation $\F\to\Gamma^1_{(\pi_\F, \Lambda^1_\F,\sigma_\F)}$.
\end{remark}
 \begin{theorem}\label{mainthm:sheaf}
Let $\bool{B}$ be a complete Boolean algebra 
and $(\pi, E, \sigma)$ be a bundle on $\St(\bool{B})$ with $E$ a Tychonoff space. Then:
\begin{enumerate}
\item $\Gamma^1_{(\pi, E,\sigma)}$ is a sheaf for the supremum Grothendieck topology on $\bool{B}$;
\item if $(\pi, E, \sigma)$ is \'etal\'e then $\Gamma^1_{(\pi, E,\sigma)}$ is determined by its global sections;
\item if $\F\in\Presh(\bool{B})$, every natural transformation $\Theta:\F\to\G$ with $\G$ a sheaf for the supremum Grothendieck topology on $\bool{B}$ factors through\footnote{$\xi^\F$ has been defined in Definition \ref{rem:isoJ-sep}.}
 \[
\xi^\F:\F\mapsto\Gamma^1_{(\pi_\F,\Lambda^1_\F, \sigma_\F)};
\]
\item $\F$ is a sheaf for the sup Grothendieck topology if and only if $\xi^\F:\F\mapsto\Gamma^1_{(\pi_\F,\Lambda^1_\F, \sigma_\F)}$ is an isomorphism.
\end{enumerate}
 \end{theorem}
 \begin{proof}
Let $(\pi, E, \sigma)$ be a bundle over $\St(\bool{B})$ with $(E,\sigma)$ Tychonoff.
\begin{description}
\item[$\Gamma^1_{(\pi, E, \sigma)}$ is separated]
Let $s,t$ be Stone sections in $\Gamma_{(\pi, E, \sigma)}(b)$ which are both gluing of 
some matching family
$\bp{\dot{s}_d:d\in D}$ for some dense $D\subseteq\downarrow b$, so that $s_d\in\F(d)$ for all $d\in D$.
By Proposition \ref{prop:existgluingHaus} $s,t$ coincide.

\item[$\Gamma^1_{(\pi, E, \sigma)}$ is a sheaf]
Let $\bp{s_d:d\in D}$ be a matching family in 
\(\Gamma_{(\pi, E, \sigma)}\)
indexed by some dense $D\subseteq\downarrow b$. 
Then for every $d\in D$ $s_d$ is a Stone section, hence there is some dense $K_d\subseteq\downarrow d$ such that $s_d\restriction N_r=\dot{s}_{r,d}$ for $r\in K_d$ and some $s_{r,d}\in \F(r)$.
This gives that $K=\bigcup_{d\in D}K_d$ is a dense subset of $\downarrow b$. Furthermore for any $r\in K_d$ and $s \in K_e$ and $t$ refining $r,s$, $t$ is in $K_d\cap K_e$, we have that 
\[
\dot{s}_{r,d}\restriction N_{t}=s_d\restriction N_{t}=s_e\restriction N_{t}=\dot{s}_{s,e}\restriction N_{t};
\]
hence the family 
$\bp{\dot{s}_{r,d}:d\in D,\,r\in K_d}$ 
is matching. 
Thus by Proposition \ref{prop:existgluingHaus} it has a unique gluing, which is therefore a Stone section and belongs to $\Gamma^1_{(\pi, E, \sigma)}(b)$. 
\item[If $(\pi, E, \sigma)$ is \'etal\'e $\Gamma^1_{(\pi, E, \sigma)}$ is determined by its global sections]
First of all, for every $b\in\bool{B}$ $\Gamma^1_{(\pi, E, \sigma)}(b)$ is nonempty. Indeed, let $G\in N_b$ and let $e\in\pi^{-1}[\{G\}]$. Being $(\pi, E, \sigma)$ \'etal\'e, we can find $N_c\subseteq N_b$ and a section $s:N_c\to E$ such that $s[N_c]$ is an open neighborhood of $e$. In this way we obtain a family $C=\{c_i: i\in I\}$ with $\bigvee C=b$ and such that $\Gamma^0_{(\pi, E, \sigma)}(c_i)$ is nonempty. Let $s_i$ be an element in $\Gamma^0_{(\pi, E, \sigma)}(c_i)$. Up to a refinement to a maximal antichain $A\subseteq\downarrow C$, we can assume $c_i\wedge c_j=0$ for every $i\neq j$ in $I$. But then $\beta(\bigcup_{i\in I}s_i):N_b\to\beta(E)$ is an element in $\Gamma^1_{(\pi, E, \sigma)}(b)$. 

We can now prove that $\Gamma^1_{(\pi, E, \sigma)}$ is determined by its global sections: let $s\in\Gamma^1_{(\pi, E, \sigma)}(b)$. Take some $t\in\Gamma^1_{(\pi, E, \sigma)}(\neg b)$. Then $s\cup t\in\Gamma^1_{(\pi, E, \sigma)}(1_{\bool{B}})$ and $s=(s\cup t)\restriction b$.
\end{description}
Take now $\F, \G\in\Presh(\bool{B})$ with $\G$ a sheaf, and let $\delta:\F\to\G$ be a morphism. We have to prove that $\delta$ factors through $\xi^\F$.
 Let $s:N_b\to \Lambda_\F$ be a Stone section as witnessed by
$\bp{\dot{s}_d:d\in D}$ for some dense $D\subseteq\downarrow b$ with $s_d\in \F(d)$ for all $d\in D$. Set $\Upsilon_b(s)$ to be the unique element in $\G(p)$ which glues
the matching family $\bp{\delta_d(s_d):d\in D}$. One can check that
$\delta_b(f)=\Upsilon_b(\dot{f})$ for all $b\in\bool{B}$ and $f\in \F(b)$. Furthermore the required diagrams commute.

Now, if $\xi^\F$ is an isomorphism, then $\F$ is clearly a sheaf. Conversely, if $\F$ is a sheaf, each collation of families $\{\dot{f}_d: d\in D\}$ for some dense $D\subseteq\downarrow b$ is of the form $\dot{g}$ for some $g\in\F(b)$, and from this fact the conclusion of the proof is an easy exercise.
\end{proof}

We can obtain the following characterizations of which \'etal\'e bundles describe a sheaf for the sup Grothendieck topology.
\begin{lemma}\label{lem:charbundleissheaf}
Let $\bool{B}$ be a complete boolean algebra. If $\F$ is a sheaf for the sup Grothendieck topology on $\bool{B}$ then $\Lambda^1_\F$ is complete for gluings. Also, whenever $\F$ is separated and determined by its global sections, the converse holds.
\end{lemma}
\begin{proof}
Assume $\F$ is a sheaf and let $s:N_b\to \beta(\Lambda^1_\F)$ be a Stone section. By Fact \ref{fact:locsec} we can find $D\subseteq \bool{B}$ such that $\bigvee D=b$ and for every $d\in D$ there exists $f_d\in\F(d)$ such that $s\restriction N_d=\dot{f_d}$. Take a gluing $g\in\F(b)$ of $\{f_d: d\in D\}$. Then $s$ and $\dot{g}$ coincide on a dense subset of $N_b$ and thus they must be equal. To conclude it is enough to notice that the image of $\dot{g}$ is contained in $\Lambda^1_\F$.

Assume now $\F$ is separated and determined by its global sections. If $\Lambda^1_\F$ is complete for gluings, then $\Gamma^1_{\Lambda^1_\F}=\Gamma^0_{\Lambda^1_\F}$. Since $\F$ is separated, $\Gamma^0_{\Lambda^1_\F}$ is isomorphic to $\F$, and thus $\F$ is a sheaf.
\end{proof}

By Corollary \ref{char:Gamma0=Gamma1} we also have the following.
\begin{corollary}
 Let $\bool{B}$ be a complete Boolean algebra and $\M$ be a $\bool{B}$-valued model.
Then $\M$ has the mixing property if and only if the presheaf $\F_\M$ is isomorphic to the sheaf $\Gamma^1_{\Lambda^1_{\F_\M}}$.
\end{corollary}

\begin{example}\label{ex:fullnotimplmix2}
In Example \ref{non:mix} we are just saying that the continuous extension to the whole $\St(\bool{B})$ of $h:\bigcup_{a\in A}N_a\to \Lambda_{L(M^{\bool{B}})}$ such that \[h(G)=[\check{a}]_G\textit{ if and only if }a\in G\]
is a map $\St(\bool{B})\to\beta(\Lambda_{\F_\M})$ which is not induced by any element of $M^{\bool{B}}$.
\end{example}

\subsection{Mapping the category of presheaves on $\bool{B}$ onto the category of bundles on $\St(\bool{B})$}\label{subsec:adjunctions}

Up to now we have transformed presheaves into bundles and bundles into presheaves, but we have not paid attention to transform arrows between presheaves into arrows between bundle. Now we address this task  (note that the converse direction from bundles to preheaves has been covered in Def. \ref{def:functxi}).

\begin{definition}
Given  a complete Boolean algebra $\bool{B}$:

\begin{itemize}
\item
$\bundle(\bool{B})$ denotes the category whose objects are the bundles $(\pi,E,\sigma)$ over $\St(\bool{B},\tau_{\bool{B}})$ with $(E,\sigma)$ a Tychonoff space, and whose morphisms between $(\pi_1, E_1, \sigma_1)$ and $(\pi_2, E_2, \sigma_2)$ are the continuous maps $\psi:E_1\to E_2$ such that $\pi_1=\pi_2\circ\psi$.
\item 
$\bundleopen(\bool{B})$ denotes the subcategory of $\bundle(\bool{B})$ with same objects but whose morphisms include only the morphisms of $\bundle(\bool{B})$ which are \emph{open} maps.
\item
$\etale(\bool{B})$ is the full subcategory of $\bundle(\bool{B})$ whose objects are \'etal\'e spaces.
%
\item
$\exetale(\bool{B})$ is the full subcategory of $\bundle(\bool{B})$ whose objects are the strongly separated \'etal\'e spaces. 
\end{itemize}
\end{definition}


We need first to make the following observation (which should not come as a surprise in view of the proof of Lemma \ref{lem:JBundef} below):

\begin{fact}
Assume $(\pi_1,E_1,\sigma_1)$ and $(\pi_2,E_2,\sigma_2)$ are \'etal\'e bundles and 
$\psi:E_1\to E_2$ is a morphism between them. Then $\psi$ is an open map.

In particular $\exetale(\bool{B})$ is automatically a full subcategory of $\bundleopen(\bool{B})$.
\end{fact}
\begin{proof}
Since we are dealing with \'etal\'e bundles the images of continuous sections $s:U\to E_i$ defined on an open set $U$ of $\St(\bool{B})$ form a base for $\sigma_i$.
Therefore it suffices to check that $\psi\circ s$ is a local section of $E_2$, whenever $s$ is a local section of $E_1$ defined on an open set. But this follows, since $\pi_2\circ\psi=\pi_1$.
\end{proof}

\begin{definition}\label{def:LambdaB}
Let $\bool{B}$ be a complete Boolean algebra.
 The functor 
\[
\Lambda^{\bool{B}}:\Presh(\bool{B})\to\bundle(\bool{B})
\]
is defined as follows: 
\begin{description}
\item[On objects] It sends a presheaf $\F$ to the bundle $(\pi_\F, \Lambda^1_\F, \sigma_\F)$. 
\item[On morphisms] Let $\delta=\{\delta_b\}_{b\in\bool{B}}:\F_1\to\F_2$ be a natural transformation of presheaves on $\bool{B}$. Its image $\Lambda^{\bool{B}}(\delta):\Lambda^1_{\F_1}\to\Lambda^1_{\F_2}$ is such that, for every $G\in\St(\bool{B})$ and $f\in\F(b)$ with $b\in G$, $\Lambda^\bool{B}\delta([f]_{G}):=[\delta_b(f)]_{G}$.
\end{description}
\end{definition}

\begin{lemma}\label{lem:JBundef}
Let $\bool{B}$ be a complete Boolean algebra.
$\Lambda^{\bool{B}}:\Presh(\bool{B})\to\bundle(\bool{B})$ is a well defined functor whose image is contained in $\exetale(\bool{B})$.
\end{lemma}

\begin{proof}
The fact that $\pi_\F:\Lambda^1_\F\to\St(\bool{B})$  is an \'etal\'e bundle  which is locally compact Hausdorff and extremally disconnected has already been established. 
To complete the proof
we have to show that, if $\delta:\F\to\G$ is a morphism, then $\Lambda^{\bool{B}}(\delta)$ is continuous (and also open).

It is easily seen to be open, as for any $f\in\F(b)$, $\Lambda^{\bool{B}}(\delta)[\dot{f}[N_b]]$ is
-- by definition -- $\dot{\delta_b(f)}[N_b]$, which is open in $\Lambda_\G$.

To see that $\Lambda^{\bool{B}}(\delta)$ is continuous, first observe that $[g]_{H}\in\Lambda^{\bool{B}}(\delta)[\Lambda_\F]$ 
if and only if for some $b\in H$ and for some $f\in \F(b)$ we have
$[g]_{H}=[\delta_b(f)]_{H}$; w.l.o.g. we may further refine $b$ still in $H$ and assume $g,\delta_b(f)$ are both in $\G(b)$ and $D=\bp{d\leq b: g\restriction d=\delta_b(f)\restriction d}$ is dense below $b$. 

Now to show continuity, 
take $[g]_{H}\in\Lambda^{\bool{B}}(\delta)[\Lambda_\F]$ and $\dot{g}[N_c]$ basic open neighborhood of $[g]_H$ given by some some $c\in H$.
By the above remark, we can find $b\in H$ refining $c$, and $f\in \F(b)$, so that 
\[
D=\bp{d\leq b: g\restriction d=\delta_b(f)\restriction d}
\] 
is dense below $b$. Then for any $d\in D$
\[
\Lambda^{\bool{B}}(\delta)[\dot{f}[N_d]]=\dot{g}[N_d]\subseteq\dot{g}[N_c],
\]
as was to be shown.
\end{proof}


\subsection{The separification operator in terms of  strongly separated \'etal\'e bundles}

\begin{proposition}\label{lem:Jsepbindles1}
Let $\bool{B}$ be a complete Boolean algebra. 

Then: 
\begin{enumerate}
\item  \label{lem:Jsepbindles1-2}
there exists a natural transformation 
$\Lambda^{\bool{B}}\circ\Gamma^{0,\bool{B}}\to\Id_{\bundle(\bool{B})}$ which becomes an isomorphism when restricted to $\etale(\bool{B})$;
\item \label{lem:Jsepbindles1-3}
$\Gamma^{0,\bool{B}}$ is right adjoint to $\Lambda^{\bool{B}}:\Presh(\bool{B})\to\bundle(\bool{B})$ and
\[\Xi^{\bool{B}}:=\{\xi^\F\}_{\F\in\Presh(\bool{B})}:\Id_{\Presh(\bool{B})}\to\Gamma^{0,\bool{B}}\circ \Lambda^{\bool{B}}\] is the unit of the adjunction;
\item \label{lem:Jsepbindles1-4}
$\Gamma^{0,\bool{B}}\circ \Lambda^{\bool{B}}$ is the separification operator for the Sup Grothendieck topology on $\bool{B}$. 
\end{enumerate}
\end{proposition}
\begin{proof}
\emph{}

\begin{description}
\item[\ref{lem:Jsepbindles1-2}]
We define the natural transformation of functors by sending, for $b\in H$ and $s\in\Lambda_{(\pi, E, \sigma)}(b)$, the equivalence class $[s]_{ H}$ (which is a point of $\Lambda^{\bool{B}}\circ\Gamma^{0,\bool{B}}(\pi,E,\sigma)$) to
$s(H)\in E$. Note that $[s]_{H}=[t]_{G}$ if and only if $G=H$ and for some $b\in G$
\[
\bigvee\bp{d\leq b: s\restriction N_d=t\restriction N_d}=b
\]
if and only if  $s\restriction N_b=t\restriction N_b$ for some $b\in G$. 
This gives that $s(H)=t(G)$ if $[s]_H=[t]_G$ and also that the converse implication holds when $(\pi,E,\sigma)$ is \'etal\'e.

Therefore whenever $(\pi, E, \sigma)$ is an \'etal\'e space, this map is an open continuous bijection, hence an homeomorphism, between
$\Lambda^1_{\Gamma^{0,\bool{B}}(\pi,E,\sigma)}$ and
$E$.
\item[\ref{lem:Jsepbindles1-3}]
Clearly $\Xi^{\bool{B}}$ is a natural transformation $\Id_{\Presh(\bool{B})}\to \Gamma^{0,\bool{B}}\circ\Lambda^{\bool{B}}$ and the universal property of $\Xi^{\bool{B}}$ given by Proposition \ref{prop:univJsep}, combined with the first item, tells that $\Xi^{\bool{B}}$ is the unit of an adjunction in which the counit is the natural transformation 
$\Gamma^{0,\bool{B}}\circ\Lambda^{\bool{B}}\to\Id_{\bundle(\bool{B})}$ described in item \ref{lem:Jsepbindles1-2}.
\item[\ref{lem:Jsepbindles1-4}]
This is immediate by Proposition \ref{prop:univJsep} and the previous items.

\end{description}

\end{proof}

\subsection{The sheafification operator in terms of gluing completions of \'etal\'e spaces}

We can now address the study of the equivalence between sheaves for the sup topology on $\bool{B}$ and the bundles in $\exetale{\bool{B}}$, by taking into account 
Definitions \ref{def:functxi}, \ref{def:gammapi}, and \ref{def:LambdaB}. 

%
%


To do so, we need to slightly modify the categories of bundles we are dealing with, in particular we need to have a more generous notion of morphism:

\begin{remark}
The functor
$\Lambda^\bool{B}:\Presh(\bool{B})\to\bundle(\bool{B})$ has a unique adjoint, namely the sepaarification functor $\Gamma^{0, \bool{B}}:\exetale(\bool{B})\to\Presh(\bool{B})$. 
We will show that this adjoint  restricts to an equivalence of categories on $\Lambda^\bool{B}[\Sh_\mathrm{sup}(\bool{B})]$. 

Now $\Lambda^\bool{B}$ can also be seen as a functor with range in $\exetale(\bool{B})$ which is a full subcategory of $\bundle(\bool{B})$, hence it is natural to expect that 
the restriction of  $\Gamma^{0,\bool{B}}$ to this category should remain an adjoint of $\Lambda^\bool{B}$ seen as a functor with range contained in this restricted category.
If this were the case the adjunction on this restricted category would give natural isomorphisms
\[
\mathrm{Hom}_{\Presh(\bool{B})}(\F, \Gamma^0_{(\pi,E,\sigma)})\cong
\mathrm{Hom}_{\exetale(\bool{B})}(\Lambda_\F, (\pi,E,\sigma));
\]
however when $\Gamma^0_{(\pi,E,\sigma)}$ is not a sheaf, an arrow $\Phi:\F\to \Gamma^0_{(\pi,E,\sigma)}$ gives rise to a  continuous function defined on $\Lambda_\F$ and range in $\beta(E)$, but possibly not contained in $E$.
Such a function has only a dense open subset on which its range is contained in $E$; thus the request that the above is a natural isomorphism imposes to include such functions among the morphism on \'etal\'e bundles, as we do in Def. \ref{def:partmorphsimsExEtBpresh} below. It will then follw that the requested natural isomorphism 
\[
\mathrm{Hom}_{\Presh(\bool{B})}(\F, \Gamma^0(\pi,E,\sigma))\cong
\mathrm{Hom}_{\stexetale(\bool{B})}(\Lambda_\F, (\pi,E,\sigma)).
\]
exists for the category $\stexetale(\bool{B})$ with the same objects as $\exetale(\bool{B})$ and a wider family of morphisms (to be defined below).
\end{remark}

\begin{definition}\label{def:partmorphsimsExEtBpresh}
We define the category $\stexetale(\bool{B})$ in the following way:
\begin{itemize}
\item objects of $\stexetale(\bool{B})$ are exactly the same objects of $\exetale(\bool{B})$;
\item if $(\pi_1, E_1, \sigma_1), (\pi_2, E_2; \sigma_2)$ are objects in $\stexetale(\bool{B})$, a morphism between them is a continuous open partial map $\varphi:E_1\to E_2$ such that:
\begin{enumerate}
\item $\pi_1=\pi_2\circ\varphi$,
\item the domain of $\varphi$ is open dense in $E_1$.
\end{enumerate}
\item the composition of morphisms is the usual composition of partial functions;
\item the identity of an object $(\pi, E, \sigma)$ is the (total) identity $E\to E$.
\end{itemize}
\end{definition}
Notice that isomorphisms in  $\stexetale(\bool{B})$ and in $\exetale(\bool{B})$ are exactly the same functions.

We can safely think $\Lambda^{\bool{B}}$ as a functor $\Presh(\bool{B})\to\stexetale(\bool{B})$.

\begin{definition}
The functor $\Gamma^{1, \bool{B}}:\stexetale(\bool{B})\to\Presh(\bool{B})$ is defined in the following way:
\begin{description}
\item[on objects] it sends $(\pi, E, \sigma)$ to $\Gamma^{1}_{(\pi, E, \sigma)}$;
\item[on morphisms] it sends $\varphi:(\pi_1, E_1, \sigma_1)\to(\pi_2, E_2, \sigma_2)$ to the natural transformation $\{(\Gamma^1_\varphi)_b\}_{b\in\bool{B}}$ where, if $s\in\Gamma^1_{(\pi_1, E_1, \sigma_1)}(b)$, then
\[
(\Gamma^1_\varphi)_b(s):=\beta(\varphi)\circ s.
\]
\end{description}
\end{definition}
\begin{definition}\label{def:gluingcompletion}
Let $(\pi,E,\sigma)$ be a strongly separated bundle on an extremally disconnected compact Hausdorff space $(X,\tau)$. Its \emph{gluing completion} $(\pi^*,E^*,\sigma^*)$ is obtained by letting:
\begin{itemize}
\item 
$E^*$ consists of those points of $\beta(E)$ which are in the pointwise image of some Stone section $s:U\to \beta(E)$ for $(\pi,E,\sigma)$,
\item $\sigma^*$ is the topology inherited by $E^*$ as a subspace of $(\beta(E),\beta(\sigma))$,
\item $\pi^*$ is the restriction to $E^*$ of $\beta(\pi)$.
\end{itemize}
\end{definition}
For every object $(\pi, E, \sigma)$ in $\stexetale(\bool{B})$ we define a morphism $\epsilon_{(\pi, E, \sigma)}:(\Lambda^{\bool{B}}\circ\Gamma^{1,\bool{B}})(\pi, E, \sigma)\to(\pi, E, \sigma)$ by taking as domain the set
\[
D_E:=\{[s]_G: s:N_b\to\beta(E)\textit{ is a Stone section and }s(G)\in E\}
\]
and letting
\[
\epsilon_{(\pi, E, \sigma)}([s]_G):=s(G)\textit{ for every Stone section }s:N_b\to\beta(E).
\]
Notice that $D_E$ is open dense because the basic open sets of $\Lambda^1_{\Gamma^1_{(\pi, E, \sigma)}}$ are of the form $\dot{s}[N_b]$ for some $s\in\Gamma^1_{(\pi, E, \sigma)}(b)$ and for each $\dot{s}[N_b]$ we can find a $D\subseteq\downarrow b$ with $\bigvee D=b$ such that $s\restriction N_d$ has image in $E$ for every $d\in D$.
\footnote{Here we use that $\stexetale(\bool{B})$ has more morphisms than $\exetale(\bool{B})$: if $E$ is a strongly separated \'etal\'e bundle on $\St(\bool{B})$ and $s:\St(\bool{B}) \to\beta(E)$ is a Stone section on it, then $s^{-1}[E]$ might only be a dense open subset of $\St(\bool{B})$, hence in order to include such an $s$ among the legitimate morphisms of the category from the identity bundle on $\St(\bool{B})$ to $E$ , we are forced to impose that such morphisms  are defined only on a (possibly proper) dense open subset of $\St(\bool{B})$. The inclusion of such Stone sections $s$ among the legitimate morphism of the category is necessary in order to grant that the counit we have just introduced is well defined.}

\begin{proposition}\label{fact:imepsglucomp}
The morphism $\epsilon_{(\pi, E, \sigma)}$ is well defined. 
Moreover, $\epsilon_{(\pi, E, \sigma)}$ has a unique continuous extension $\Lambda^1_{\Gamma^1_{(\pi, E, \sigma)}}\to\beta(E)$ whose image is $E^*$. In particular, $\epsilon_{(\pi, E, \sigma)}$ induces an isomorphism between $(\Lambda^{\bool{B}}\circ\Gamma^{1,\bool{B}})(\pi, E, \sigma)$ and $(\pi^*,E^*,\sigma^*)$.
\end{proposition}
\begin{proof}
To prove that $\epsilon_{(\pi, E, \sigma)}$ is well defined we have to show that $D_E$ in dense and open.
To this end, observe that, if $s:N_b\to E$ a Stone section and $D_s$ is a dense open subset of $N_b$ such that $s\restriction D_s$ has range in $E$, then $\dot{s}[D_s]=D\cap \dot{s}[N_b]$ is open dense in $\dot{s}[N_b]$. Hence to conclude it is enough to notice that $D_E=\bigcup\{D_s: s\textit{ is a Stone section}\}$.

Moreover, $\epsilon_{(\pi, E, \sigma)}$ is continuous. Indeed, if $V\subseteq E$ is open, then 
\[
\epsilon_{(\pi, E, \sigma)}^{-1}[V]=\bigcup\bp{\dot{s}[s^{-1}[V]]: s\in\Gamma^1_{(\pi, E, \sigma)}(b)\textit{ for some }b\in\bool{B}}.
\] 
Furthermore, $\epsilon_{(\pi, E, \sigma)}$ is open: observe that $\epsilon_{(\pi, E, \sigma)}[\dot{s}[N_b]]=s[N_b]$, which is open since each section $s$ is open. 

The fact that $\pi\circ\epsilon_{(\pi, E, \sigma)}=\pi_{(\pi, E, \sigma)}$ is easy to check, and clearly $\epsilon_{(\pi, E, \sigma)}$ can be extended to the whole $\Lambda_{\Gamma^1_{(\pi, E, \sigma)}}$ by taking $\beta(\epsilon_{(\pi, E, \sigma)})\restriction \epsilon_{(\pi, E, \sigma)}$.

Finally, it is immediate from the definition that$\epsilon_{(\pi, E, \sigma)}$ induces an isomorphism between $(\Lambda^{\bool{B}}\circ\Gamma^{1,\bool{B}})(\pi, E, \sigma)$ and $(\pi^*,E^*,\sigma^*)$.
\end{proof}
\begin{corollary}
If $(\pi, E, \sigma)$ is an object in $\stexetale(\bool{B})$, then its gluing completion is complete for gluings.
\end{corollary}
\begin{proof}
Being $(\pi^*, E^*, \sigma^*)$ isomorphic to $(\Lambda^{\bool{B}}\circ\Gamma^{1,\bool{B}})(\pi, E, \sigma)$, its enough to note that (by Theorem \ref{mainthm:sheaf}) $\Gamma^{1,\bool{B}}(\pi, E, \sigma)$ is a sheaf and thus (by Lemma \ref{lem:charbundleissheaf}) $(\Lambda^{\bool{B}}\circ\Gamma^{1,\bool{B}})(\pi, E, \sigma)$ is complete for gluings.
\end{proof}
\begin{corollary}
If $(\pi, E, \sigma)$ is complete for gluings then $\epsilon_{(\pi, E, \sigma)}$ is an isomorphism, and thus its gluing completion is itself.
\end{corollary}
\begin{proof}
If $(\pi, E, \sigma)$ is complete for gluings then every Stone section $s:N_b\to\beta(E)$ has range in $E$, and thus $\epsilon_{(\pi, E, \sigma)}([s]_G)=s(G)\in E$ for every $G\in\St(\bool{B})$. Hence the domain of $\epsilon_{(\pi, E, \sigma)}$ is the whole $(\Lambda^{\bool{B}}\circ\Gamma^{1,\bool{B}})(\pi, E, \sigma)$ and so (by Fact \ref{fact:imepsglucomp}) we have that $E=E^*$.
\end{proof}
We sum up all these observations in the following result. 
\begin{theorem}
The pair $(\Lambda^{\bool{B}}, \Gamma^{1, \bool{B}})$ is an adjuction between $\Presh(\bool{B})$ and $\stexetale(\bool{B})$ with unit $\Xi^{\bool{B}}$ and counit $\{\epsilon_{(\pi, E, \sigma)}\}_{(\pi, E, \sigma)\in\stexetale(\bool{B})}$.\\
It specializes to an equivalence of categories between $\Sh_\mathrm{sup}(\bool{B})$ and the full subcategory of $\stexetale(\bool{B})$ generated by \'etal\'e spaces which are complete for gluings.
\end{theorem}
\begin{proof}
To prove it, we have to show that for every $\F\in\Presh(\bool{B})$ and $(\pi, E, \sigma)\in\stexetale(\bool{B}$ the following hold:
\[
\Id_{\Lambda^1_\F}=\epsilon_{(\pi_\F, \Lambda^1_\F, \sigma_\F)}\circ\Lambda^\bool{B}(\xi^\F)\quad\quad \Id_{\Gamma^1_{(\pi, E, \sigma)}}=\Gamma^{1, \bool{B}}(\epsilon_{(\pi, E, \sigma)})\circ\xi^{\Gamma^1_{(\pi, E, \sigma)}}.
\]
The proof ot these equalities is just a careful application of the definitions, and we leave it to the reader.
\end{proof}
We can now state our main result, proving the fact that $\Gamma^{1,\bool{B}}\circ\Lambda^{\bool{B}}$ is the sheafification functor for the Sup Grothendieck topology on $\bool{B}$.
\begin{theorem}\label{2mainthm:sheaf}
 Let $\bool{B}$ be a complete Boolean algebra. 
The composition $\Gamma^{1,\bool{B}}\circ\Lambda^{\bool{B}}$ is left adjoint to the inclusion of $\Sh_{\mathrm{sup}}(\bool{B})$ into $\Presh(\bool{B})$.
\end{theorem}
\begin{proof}
Let $\F\in\Presh(\bool{B})$ and $\G\in\Sh_{\mathrm{sup}}(\bool{B})$. 
We need to define a bijection
\[
\Phi_{\F, \G}:\Hom_{\Sh_{\mathrm{sup}}(\bool{B})}(\Gamma^{1,\bool{B}}\circ\Lambda^\bool{B}(\F), \G)\to\Hom_{\Presh(\bool{B})}(\F,\G)
\]
which is natural in $\F$ and $\G$. We define a natural transformation $\{\Phi_{\F, \G}\}_{\F\in\Presh(\bool{B}))}$ which is an isomorphism between the two functors
\[
\Hom_{\Sh_{\mathrm{sup}}(\bool{B})}(\Gamma^{1,\bool{B}}\circ\Lambda^{\bool{B}}(-), \G),\Hom_{\Presh(\bool{B})}(-, \G):\Presh(\bool{B})\to\dSet.
\]
Let $\eta=\{\eta_b\}_{b\in\bool{B}}\in\Hom_{\Sh_{\mathrm{sup}}(\bool{B})}(\Gamma^{1,\bool{B}}\circ\Lambda^{\bool{B}}(\F), \G)$ be a natural transformation.
We let $\Phi_{\F, \G}(\eta):=\{\Phi(\eta)_b\}_{b\in\bool{B}}$ be the natural transformation with components 
\[
\begin{aligned}
\Phi(\eta)_{b}:\F(b)&\to\G(b)\\
f&\mapsto\eta_b(\dot{f})
\end{aligned}.
\]
 The fact that $\Phi_{\F, \G}$ is well defined is left to the reader. To see that it is bijective, we provide an inverse
 \[
 \Psi_{\F, \G}:\Hom_{\Presh(\bool{B})}(\F, \G)\to\Hom_{\Sh_{\mathrm{sup}}(\bool{B})}(\Gamma^{1,\bool{B}}\circ\Lambda^{\bool{B}}(\F), \G).
 \]
 Let $\delta=\{\delta_b\}_{b\in\bool{B}}\in\Hom_{\Presh(\bool{B})}(\F, \G)$. We define $\Psi_{\F, \G}(\delta)=\{\Psi(\delta)_b\}_{b\in\bool{B}}$ in the following way. Let $s\in\Gamma^1_{\Lambda^{\bool{B}}(\F)}(b)$. By construction, we can find a $D\subseteq\downarrow b$ with $\bigvee D=b$ and a family $\{f_d\in\F(d): d\in D\}$ such that  $s\restriction N_d=\dot{f}_d$. Being $\G$ a sheaf, let $g\in\G(b)$ be the collation of the matching family \[\{\delta_{d}(f_d)\in\G(d): d\in D\}.\] Then we let $\Psi(\delta)_{b}(s):=g$.
 
 The fact that $\Phi_{\F, \G}$ and $\Psi_{\F, \G}$ are one the inverse of the other is almost immediate. The naturality in $\F$ and $\G$ is a direct consequence of the fact that, if $\G\in\Sh_{\mathrm{sup}}(\bool{B})$, then $(\Gamma^{1,\bool{B}}\circ\Lambda^{\bool{B}})(\G)\cong\G$ via the expected natural transformation. 
\end{proof}

\subsection{Final remarks}

We can also obtain a natural notion of mixification for Boolean valued models:
\begin{corollary}\label{cor:mixbvm}
Let $\M$ be a $\bool{B}$-valued model for a complete boolean algebra $\bool{B}$. Then
\[\M^+:=(R\circ\Gamma^{1,\bool{B}}\circ\Lambda^{\bool{B}}\circ L)(\M)\] is a Boolean valued model with the mixing property and such that the map \[\Phi_\M:\sigma\mapsto (\Gamma^{1,\bool{B}}\circ \Lambda^{\bool{B}})_{1_{\bool{B}}}([\sigma]_{F_{1_\bool{B}}})\] coupled with the identity on $\bool{B}$ into 
 is an elementary embedding\footnote{Recall Definition \ref{def:elembvm}.} of $\M$ into a $\bool{B}$-valued model with the mixing property.
\end{corollary}
\begin{proof}
It is clear that $\M^+$ is a $\bool{B}$-valued model with the mixing property. We leave to the reader to check the remaining items. In essence the key point is that given a  family $\bp{\sigma_i:i\in I}$ such that 
\[
b=\Qp{\exists x \phi(x)}^{\M}_{\bool{B}}=\bigvee_{i\in I}\Qp{\phi(\sigma_i)}^{\M}_{\bool{B}},
\] 
one can find $J\subseteq I$ and a maximal antichain $A=\bp{a_j:j\in J}$ below $b$ such that:
\begin{itemize} 
\item
$\Qp{\phi(\sigma_j)}^{\M}_{\bool{B}}\geq a_j$ for all $j\in J$;
\item
$\bp{[\sigma_j]_{F_{a_j}}:j\in J}$ is a collating family in
$L(\M)$;
\item the collation $\sigma^*\restriction N_b$ of this family is obtained from a Stone section $\sigma^*$ of  
$(\Lambda^{\bool{B}}\circ L)(\M)$.
\end{itemize}
One has then to check that 
\[
N_b=\Qp{\exists x\phi(x)}^{\M^+}_{\bool{B}}=\Qp{\phi(\sigma^*)}^{\M^+}_{\bool{B}}=\bigvee_{\bool{B}} 
\bp{N_{\Qp{\phi(\sigma_j)}^{\M}_{\bool{B}}}:j\in J}
\]
 holds in $\M^+$.
\end{proof}
In particular $R\circ\Gamma^{1,\bool{B}}\circ\Lambda^{\bool{B}}\circ L$ provides the canonical ``mixification'' of a Boolean valued model, canonical in the sense that it preserves the Boolean valued semantics and it embeds  in any other mixing $\bool{B}$-valued model into which  $\M^+$ embeds (since $\F_{\M^+}$ is the sheafification of $\F_\M$ and sheaves correspond to mixing models).

\begin{example}\label{exm:sheafLR}
Let us now outline why the $\bool{MALG}$-names for real numbers correspond to the sheafification of the presheaf 
$\F$ assigning to each $[A]\in\bool{MALG}$ the space $L^\infty(A)$, for $A$ a measurable subset of $\mathbb{R}$.\\
It is not hard to check that the Boolean valued model 
\[
\M=\bp{\tau\in V^\bool{MALG}: \Qp{\tau\in\mathbb{R}}_\bool{MALG}=1_\bool{MALG}}
\]
is a Boolean valued model with the mixing property (for details see \cite{VIAVAC15}). 
Then $L(\M)$ is a sheaf. 
Let also $\beta_0(\mathbb{R})=\mathbb{R}\cup\{\infty\}$ be the one point compactification of $\mathbb{R}$ and let $X=\St(\malg)$.
By the results of \cite{MR779056,MR759416,VIAVAC15} 
\[
L(\M)(1_\malg)\cong C^+(X)=\bp{f:X\to\beta(\mathbb{R}): \, f\text{ is continuous and }f^{-1}[\beta(\mathbb{R})\setminus\mathbb{R}]\text{ is nowhere dense}}.
\]
It is also possible to see that the Gelfand transform of $L^\infty(\mathbb{R})$ extends to a natural isomorphism 
\[
C^+(X)\cong L^{\infty+}(\mathbb{R})=\bp{f:\mathbb{R}\to\beta_0(\mathbb{R}):\,f\text{ is measurable and } \nu(f^{-1}[\bp{\infty}])=0},
\]
where $\nu$ is Lebesgue measure (for details see  \cite{VIAVAC15}, natural here means that it is a $\bool{MALG}$-equivalence according to Definition \ref{def:boolvalmorph} when for $f,g\in C^+(X)$ we set $\Qp{f=g}=\Reg{\bp{G\in X: f(G)=g(G)}}$ and for 
$f^*,g^*\in L^{\infty+}(\mathbb{R})$ $\Qp{f^*=g^*}=[\bp{a\in \mathbb{R}: f^*(a)=g^*(a)}]_{\mathsf{Null}}$).\\
In particular this equivalence shows that for all $A\subseteq\mathbb{R}$ measurable
\[
L(\M)([A]_{\mathsf{Null}})\cong\bp{f:A\to\beta_0(\mathbb{R}): \, f\text{ is continuous and }f^{-1}[\bp{\infty}]\text{ is nowhere dense}}
\]
and that $C^+(X)$ is naturally identified with $\Gamma^1_{\Lambda^1_{\F_\M}}$.

Note that, since we let the functions of interest take values outside of $\mathbb{R}$ on a negligible set, it (can be shown that) it is irrelevant which particular Hausdorff compactification of $\mathbb{R}$ one takes to define $C^+(X)$ or $L^{\infty+}(\mathbb{R})$. However it is important that we choose the Stone Cech compactification of $\mathbb{R}$ to define $C^+(X)$ in order to have a smooth notion of morphism between the associated bundles of which these spaces give the global sections.
\end{example}

\bibliographystyle{plain}
	\bibliography{Biblio.bib}


\end{document}